\newcommand{\nc}{\newcommand}
\DeclareRobustCommand{\SkipTocEntry}[4]{}
\nc{\eg}{\mathfrak{e}}
\nc{\detb}{\det_{\Beta_\mg}}
\nc{\Slb}{\Sl_{\Beta}}
\nc{\Slbm}{\Sl_{\Beta_\mg}^H}
\nc{\slbm}{\slg_{\Beta_\mg}^H(n)}
\nc{\ggl}{\mathfrak{g}}
\nc{\ppm}{\mathfrak{p}}
\nc{\GG}{G}
\nc{\Gm}{\Gl(\mg)}
\nc{\Gg}{\Gl(\ggl)}
\nc{\glgg}{{\mathfrak{gl}(\mathfrak{g})}}
\nc{\GH}{\Gl^{\!H}}
\nc{\GHm}{\Gl^{\!H} (\mg)}
\nc{\GHmk}{\Gl^H(\mg_k)}
\nc{\gHm}{{\mathfrak{gl}^H(\mathfrak{m})}}
\nc{\OHm}{\Or^H(\mg)}
\nc{\sogHm}{ {\mathfrak{so}^H(\mathfrak{m})} }
\nc{\Vm}{V(\mg)}
\nc{\Vg}{V(\ggl)}
\nc{\Vmi}{V(\mg_\infty)}
\nc{\Om}{\Or(\mg)}
\nc{\Og}{\Or(\ggl)}
\nc{\Symm}{{\rm Sym}(\mg)}
\nc{\Symg}{{\rm Sym}(\ggl)}
\nc{\gm}{\mathfrak{gl}(\mg)}
\nc{\som}{\sog(\mg)}
\nc{\sogg}{{\mathfrak{so}(\mathfrak{g})}}
\nc{\hml}{{\mu^{\ggl}}}
\nc{\ml}{{\mu^{\ggl}_{\mg}}}
\nc{\Ol}{{\mathcal{O}_{\ggl}}}
\nc{\OlH}{{\mathcal{O}^H_{\ggl}}}
\nc{\Vn}{V(n)}
\nc{\SymV}{{\rm Sym}(V)}
\nc{\mub}{{\bar \mu}}
\nc{\mumg}{{\mu_\mg}}
\nc{\Betam}{{\Beta_\mg}}
\nc{\Betag}{{\Beta_\ggo}}
\nc{\Betagp}{{\Beta_\ggo^+}}
\nc{\ii}{{\mathrm{i}}}
\nc{\Nrm}{{\mathrm{N}}}
\nc{\Srm}{{\mathrm{S}}}
\nc{\spec}{{\operatorname{sp}}}
\nc{\iR}{{\ii\RR}}
\nc{\Vs}{{V(\sg)}}
\nc{\Syms}{{\Sym(\sg)}}
\nc{\glgs}{{\glg(\sg)}}
\nc{\slgb}{{\slg_\Beta}}
\nc{\Vzerog}{V_{\Beta_\ggo^+}^{0}}
\nc{\Vnng}{V_{\Beta_\ggo^+}^{\geq 0}}
\nc{\Vnnssg}{U_{\Beta_\ggo^+}^{\geq 0}}
\nc{\Vzerossg}{U_{\Beta_\ggo^+}^{0}}
\nc{\musol}{{\mu_{\mathsf{sol}}}}
\nc{\Gl}{\mathsf{GL}} \nc{\Or}{\mathsf{O}}  \nc{\SO}{\mathsf{SO}}   \nc{\Sl}{\mathsf{SL}}  
\nc{\G}{\mathsf{G}} \nc{\K}{\mathsf{K}}  \nc{\T}{\mathsf{T}} \nc{\Lsf}{\mathsf{L}}
\nc{\Qb}{\mathsf{Q}_\Beta} \nc{\Hb}{\mathsf{H}_\Beta} \nc{\Ub}{\mathsf{U}_\Beta} 
\nc{\Gb}{\mathsf{G}_\Beta} \nc{\Kb}{\mathsf{K}_\Beta} \nc{\Hh}{\mathsf{H}}
\nc{\PPP}{\mathsf{P}}  
\nc{\Ss}{\mathsf{S}}
\nc{\U}{\mathsf{U}}
\nc{\Gs}{{\Gl(\sg)}}  \nc{\Os}{{\Or(\sg)}}
\nc{\gsol}{{g_{\mathsf{sol}}}}
\nc{\bgsol}{{\bar g_{\mathsf{sol}}}}
\nc{\GGs}{S}
\nc{\ggs}{\mathfrak{s}}
 \nc{\ggo}{\mathfrak{g}}
 \nc{\ggob}{\overline{\mathfrak{g}}}
\nc{\lamg}{\Lambda^2\ggo^*\otimes\ggo}
\nc{\gkp}{(\ggo=\kg\oplus\pg,\ip)} \nc{\ukh}{(\ug=\kg\oplus\hg,\ip)}
\nc{\tgkp}{(\tilde{\ggo}=\kg\oplus\pg,\ip)}
\nc{\fg}{\mathfrak{f}}  \nc{\vg}{\mathfrak{v}} \nc{\wg}{\mathfrak{w}} \nc{\zg}{\mathfrak{z}} \nc{\ngo}{\mathfrak{n}} \nc{\kg}{\mathfrak{k}} \nc{\mg}{\mathfrak{m}} \nc{\bg}{\mathfrak{b}}  \nc{\sog}{\mathfrak{so}} \nc{\sug}{\mathfrak{su}} \nc{\spg}{\mathfrak{sp}} \nc{\slg}{\mathfrak{sl}} \nc{\glg}{\mathfrak{gl}} \nc{\cg}{\mathfrak{c}} \nc{\rg}{\mathfrak{r}}  \nc{\hg}{\mathfrak{h}} \nc{\tgo}{\mathfrak{t}} \nc{\ug}{\mathfrak{u}} \nc{\dg}{\mathfrak{d}} \nc{\ag}{\mathfrak{a}} \nc{\pg}{\mathfrak{p}} \nc{\sg}{\mathfrak{s}} \nc{\affg}{\mathfrak{aff}} \nc{\qg}{\mathfrak{q}}
\nc{\Xg}{\mathfrak{X}} \nc{\lgo}{\mathfrak{l}} \nc{\tg}{\mathfrak{t} }
\nc{\pca}{\mathcal{P}} \nc{\nca}{\mathcal{N}} \nc{\lca}{\mathcal{L}} \nc{\oca}{\mathcal{O}} \nc{\mca}{\mathcal{M}} \nc{\tca}{\mathcal{T}} \nc{\aca}{\mathcal{A}} \nc{\cca}{\mathcal{C}} \nc{\gca}{\mathcal{G}} \nc{\sca}{\mathcal{S}} \nc{\hca}{\mathcal{H}} \nc{\bca}{\mathcal{B}} \nc{\dca}{\mathcal{D}} \nc{\fca}{\mathcal{F}} \nc{\Qca}{\mathcal{Q}}
\nc{\dd}{{\rm d}}  \nc{\ddt}{\tfrac{{\rm d}}{{\rm d}t}}        \nc{\dds}{\tfrac{{\rm d}}{{\rm d}s}} 
\nc{\ddtbig}{\frac{{\rm d}}{{\rm d}t}}      \nc{\dpar}{\tfrac{\partial}{\partial t}}    
\nc{\im}{\mathtt{i}} \renewcommand{\Re}{{\rm Re}}   
\nc{\RR}{{\mathbb R}} \nc{\HH}{{\mathbb H}} \nc{\CC}{{\mathbb C}} \nc{\ZZ}{{\mathbb Z}}
\nc{\FF}{{\mathbb F}} \nc{\NN}{{\mathbb N}} \nc{\QQ}{{\mathbb Q}} \nc{\PP}{{\mathbb P}}
\nc{\KK}{{\mathbb K}}
\nc{\vs}{\vspace{.2cm}} \nc{\vsp}{\vspace{1cm}} 
\nc{\ip}{{\langle \,\cdot \,,\cdot \,\rangle }}
 \nc{\la}{\langle} \nc{\ra}{\rangle} \nc{\unm}{\tfrac{1}{2}}
\nc{\unc}{\tfrac{1}{4}} \nc{\und}{\tfrac{1}{16}} \nc{\no}{\vs\noindent}
\nc{\lam}{\Lambda^2(\RR^n)^*\otimes\RR^n} \nc{\tangz}{{\rm T}^{\rm Zar}}
\nc{\nor}{{\sf n}}  \nc{\mum}{/\!\!/} \nc{\kir}{/\!\!/\!\!/}
\nc{\Ri}{\tfrac{4\Ric_{\mu}}{||\mu||^2}} \nc{\ds}{\displaystyle}
\nc{\ben}{\begin{enumerate}} \nc{\een}{\end{enumerate}} \nc{\f}{\frac}
\nc{\lb}{[\cdot,\cdot]} \nc{\isn}{\tfrac{1}{||v||^2}}
\nc{\wt}{\widetilde}
\nc{\raw}{\rightarrow} \nc{\lraw}{\longrightarrow} \nc{\hqn}{\mathcal{H}_{q,n}}
\nc{\minimatrix}[4]{\left[\begin{smallmatrix} {#1} & {#2} \\ {#3} & {#4} \end{smallmatrix}\right]}
\nc{\twomatrix}[4]{\left[\begin{array}{cc} {#1} & {#2} \\ {#3} & {#4} \end{array} \right]}
\nc{\threematrix}[9]{\left[\begin{array}{ccc} {#1} & {#2} & {#3} \\ {#4} & {#5} & {#6}\\ {#7} & {#8} & {#9} \end{array} \right]}
\nc{\mut}{\tilde{\mu}} \nc{\mur}{{\mu_r}} \nc{\mutr}{{\tilde{\mu}_r}}
\nc{\alert}{\color{blue}}
\nc{\glgan}{\minimatrix{0}{0}{\star}{0}} \nc{\glgna}{\minimatrix{0}{\star}{0}{0}}  \nc{\glgnn}{\minimatrix{0}{0}{0}{\star}}  \nc{\glgaa}{\minimatrix{\star}{0}{0}{0}}
\nc{\Vaan}{{\left(\ag \wedge \ag\right)^* \otimes \ngo}} \nc{\Vann}{{\left(\ag \otimes \ngo \right)^* \otimes \ngo}} \nc{\Vnnn}{{\left(\ngo \wedge \ngo \right)^* \otimes \ngo}}
\nc{\ad}{\operatorname{ad}}  \nc{\Aut}{\operatorname{Aut}}   \nc{\Inn}{\operatorname{Inn}}   \nc{\Lie}{\operatorname{Lie}} \nc{\Ad}{\operatorname{Ad}} \nc{\Der}{\operatorname{Der}} \nc{\rad}{\operatorname{rad}} \nc{\kf}{\operatorname{B}}
\nc{\End}{\operatorname{End}} \nc{\rank}{\operatorname{rank}} \nc{\Ker}{\operatorname{Ker}} \nc{\tr}{\operatorname{tr}} \nc{\Sym}{\operatorname{Sym}} \nc{\diag}{\operatorname{diag}} \nc{\proy}{\operatorname{pr}} \nc{\Adj}{\operatorname{Adj}} \nc{\proj}{\operatorname{pr}} \nc{\Id}{{\operatorname{Id}}} \nc{\Span}{\operatorname{span}}
\nc{\Hess}{\operatorname{Hess}}  \nc{\dif}{\operatorname{d}} \nc{\sen}{\operatorname{sen}} \nc{\grad}{\operatorname{grad}} \nc{\Order}{\operatorname{O}} \nc{\divg}{\operatorname{div}}
\nc{\Iso}{\operatorname{Iso}} \nc{\Diff}{\operatorname{Diff}} \nc{\Rc}{\operatorname{Rc}} \nc{\Ricci}{\operatorname{Ric}}
\nc{\ric}{\operatorname{ric}} 
\nc{\Riem}{\operatorname{Rm}} \nc{\scal}{\operatorname{scal}} \nc{\scalm}{\operatorname{scal}^\star} \nc{\Riccim}{\operatorname{Ric}^{\star}} \nc{\tang}{\operatorname{T}} \nc{\vol}{\operatorname{vol}} \nc{\mcv}{\operatorname{H}} \nc{\inj}{\operatorname{inj}}
\nc{\isog}{\mathfrak{iso}}
\nc{\mm}{\operatorname{M}} \nc{\CH}{\operatorname{CH}} \nc{\Irr}{\operatorname{Irr}} \nc{\mcc}{\operatorname{mcc}} \nc{\Sb}{\mathcal{S}_\Beta} \nc{\mmm}{\operatorname{m}} 
\nc{\Symn}{{{\rm Sym}(n)}}
\nc{\Beta}{{\beta}}
\nc{\Alpha}{A}
\nc{\Vr}{V_{\Beta^+}^{r}}
\nc{\Vzero}{V_{\Beta^+}^{0}}
\nc{\Vnn}{V_{\Beta^+}^{\geq 0}}
\nc{\Vnnss}{U_{\Beta^+}^{\geq 0}}
\nc{\Vzeross}{U_{\Beta^+}^{0}}
\nc{\Vnnt}{V_{\tilde\Beta^+}^{\geq 0}}
\nc{\Betap}{ {\Beta + \Vert{\Beta}\Vert^2 \Id} }
\nc{\Ap}{ {A + \Vert{A}\Vert^2 \Id} }
\nc{\zero}{ {\backslash \{0\} } }
\nc{\normmm}{{\rm F}}
\nc{\ipp}{\la\,\cdot \,,\cdot\,\ra^*_g}
\nc{\ippk}{\la\,\cdot \,,\cdot\,\ra^*_{g_k}}
\nc{\ippi}{\la\,\cdot \,,\cdot\,\ra^*_{g_\infty}}
\nc{\ipnew}{\la \la \cdot , \cdot \ra\ra}
\nc{\der}{\mathfrak{der}}
\nc{\kfm}{\widetilde{\kf}} 
\nc{\KFm}{\widetilde{\mathcal{B}}}
\nc{\KF}{\mathcal{B}}
\nc{\II}{{\mathbb I}}
\nc{\spa}{\operatorname{span}}
\theoremstyle{plain}
\newtheorem{theorem}{Theorem}[section]
\newtheorem{proposition}[theorem]{Proposition}
\newtheorem{corollary}[theorem]{Corollary}
\newtheorem{lemma}[theorem]{Lemma}
\newtheorem{teointro}{Theorem}
\newtheorem{corintro}[teointro]{Corollary}
\theoremstyle{definition}
\newtheorem{definition}[theorem]{Definition}
\theoremstyle{remark}
\newtheorem{remark}[theorem]{Remark}
\begin{document}
\begin{titlepage}

\title{The Ricci flow on solvmanifolds of real type}

\author{Christoph B\"ohm}	
\address{University of M\"unster, Einsteinstra{\ss}e 62, 48149 M\"unster, Germany}
\email{cboehm@math.uni-muenster.de}

\author{Ramiro A.~ Lafuente} 
\address{University of M\"unster, Einsteinstra{\ss}e 62, 48149 M\"unster, Germany}
\email{lafuente@uni-muenster.de}

\thanks{The second author was supported by the Alexander von Humboldt Foundation.}

\begin{abstract}
We show that for any solvable Lie group of real type, 
any homogeneous Ricci flow solution converges in Cheeger-Gromov topology to a unique non-flat solvsoliton, which is independent of the initial left-invariant metric. As an application, we obtain results on the isometry groups of non-flat solvsoliton metrics and Einstein solvmanifolds.
\end{abstract}

\end{titlepage}


\maketitle
\setcounter{page}{1}
\setcounter{tocdepth}{0}

%


\section{Introduction}

A \emph{solvmanifold} is a simply-connected solvable Lie group $\Ss$
 endowed with a left-invariant Riemannian metric. 
 It is called of \emph{real type}, if $\Ss$ is non-abelian and if for each element of its Lie algebra the corresponding adjoint map is either
 nilpotent or  has an eigenvalue with non-zero real part.
Nilmanifolds are of real type, 
and so are --up to isometry-- 
all homogeneous manifolds with negative sectional curvature, see \cite{Hei74} and \cite{Jbl13c}, but flat solvmanifolds are  not  \cite{Mln}. More interestingly,
by the deep structure results  of \cite{Heber1998}, \cite{standard}  and \cite{solvsolitons},
examples of solvmanifolds of real type include all non-flat
Einstein solvmanifolds and \emph{solvsolitons}.  
Recall that a solvsoliton is a solvmanifold which
is also an expanding Ricci soliton, and whose corresponding Ricci flow evolution
is  driven by  diffeomorphisms which are Lie group automorphisms.
Thus, up to isometry, solvmanifolds of real type
contain all known examples of non-compact, non-flat homogeneous Ricci solitons.

Since the homogeneous Ricci flow solution starting at a solvmanifold exists for all positive times \cite{scalar},  any sequence of blow-downs subconverges to a 
homogeneous limit Ricci soliton \cite{BL17}.
Our first main result addresses the question of uniqueness of such limits:

\begin{teointro}\label{mainthm_uniq}
 On a sim\-ply-connec\-ted solvable Lie group $\Ss$ of real type,
any scalar-curvature-normalized homogeneous Ricci flow solution converges in Cheeger-Gromov topology to a non-flat solvsoliton  $\big(\bar \Ss, \bgsol\big)$, which does not 
 depend on the initial metric.
\end{teointro}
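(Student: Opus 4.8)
The plan is to pass from the Ricci flow to the \emph{bracket flow} on the space of Lie brackets over $\ggs=\Lie(\Ss)$, and then to use real geometric invariant theory — the Kirwan--Ness stratification of the bracket space, together with the description of solvsolitons as minimal vectors of the moment map — to identify the limit and to prove both its uniqueness and its independence of the initial metric.

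Fix an inner product on $\ggs$ and identify each left-invariant metric on $\Ss$ with a point of the single $\Gs$-orbit $\Gs\cdot\lambda\subset V:=\Lambda^2\ggs^*\otimes\ggs$, where $\lambda=\lb_\ggs$. Under this identification the scalar-curvature-normalized Ricci flow starting at $g_0$ becomes, up to a time reparametrization and the action of a curve of orthogonal gauge transformations, the normalized bracket flow $\ddt\mu=-\pi(\Ricci_\mu)\mu+c(\mu)\mu$, which on a sphere $\{\|\mu\|=\mathrm{const}\}$ is (up to a further reparametrization) the negative gradient flow of the moment-map functional $F(\mu):=\|m(\mu)\|^2$, where $m$ is the moment map for the $\Gs$-action; its rest points on orbits are exactly the solvsoliton brackets. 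Cheeger--Gromov convergence of $(\Ss,g(t))$ to a homogeneous limit is equivalent to convergence $\mu(t)\to\mu_\infty$ in $V$ after gauge. The real-type hypothesis is used precisely here: it forces the normalized bracket flow to remain in a compact subset of $V\setminus\{0\}$ — its nilpotent part cannot decay to zero, and the non-zero real parts of the eigenvalues of the adjoint maps control the semisimple part — so no flat ($\mu_\infty=0$) limit occurs.

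By the immortality of the homogeneous Ricci flow \cite{scalar} and the subconvergence theorem for immortal homogeneous solutions \cite{BL17}, combined with the structure theory for solvmanifolds of real type \cite{Heber1998,standard,solvsolitons} and $\mu_\infty\neq0$, every $\omega$-limit point of the normalized bracket flow is the bracket of a non-flat \emph{solvsoliton}, i.e.\ a critical point of $F$. Now the stratification enters: the orbit $\Gs\cdot\lambda$ lies in a single stratum $\Sb$, determined by a vector $\Beta$ (the asymptotic direction of $-\grad F$ along the flow), and inside $\overline{\Sb}$ there is a unique closed $\Gs$-orbit, whose minimal vectors form a single $\Os$-orbit; by the uniqueness theorem for solvsolitons \cite{solvsolitons} this $\Os$-orbit corresponds to a unique non-flat solvsoliton $(\bar\Ss,\bgsol)$, whose nilsoliton nilradical and diagonalizable part are precisely the data encoded by $\Sb$. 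A {\L}ojasiewicz-type inequality for $F$ near this critical set — equivalently the geodesic convexity of the associated Kempf--Ness functional along geodesics of the symmetric space $\Gs/\Os$ — then upgrades the subconvergence above to convergence of the \emph{whole} trajectory: $\mu(t)\to\musol$ up to the $\Os$-action, hence $(\Ss,g(t))\to(\bar\Ss,\bgsol)$ in Cheeger--Gromov topology. Finally, every left-invariant metric on $\Ss$ gives a bracket in the same orbit $\Gs\cdot\lambda$, hence in the same stratum $\Sb$, so $(\bar\Ss,\bgsol)$ does not depend on $g_0$.

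The main obstacle is the last part of the preceding step: promoting subconvergence to convergence of the entire trajectory to a \emph{single} limit, and matching that limit with the stratum-canonical solvsoliton. This is delicate because $\Gs$ is non-compact and because $\mu_\infty$ may live in a \emph{degenerate} Lie algebra $\overline{\ggs}$ obtained from $\ggs$ by a contraction; one must check that the solvsoliton structure and the real-type bounds persist in the limit, that all subsequential limits are mutually isometric (using \cite{solvsolitons} on $\overline{\ggs}$ and the uniqueness of the closed orbit in $\overline{\Sb}$), and that the gradient-like flow does not drift along the critical set — which is exactly where a {\L}ojasiewicz inequality, or an explicit convexity/monotonicity estimate adapted to the stratum $\Sb$, is indispensable.
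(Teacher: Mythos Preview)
Your outline captures the right high-level architecture --- pass to the bracket flow, use the real-type hypothesis to exclude flat limits, invoke the GIT stratification for uniqueness --- but there is a genuine gap at the core of the argument.

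You assert that the normalized bracket flow $\mu' = -\pi(\Ricci_\mu)\mu + c(\mu)\mu$ is, up to reparametrization, the negative gradient flow of $F(\mu) = \Vert m(\mu)\Vert^2$, and that its rest points are exactly the solvsoliton brackets. This is only true in the \emph{nilpotent} case. For a general solvable Lie algebra one has
\[
    \Ricci_\mu = \mm_\mu - \tfrac12 \kf_\mu - \tfrac12\big(\ad_\mu \mcv_\mu + (\ad_\mu \mcv_\mu)^t\big),
\]
so the bracket flow is \emph{not} the gradient flow of the moment-map energy, and solvsolitons (characterized by $\Riccim_\mu = c\,\Id + D$ with $D\in\Der$) are \emph{not} the same as critical points of $\Vert m\Vert^2$. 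Consequently your appeal to ``minimal vectors in the unique closed orbit in $\overline{\sca_\Beta}$'' does not directly yield uniqueness of the solvsoliton limit; a separate argument is needed to pass between the two notions. The paper does this explicitly: Lemma~\ref{lem_solvsolmmsol} shows how to modify a solvsoliton bracket by an element of $\Gl(\ag)$ (acting only on the abelian complement of the nilradical) to produce a genuine critical point of $\normmm$, and then Theorem~\ref{thm_GITuniq} applies. This is the content of Theorem~\ref{thm_uniqsolvsol}, and it is not automatic from GIT alone.

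A second, related issue: because the flow is not gradient, you cannot expect a \L ojasiewicz inequality for $\Vert m\Vert^2$ to control it. The paper instead constructs a bespoke Lyapunov function $F_\Beta(\mu) = v_\Beta(\mu)^2\cdot\scalm(\mu)$ (Theorem~\ref{thm_lyapunov}), whose monotonicity forces the quantity $\Vert\Riccim_\nu\Vert^2 - \langle\Riccim_\nu,\Beta\rangle$ to vanish in the limit, and then Proposition~\ref{prop_rigidity} identifies the $\omega$-limit set as solvsolitons in $\sca_\Beta$. Finally, note that the paper does \emph{not} prove pointwise convergence $\nu(t)\to\musol$ of the bracket, and no \L ojasiewicz-type argument is needed: it only shows the $\omega$-limit is contained in a single $\Or(\sg)$-orbit, which already suffices for Cheeger--Gromov convergence since brackets in one $\Or(\sg)$-orbit give isometric metrics. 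So the obstacle you flag in your last paragraph is simply bypassed.
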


The Lie group $\Ss$ is of course called of real type, if it satisfies the above condition.
Notice, that the limit transitive group $\bar \Ss$ may be non-isomorphic to $\Ss$, but must still be of real type:
see 
 Remark \ref{rem_solvsolreal} and Remark \ref{rem_realtype}.
Theorem \ref{mainthm_uniq} was known for $\dim \Ss = 3$ and partially for $\dim \Ss = 4$  \cite{Lot07}, for nilpotent Lie groups \cite{nilRF}, \cite{Jbl11}, and for unimodular, almost-abelian Lie groups \cite{Arroyo2013}.
In the compact case, such a uniqueness result does not hold in general,
since most compact Lie groups  admit non-isometric Einstein metrics.

A first immediate consequence of Theorem \ref{mainthm_uniq}  is

\begin{corintro}\label{maincor_globalstability}
Let $(\Ss,\gsol)$ be a non-flat solvsoliton. Then,  any scalar-curvature-normalized homogeneous Ricci flow solution on $\Ss$ converges in 
Cheeger-Gromov topology to $(\Ss,\gsol)$.
\end{corintro}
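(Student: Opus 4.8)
The plan is to deduce Corollary \ref{maincor_globalstability} directly from Theorem \ref{mainthm_uniq}. First I would observe that a simply-connected solvable Lie group $\Ss$ admitting a non-flat solvsoliton is necessarily of real type: this is part of the structure theory of solvsolitons (see the discussion around \cite{solvsolitons} and Remark \ref{rem_solvsolreal}). Hence Theorem \ref{mainthm_uniq} applies to $\Ss$, and it guarantees that \emph{every} scalar-curvature-normalized homogeneous Ricci flow solution on $\Ss$ converges in Cheeger-Gromov topology to one and the same non-flat solvsoliton $(\bar\Ss,\bgsol)$, independent of the chosen initial left-invariant metric.

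The key step is then to identify this universal limit by running the flow from the distinguished metric $\gsol$ itself. By definition of a solvsoliton, the (unnormalized) homogeneous Ricci flow $g(t)$ with $g(0)=\gsol$ has the form $g(t)=c(t)\,\varphi_t^*\gsol$ for a smooth positive function $c(t)$ and a one-parameter family of Lie group automorphisms $\varphi_t$ of $\Ss$; in particular each $g(t)$ is isometric to a rescaling of $\gsol$. Normalizing by scalar curvature removes the factor $c(t)$, so the scalar-curvature-normalized solution through $\gsol$ is, for every $t$, isometric to $(\Ss,\gsol)$. Thus this particular normalized solution is constant in Cheeger-Gromov topology and trivially converges to $(\Ss,\gsol)$. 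Combining the two observations, the uniqueness assertion of Theorem \ref{mainthm_uniq} forces $(\bar\Ss,\bgsol)=(\Ss,\gsol)$ up to isometry, and therefore every scalar-curvature-normalized homogeneous Ricci flow solution on $\Ss$ converges to $(\Ss,\gsol)$.

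The only point requiring genuine care — and the one I would cite rather than reprove — is that non-flat solvsolitons live on Lie groups of real type, so that the hypotheses of Theorem \ref{mainthm_uniq} are actually met; everything else is formal, using just the soliton identity for the Ricci flow together with the scale- and diffeomorphism-invariance built into Cheeger-Gromov convergence. In other words, Corollary \ref{maincor_globalstability} is exactly the special case of Theorem \ref{mainthm_uniq} obtained by recognizing that its distinguished limit, being independent of the initial metric, must coincide with any given non-flat solvsoliton on $\Ss$.
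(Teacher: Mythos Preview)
Your argument is correct and is precisely the intended one: the paper presents Corollary \ref{maincor_globalstability} as ``a first immediate consequence of Theorem \ref{mainthm_uniq}'' without further proof, and what you wrote is exactly the natural way to unpack that remark (real type via Remark \ref{rem_solvsolreal}, then identify the universal limit by starting the flow at $\gsol$).
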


We not only recover that solvsolitons on such $\Ss$ 
are unique up to isometry, 
proved in \cite{solvsolitons}, but show also 
that homogeneous Ricci solitons on a solvable Lie group of real type are pairwise
equivariantly isometric: see  Corollary \ref{cor_uniqsolvsolmetric}.

 
 Recall that along a homogeneous Ricci flow solution
  the full isometry group remains unchanged \cite{Kot}. 
However, this does not imply in general  that this group
  will be a subgroup of the full isometry group of the limit: see \cite[Example 1.6]{GJ15}. The main reason for this is that, in the non-Einstein solvsoliton case, 
we only have convergence in Cheeger-Gromov topology: see Remark \ref{rem:solnonCinfty}.
Still, Theorem \ref{mainthm_uniq}  yields

\begin{corintro}
For a non-flat solvsoliton $(\Ss,\gsol)$,
the dimension of its isometry group $\Iso(\Ss,\gsol)$  is maximal among all left-invariant metrics on $\Ss$.
\end{corintro}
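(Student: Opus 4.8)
The plan is to obtain this directly from Corollary \ref{maincor_globalstability}, together with two standard facts: that the full isometry group is unchanged along a homogeneous Ricci flow solution \cite{Kot}, and that the dimension of the isometry group cannot decrease when passing to a Cheeger--Gromov limit.

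First I would fix an arbitrary left-invariant metric $g_0$ on $\Ss$, let $(g(t))_{t\ge 0}$ be the homogeneous Ricci flow solution with $g(0)=g_0$, and denote by $\bar g(t)$ its scalar-curvature normalization. Since rescaling a metric does not change its isometries and, by \cite{Kot}, the isometry group is preserved along the flow, we have $\Iso(\Ss,\bar g(t))=\Iso(\Ss,g_0)$ for all $t\ge 0$; in particular $\dim\Iso(\Ss,\bar g(t))=\dim\Iso(\Ss,g_0)$ is constant in $t$. By Corollary \ref{maincor_globalstability}, $(\Ss,\bar g(t))$ converges in Cheeger--Gromov topology to $(\Ss,\gsol)$ as $t\to\infty$. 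Invoking the semicontinuity of the isometry-group dimension under such convergence then yields
\[
\dim\Iso(\Ss,\gsol)\ \ge\ \limsup_{t\to\infty}\dim\Iso(\Ss,\bar g(t))\ =\ \dim\Iso(\Ss,g_0),
\]
and since $g_0$ was an arbitrary left-invariant metric and $\gsol$ is itself one, this is precisely the asserted maximality. (Note that this is consistent with the fact that $\Iso(\Ss,g_0)$ need not sit inside $\Iso(\Ss,\gsol)$ as a subgroup, since here only dimensions are being compared.)

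The only step that is not purely formal is the displayed inequality, and that is where I would concentrate the work. I would deduce it from the classical compactness properties of Killing fields: a Killing field on a complete Riemannian manifold is determined by its $1$-jet at a point, and the second-order equation $\nabla^2 X=-\Riem(\cdot\,,X)\cdot$ it satisfies turns a bound on this $1$-jet, together with the uniform curvature bounds automatically available in a Cheeger--Gromov limit of homogeneous spaces, into $C^\infty_{\mathrm{loc}}$ bounds on $X$. Concretely, for a sequence $t_k\to\infty$ I would choose a $1$-jet-orthonormal basis of a $\dim\Iso(\Ss,g_0)$-dimensional subspace of $\isog(\Ss,\bar g(t_k))$, pull it back by the Cheeger--Gromov diffeomorphisms, and extract a $C^\infty_{\mathrm{loc}}$-convergent subsequence; the limit vector fields are Killing for $\gsol$, they remain linearly independent because their $1$-jets stay orthonormal in the limit, and the locally defined limits extend to all of $\Ss$ by completeness. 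This gives $\dim\Iso(\Ss,\gsol)\ge\dim\Iso(\Ss,g_0)$. Alternatively, one could simply cite an existing reference for this semicontinuity property, in which case the corollary becomes an immediate consequence of Corollary \ref{maincor_globalstability}.
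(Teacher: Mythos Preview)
Your proposal is correct and follows exactly the argument the paper intends: the paper does not write out a separate proof for this corollary, but the discussion immediately preceding it (invariance of the full isometry group along homogeneous Ricci flow \cite{Kot}, Cheeger--Gromov convergence from Theorem~\ref{mainthm_uniq}/Corollary~\ref{maincor_globalstability}, and the passage of Killing fields to the limit as in \cite[\S 6.2]{Heber1998} and \cite[\S 9]{BL17}) outlines precisely the steps you carry out. Your treatment of the semicontinuity step via $1$-jets and the Killing field equation is the standard one and is in fact more detailed than what the paper provides.
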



In the Einstein case, the convergence can be improved as follows:

\begin{teointro}\label{mainthm_Einstein}
Let $(\Ss,g_E)$ be a non-flat Einstein solvmanifold. Then,
any scalar-curvature-normalized homogeneous Ricci flow solution on $\Ss$ converges in $C^\infty$ topology to $\psi^* g_E$, for some $\psi \in \Aut(\Ss)$.
\end{teointro}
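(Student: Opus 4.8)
The plan is to pass to the bracket flow, show that in the Einstein case no degeneration of the underlying Lie algebra occurs along it, and then convert convergence of brackets in a fixed isomorphism class into $C^\infty$ convergence of the metrics. Fix a $g_0$-orthonormal basis of $\sg=\Lie(\Ss)$; this identifies the left-invariant metrics on $\Ss$ with the orbit $\Gl(\sg)\cdot\mu_0$ inside $V:=\Lambda^2\sg^*\otimes\sg$, where $\mu_0=[\cdot\,,\cdot\,]$. Up to a rescaling and a time reparametrization — harmless here because the limit is non-flat, so $\scal<0$ stays bounded away from $0$ along the normalized solution — the scalar-curvature-normalized homogeneous Ricci flow solution $g(t)$ corresponds to a normalized bracket flow solution $\mu(t)$, which coincides, up to tangential reparametrization, with the negative gradient flow of the moment map norm $\|m\|^2$ restricted to a sphere in $V$ (see \cite{nilRF}; cf.\ \cite{BL17}). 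In particular $\mu(t)\in\Gl(\sg)\cdot\mu_0$ for all $t$, and by the compactness results underlying Theorem~\ref{mainthm_uniq} the trajectory subconverges, as $t\to\infty$, to a critical point $\mu_\infty$ of $\|m\|^2$ — that is, a solvsoliton — lying in the orbit closure $\overline{\Gl(\sg)\cdot\mu_0}$.

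The heart of the argument is to show that $\mu_\infty$ in fact lies in the open orbit $\Gl(\sg)\cdot\mu_0$, so that no degeneration occurs; this is exactly where the Einstein hypothesis enters. An Einstein metric is trivially an expanding Ricci soliton with constant-scaling evolution, so $(\Ss,g_E)$ is itself a non-flat solvsoliton, and hence its bracket $\mu_E\in\Gl(\sg)\cdot\mu_0$ is a critical point of $\|m\|^2$, i.e.\ a minimal vector for the $\Gl(\sg)$-action on $V$. By the Kempf--Ness theorem in real geometric invariant theory, the presence of a minimal vector forces the orbit $\Gl(\sg)\cdot\mu_0$ to be closed; therefore $\overline{\Gl(\sg)\cdot\mu_0}=\Gl(\sg)\cdot\mu_0$ and $\mu_\infty$ is isomorphic to $\mu_0$ as a Lie algebra. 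Since, moreover, the minimal vectors of a closed orbit form a single $\Or(\sg)$-orbit up to scaling, $\mu_\infty$ agrees with $\mu_E$ up to an orthogonal change of basis and a rescaling. (Alternatively one may argue geometrically: by Corollary~\ref{maincor_globalstability} the Cheeger--Gromov limit of $g(t)$ is $(\Ss,g_E)$, so the solvmanifold determined by $\mu_\infty$ is isometric to $(\Ss,g_E)$; as this limit is again of real type, the rigidity of the transitive solvable group of a non-flat solvmanifold of real type — see Remark~\ref{rem_realtype} and \cite{Heber1998}, \cite{standard}, \cite{solvsolitons} — yields once more that $\mu_\infty\cong\mu_0$.)

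To conclude, it remains to upgrade subconvergence to convergence and return to metrics. Since $\mu_\infty$ is a critical point of the real-analytic functional $\|m\|^2$ lying on a compact critical orbit, a {\L}ojasiewicz-type estimate (as in \cite{nilRF}) shows that the entire trajectory $\mu(t)$ converges to $\mu_\infty$ in $V$. Convergence of brackets within a fixed isomorphism class is equivalent to $C^\infty$ convergence of the associated left-invariant metrics on $\Ss$ up to automorphisms; and since the limit metric $g_{\mu_\infty}$ is a solvsoliton on $\Ss$, it is isometric to $g_E$, hence equal to $\psi^*g_E$ for some $\psi\in\Aut(\Ss)$ by the uniqueness of solvsolitons on $\Ss$. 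This proves the theorem.

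The step I expect to be the main obstacle is precisely the non-degeneration of the Lie algebra along the flow. Without the Einstein assumption the bracket flow genuinely can converge to a solvsoliton supported on a proper contraction of $\Lie(\Ss)$ — this is exactly the mechanism behind the failure of $C^\infty$ convergence for nilmanifolds that are not Einstein nilradicals, and the reason why Theorem~\ref{mainthm_uniq} only asserts Cheeger--Gromov convergence in general. Ruling it out uses in an essential way that the orbit $\Gl(\sg)\cdot\mu_0$ carries a minimal vector, equivalently that the limiting metric is Einstein.
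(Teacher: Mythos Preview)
Two gaps stand in the way. The first is conceptual: an Einstein bracket on a non-nilpotent solvable group is \emph{not} a minimal vector for the $\Gl(\sg)$- or $\Sl(\sg)$-action. By \eqref{eqn_Ricmu} the Ricci endomorphism differs from the moment map term $\mm_\mu$ by the Killing form and mean curvature contributions, and the Killing form is nonzero precisely when the nilradical is proper; thus $\Ricci_\mu = c\,\Id_\sg$ says nothing simple about $\mmm(\mu)$. Concretely, the bracket of hyperbolic $3$-space has $0$ in the closure of its $\Sl_3(\RR)$-orbit, so that orbit is not closed and Kempf--Ness cannot be invoked. For the same reason, the normalized bracket flow on solvable groups is \emph{not} the gradient flow of $\normmm = \|\mmm\|^2$; that identification is specific to the nilpotent case. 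The paper's route is different: a solvsoliton bracket becomes a critical point of $\normmm$ only after an explicit modification on the abelian factor (Lemma~\ref{lem_solvsolmmsol}), and the relevant uniqueness statement is the stratified Theorem~\ref{thm_GITuniq}, not Kempf--Ness. Your alternative ``geometric'' argument comes closer, and indeed the limit \emph{does} lie in the orbit $\Gl(\sg)\cdot\mu_0$ --- but for the simple reason that $\mu_E$ lies there by hypothesis and Theorem~\ref{thm_uniqsolvsol} forces every solvsoliton in the stratum to be $\Or(\sg)$-conjugate to it.

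The second gap is the one that actually separates the Einstein case from the general one. Convergence $\mu(t)\to\mu_\infty$ inside the orbit yields at best $\psi_t^*\, g(t)\to g_\infty$ for \emph{time-dependent} $\psi_t\in\Aut(\Ss)$, since a bracket determines a left-invariant metric only modulo $\Aut(\Ss)$ (cf.~\eqref{eqn_metricsbrackets}); the theorem demands $g(t)\to\psi^* g_E$ for a \emph{fixed} $\psi$. This amounts to showing that the specific curve $h(t)\in\Gl(\sg)$ solving \eqref{eqn_ODEh} converges, not merely that $\mu(t)=h(t)\cdot\mu_0$ does. The paper obtains this by computing the linearization of the gauged bracket flow at $\mu_E$ (Section~\ref{sec_linear}) to get \emph{exponential} convergence of the brackets, which forces $\Ricci_{\nu(t)} + \|\Riccim_{\nu(t)}\|^2\,\Id_\sg \to 0$ exponentially fast; only then can the linear ODE for $h(t)$ be integrated to a finite limit. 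In the non-Einstein solvsoliton case this very quantity converges instead to a nonzero derivation and $h(t)$ diverges (Remark~\ref{rem:solnonCinfty}) --- so the step you glossed over is exactly where the Einstein hypothesis does its real work.
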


Notice that Theorem \ref{mainthm_Einstein} gives in particular a dynamical proof of the recent result of Gordon and Jablonski on the maximal symmetry of Einstein solvmanifolds \cite{GJ15}. 

There exist already in the literature several results on the stability of certain non-compact homogeneous
Einstein metrics and Ricci solitons, see for instance \cite{SSS11}, 
 \cite{Bam15}, \cite{JPW16}, \cite{WW16}. Even though more general, non-homogeneous variations are considered in these articles, none of them implies Theorem \ref{mainthm_Einstein}, since two different homogeneous metrics on $\RR^n$ are not within bounded distance to each other.

Our last main result provides a geometric characterization of solvmanifolds of real type, in terms of the behavior of homogeneous Ricci flow solutions. More precisely, recall that given a Cheeger-Gromov-convergent sequence $(M^n_k,g_k)_{k\in\NN}$ of homogeneous manifolds, each of which having an $N$-dimensional transitive group of isometries $\G_k$, there is a natural way of making sense of an $N$-dimensional limit group of isometries $\bar \G$, by taking limits of appropriately rescaled Killing fields 
(see \cite[$\S$9]{BL17}). The sequence is then called \emph{algebraically non-collapsed} if the action of $\bar \G$ is transitive on the limit space $(\bar M^n, \bar g)$, and collapsed otherwise. A homogeneous Ricci flow solution is algebraically non-collapsed if any convergent sequence of parabolic blow-downs has that property.

\begin{teointro}\label{mainthm_algnonc}
On a simply-connected, non-abelian, solvable Lie group $\Ss$, a homogeneous Ricci flow solution  is algebraically non-collapsed if and only if $\Ss$ is of real type.
\end{teointro}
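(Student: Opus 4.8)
The plan is to establish the two implications separately, using Theorem \ref{mainthm_uniq} for one of them and a direct analysis of the bracket flow for the other.

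Suppose first that $\Ss$ is of real type. By Theorem \ref{mainthm_uniq} the scalar-curvature-normalized flow converges in Cheeger--Gromov topology to a non-flat solvsoliton $(\bar\Ss,\bgsol)$, and by Remark \ref{rem_realtype} the group $\bar\Ss$ is again of real type; in particular $\bar\Ss$ is non-abelian and, being a simply-connected solvable Lie group with a left-invariant metric, it acts simply transitively on $\bar\Ss\cong\RR^n$ by isometries. The point is that this convergence is realized at the level of the bracket flow: the normalized brackets converge to the bracket $\mu_\infty$ of $\bar\ss:=\Lie(\bar\Ss)$, which has full dimension $n$. Hence the limits of the correspondingly rescaled left-invariant Killing fields of $\Ss$ span $\bar\ss$, so the $n$-dimensional limit group $\bar\G$ of \cite[$\S$9]{BL17} is $\bar\Ss$ itself, acting transitively on the limit. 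Since every convergent sequence of parabolic blow-downs of the flow is, up to scaling and automorphism, this same solvsoliton, the flow is algebraically non-collapsed.

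For the converse I would argue by contraposition. Assume $\Ss$ is non-abelian, solvable, and \emph{not} of real type, and fix any homogeneous Ricci flow solution on $\Ss$; the goal is to produce a convergent sequence of parabolic blow-downs which is algebraically collapsed. Write $\ss=\ag\oplus\ngo$ with $\ngo$ the nilradical; since $[\ss,\ss]\subseteq\ngo$, the failure of the real-type condition gives $X\in\ss$ whose adjoint has a nonzero purely imaginary eigenvalue, and after a standard reduction we may assume that $\ad(X)|_\ngo$ has a nonzero semisimple skew-symmetric component. Passing to the bracket flow $\mu(t)$ on the fixed metric vector space $(\ss,\ip)$ and extracting a convergent parabolic blow-down $\mu_k\to\mu_\infty$, the limit $(\bar M^n,\bar g)$ is a homogeneous expanding Ricci soliton by \cite{BL17}. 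If this sequence were algebraically non-collapsed, the limit group $\bar\G$ would act transitively with $\dim\bar\G=n$, so $(\bar M^n,\bar g)$ would be an $n$-dimensional solvsoliton, which by \cite{solvsolitons} is of real type. The crux is to rule this out: the skew-symmetric component of $\ad(X)|_\ngo$ should force every full-dimensional limit of the bracket flow emanating from $\mu_\ss$ to still carry a nonzero skew-symmetric derivation of its nilradical — hence not to be of real type — whereas this component can be eliminated along the flow only through a degeneration that collapses an entire subspace, i.e.\ precisely through algebraic collapse. The resulting contradiction shows that the chosen blow-down sequence, and therefore the flow, is algebraically collapsed.

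The main obstacle is exactly this last step: controlling the bracket flow on an arbitrary non-real-type solvmanifold well enough to exclude a non-collapsed limit, i.e.\ showing that the skew part of the adjoint action cannot be removed along the flow except by collapsing the directions on which it acts. I expect this to require: a normal form for $\ss$ separating the nilpotent, real-semisimple and compact-semisimple parts of the derivations induced by $\ag$ (a Chevalley/nilshadow-type decomposition); the monotonicity and the description of the limits of the relevant functional along the bracket flow from \cite{BL17}, used to locate $\mu_\infty$ in the orbit closure $\overline{\Gl(\ss)\cdot\mu_\ss}$; and a translation of the resulting degeneration into the precise notion of algebraic collapse of \cite[$\S$9]{BL17}, namely that some nonzero element of $\bar\ss$ acts as the zero field on $\bar M^n$. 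The real-type hypothesis enters precisely in guaranteeing that this normal form involves only symmetrizable (real) derivations, so that no such collapse is forced and, combined with Theorem \ref{mainthm_uniq}, the limit is a genuine $n$-dimensional solvsoliton.
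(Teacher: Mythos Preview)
Your argument for the direction ``$\Ss$ of real type $\Rightarrow$ algebraically non-collapsed'' is essentially the paper's: Theorem \ref{mainthm_uniq} gives bracket convergence of the normalized flow, and bounded brackets yield algebraic non-collapse via \cite[Cor.~9.13]{BL17}. You should be explicit that this last step is exactly the content of that corollary (equivalently, that $t\cdot\Vert\mu(t)\Vert^2$ stays bounded by \eqref{eqn_normconvergence} together with the Type-III estimate).

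The converse direction, however, has a genuine gap, and the strategy you propose is actually aimed at the wrong target. You want to derive a contradiction by showing that the limit bracket $\mu_\infty$ is \emph{not} of real type; but this is false in general. The limit solvsoliton \emph{is} of real type (Remark \ref{rem_solvsolreal}), regardless of whether $\mu^\sg$ is. Limits in $\overline{\Gl(\sg)\cdot\mu^\sg}$ need not preserve the isomorphism class, so there is no reason the purely imaginary eigenvalues of $\ad_\mu(X)$ should survive --- Remark \ref{rem_realtype} already exhibits $\mu^{\hg_3}\in\overline{\Gl_3(\RR)\cdot\mu^{\eg(2)}}$, a real-type bracket in the orbit closure of a non-real-type one. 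So the ``persistence of the skew part'' heuristic cannot work as stated.

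The paper argues the contrapositive in the opposite direction: assume the flow is algebraically non-collapsed and deduce $\Ss$ is of real type. The key analytic input is a chain of lemmas showing that non-collapsedness forces $t\cdot\Vert\mu(t)\Vert^2$ to be bounded above and below, and $t\cdot\Vert\Ricci_{\mu(t)}\Vert$ to be bounded below; from these one gets that the $\scalm$-normalized brackets $\nu(t)$ stay bounded and hence subconverge to a solvsoliton $\nu_{\mathsf{sol}}\in\sca_\Beta$. Now comes the step that replaces your ``persistence'' argument: within a fixed stratum $\sca_\Beta$ the rank (codimension of the nilradical) is constant, equal to $\dim\ker\Beta^+$, and by Lemma \ref{lem_realtype} real type is an \emph{open} condition among solvable brackets of fixed rank. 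Since $\nu_{\mathsf{sol}}$ is of real type, so is $\nu(s_k)$ for $k$ large; but $\nu(s_k)\in\Gl(\sg)\cdot\mu^\sg$, hence $\mu^\sg$ is of real type. The contradiction thus comes not from the limit failing to be of real type, but from the \emph{openness} of real type pulling the conclusion back along the sequence to $\mu^\sg$ itself.
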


A first consequence of Theorem \ref{mainthm_algnonc} is that for Ricci flow solutions on a simply-connected, non-abelian solvable Lie group which are \emph{not} of real type,  the dimension of the isometry group must  always jump in the limit: see Section \ref{sec_nocollapse}.
But even more importantly, studying  Ricci flow solutions on such Lie groups
would in general involve the understanding of algebraic collapse, 
which cannot be achieved by using the moving brackets framework only. 


We turn to the content of the paper and
the proofs of the above results.  In Section \ref{sec_real} we 
discuss algebraic properties of solvmanifolds of real type. In Section
\ref{sec_stratif} we recall the GIT stratification of the space of brackets
and state in Theorem \ref{thm_GITuniq} a uniqueness result
for critical points of the energy map. 
 This is the main ingredient in the proof Theorem \ref{thm_uniqsolvsol}, a uniqueness result for solvsoliton brackets lying in the intersection of the closure of an orbit and the stratum containing that orbit. Finally,
using the equivalence of the Ricci flow and the gauged bracket flow,
we show in Section \ref{sec_thmA} that on a solvmanifold of real type 
any scalar curvature normalized bracket flow converges to a unique solvsoliton bracket in the closure of the corresponding orbit. The proof uses essentially
a Lyapunov function  for the bracket flow, described in \cite{BL17}.
Then Theorem \ref{mainthm_uniq} follows immediately. The proof of Theorem \ref{mainthm_algnonc} is given in Section \ref{sec_nocollapse}. Finally, in Section \ref{sec_Einstein} we prove Theorem \ref{mainthm_Einstein},
using the computations of the linearization of the gauged bracket flow
at an Einstein bracket given in Section \ref{sec_linear}.

\section{Solvable Lie groups of real type}\label{sec_real}

In this section we discuss algebraic properties of  solvable Lie groups or real type.
Let  $(\sg, \mu)$ be the Lie algebra of a solvable Lie group $\Ss$. The Lie bracket
$\mu$ is a skew-symmetric bilinear map, i.e.~ an element of the vector space 
of \emph{brackets}
\begin{equation}\label{eqn_brackets}
	\Vs := \Lambda^2 \sg^* \otimes \sg,
\end{equation}
 that satisfies the Jacobi identity. We denote by
 $\ad_\mu(X)(Y) := \mu(X, Y) \in \sg$, $X,Y\in \sg$, the 
 corresponding adjoint maps.
For convenience, we also fix a scalar product $\ip$ on $\sg$.

\begin{definition}\label{def_realtype}
A non-abelian solvable Lie algebra $(\sg,\mu)$ is called
\begin{itemize}
  \item[(i)] of \emph{imaginary type}, if $\spec( \ad_\mu(X)) \subset i \, \RR$ for all $X\in \sg$;
  \item[(ii)] of \emph{real type}, if all its subalgebras of imaginary type are nilpotent.
\end{itemize}
A solvable Lie group is called of imaginary or real type if its Lie algebra is of that type.
\end{definition}

In the above definition,  $\spec(E)$ denotes
 the spectrum of an endomorphism $E$ of $\sg$. 
Notice that $\sg$ is of real type if and only if all for all  $X\in \sg$ the adjoint map
 $\ad_\mu(X)$ is either nilpotent or has an eigenvalue $\lambda \notin i \RR$.
Recall also that there exist non-abelian solvable Lie groups admitting flat left-in\-variant metrics. 
By \cite{Mln}, they are all of imaginary type and have abelian nilradical; we call the corresponding Lie brackets \emph{flat brackets}.

\begin{remark}\label{rem_solvsolreal}
In \cite{Jbl13c}, Lie algebras of real type are called \emph{almost completely solvable}.
Completely solvable Lie algebras, characterized by  $\spec ( \ad(X) ) \subset \RR$ for all $X\in \sg$, are of course of real type. Moreover, by \cite[Prop.8.4]{Jbl2015},
any solvable Lie group $\Ss$ admitting a solvsoliton metric is of real type.
\end{remark}

 Considering $\sg$ as a real vector space, there is a natural `change of basis' linear action of the group $\Gl(\sg)$ on $\Vs$, given by
\begin{equation}\label{eqn_Gsaction}
		\big(h \cdot \mu\big)(\cdot, \cdot) := h \mu (h^{-1} \, \cdot, h^{-1} \, \cdot), \qquad h\in \Gl(\sg), \quad \mu \in \Vs\,.
\end{equation}
The orbit $\Gl(\sg) \cdot \mu$ is precisely the set of brackets on $\sg$ such that the corresponding Lie algebra is isomorphic to $(\sg,\mu)$. Since 
by (\ref{eqn_Gsaction}) we have 
\begin{eqnarray}\label{eqn_adconj}
\ad_{h\cdot \mu} (X)= {h \ad_\mu(h^{-1}X)h^{-1}}\,,
\end{eqnarray}
the type of a Lie bracket is constant on each $\Gl(\sg)$-orbit.
Next,  for $\mu \in V(\sg)$, $X\in \sg$ we set
\begin{align*}
  \varphi(\mu,X) = \max \big\{ \, |\Re(\lambda) | : \lambda \in \spec( \ad_\mu(X) ) \big\}\,,
  \quad
  \psi(\mu,X) = \max \big\{ \, |\lambda | : \lambda \in \spec(\ad_\mu(X) ) \big\}\,.
\end{align*}

\begin{lemma}\label{lem_solveigenvaluesad}
Let $(\sg,\mu_0)$ be a solvable Lie algebra with nilradical $\ngo$, and let $\ag$ be the orthogonal complement
of $\ngo $ in $\sg$. If $\mu =h \cdot \mu_0$ for $h \in \Gl(\sg)$, then
for all $X\in \sg$ we have
\begin{equation*}
  \varphi(h \cdot \mu_0, X) =  \varphi(\mu_0, (h^{-1} X)_\ag )\quad \textrm{ and }\quad
   \psi(h \cdot \mu_0, X) =  \psi(\mu_0, (h^{-1} X)_\ag )\,,
 \end{equation*}
 where the $\ag$-subscript denotes orthogonal projection onto $\ag$.
Moreover, if $\ngo \neq \sg$ then $\mu_0$ is of real type if and only if $\sigma_\ag(\mu_0) := \min_{X\in \ag, \Vert X \Vert = 1}  \varphi (\mu_0,X) > 0$.
\end{lemma}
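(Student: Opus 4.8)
The plan is to deduce everything from one structural fact: the characteristic polynomial of $\ad_{\mu_0}(Y)$ depends only on the image of $Y$ in $\sg/\ngo$, equivalently only on $Y_\ag$.

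First I would dispose of the two displayed equalities. By (\ref{eqn_adconj}) the operators $\ad_{h\cdot\mu_0}(X)$ and $\ad_{\mu_0}(h^{-1}X)$ are conjugate, hence have the same spectrum and therefore the same $\varphi$ and $\psi$; this reduces both identities to the claim that $\spec(\ad_{\mu_0}(Y))=\spec(\ad_{\mu_0}(Y_\ag))$, with multiplicity, for every $Y\in\sg$, where $Y=Y_\ngo+Y_\ag$ is the orthogonal decomposition along $\sg=\ngo\oplus\ag$. To prove this I would complexify and invoke Lie's theorem to obtain a full flag $0=V_0\subset\dots\subset V_n=\sg_\CC$ invariant under $\ad_{\mu_0}(\sg)$; then $\ad_{\mu_0}(Z)$ is upper triangular in a basis adapted to the flag, with diagonal $\lambda_1(Z),\dots,\lambda_n(Z)$, which are exactly its eigenvalues, and each $\lambda_j$ is a linear functional. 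The key point is that every $\lambda_j$ vanishes on $\ngo$: each $Z\in\ngo$ acts nilpotently on all of $\sg$ (a lower central series argument, since $\ngo$ is a nilpotent ideal), hence nilpotently on each line $V_j/V_{j-1}$, so $\lambda_j(Z)=0$. Consequently $\lambda_j(Y)=\lambda_j(Y_\ag)$ for all $j$, which gives the claim and hence both identities.

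For the real-type criterion, assume $\ngo\neq\sg$, so $\ag\neq 0$. By the above (with $h=\Id$) one has $\varphi(\mu_0,Y)=\varphi(\mu_0,Y_\ag)$ for all $Y$; moreover $\varphi(\mu_0,\cdot)$ is continuous and positively $1$-homogeneous, and $Y\notin\ngo\iff Y_\ag\neq0$. If $\sigma_\ag(\mu_0)>0$, then for $Y\notin\ngo$ we get $\varphi(\mu_0,Y)\geq\|Y_\ag\|\,\sigma_\ag(\mu_0)>0$, so $\ad_{\mu_0}(Y)$ has an eigenvalue off $i\RR$, while for $Y\in\ngo$ the map $\ad_{\mu_0}(Y)$ is nilpotent; hence $\mu_0$ is of real type, by the characterization recalled after Definition \ref{def_realtype}. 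Conversely, if $\sigma_\ag(\mu_0)=0$, compactness of the unit sphere of $\ag$ yields a unit $X_0\in\ag$ with $\spec(\ad_{\mu_0}(X_0))\subset i\RR$; and $\ad_{\mu_0}(X_0)$ cannot be nilpotent, for otherwise $\ngo+\RR X_0$ would be an ideal of $\sg$ (as $[\sg,\sg]\subset\ngo$) all of whose adjoint maps are nilpotent — apply the flag argument to $(\ngo+\RR X_0)_\CC$, whose characters vanish on $\ngo$ and, by nilpotency of $\ad_{\mu_0}(X_0)$, on $X_0$ as well — and thus a nilpotent ideal strictly containing the nilradical, which is absurd. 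Hence $\ad_{\mu_0}(X_0)$ is neither nilpotent nor has an eigenvalue off $i\RR$, so $\mu_0$ is not of real type.

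The only real obstacle is the structural input underlying both parts: that the nilradical acts nilpotently on the whole Lie algebra (so that $\varphi$ and $\psi$ factor through $\sg/\ngo$), together with the partial converse that an $\ad$-nilpotent element lying outside $\ngo$ would enlarge the nilradical. Once these are in place, the remainder is Lie's theorem plus a compactness argument.
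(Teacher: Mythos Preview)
Your proposal is correct and follows essentially the same route as the paper: both reduce the displayed identities via \eqref{eqn_adconj} to the fact that $\spec(\ad_{\mu_0}(Y))$ depends only on $Y_\ag$, established by simultaneous upper-triangularization with $\ngo$ acting strictly upper-triangularly (the paper cites \cite[Theorem 3.7.3]{Varad84}, you invoke Lie's theorem directly), and both deduce the real-type criterion from continuity and compactness. Your argument is simply more explicit in one place: where the paper asserts without proof that $\ad_{\mu_0}(X)$ is not nilpotent for $X\in\ag\setminus\{0\}$, you supply the standard justification (otherwise $\ngo+\RR X_0$ would be a nilpotent ideal strictly containing the nilradical).
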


\begin{proof}
By \eqref{eqn_adconj} we have  $\varphi(h \cdot \mu_0, X)=
\varphi(\mu_0,h^{-1}\cdot X)$ and we obtain the first two claims, since 
 for any $X\in \sg$ and  $Y\in \ngo$ the adjoint maps 
 $\ad_{\mu_0}(X+Y)$ and $\ad_{\mu_0}(X)$ share the same eigenvalues.
 This follows from \cite[Theorem 3.7.3]{Varad84}, since there is a basis for the complexified 
 Lie algebra $\sg^\CC$ such that all the adjoint maps are upper triangular, and strictly upper
  triangular for $Y\in \ngo$.
 
 Regarding the second claim, $\varphi$ is continuous in
 $X$ and $\mu$, because the eigenvalues of linear maps vary continuously. Since for $X \in \ag \setminus \{ 0 \}$ the endomorphism $\ad_{\mu_0}(X)$ is not nilpotent, it immediately follows that $\mu_0$ is not of real type if and only if 
 $\sigma_\ag(\mu_0) = 0$.  
\end{proof}

\begin{lemma}\label{lem_realnonflat}
Let $\mu_0\in V(\sg)$ be a solvable Lie bracket of real type. Then, there are no non-zero flat brackets in $\overline{\Gl(\sg) \cdot \mu_0}$.
\end{lemma}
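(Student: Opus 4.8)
The plan is to argue by contradiction: suppose $\mu_1 \in \overline{\Gl(\sg)\cdot\mu_0}$ is a non-zero flat bracket. By Milnor's structure theorem for flat solvmanifolds (invoked already in Section \ref{sec_real}), a flat bracket is of imaginary type and has abelian nilradical; in particular $(\sg,\mu_1)$ is a non-abelian solvable Lie algebra of imaginary type. The key point is that being of imaginary type is characterized by the vanishing of the continuous function $\varphi$: namely $\mu_1$ is of imaginary type precisely when $\varphi(\mu_1,X) = 0$ for all $X\in\sg$. So I would first extract a contradiction from the tension between $\mu_0$ being of real type (so $\sigma_\ag(\mu_0)>0$ by Lemma \ref{lem_solveigenvaluesad}, assuming $\sg$ is non-nilpotent) and $\mu_1$ being of imaginary type, using that $\mu_1$ is a limit of brackets $h_k\cdot\mu_0$.

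The main obstacle is that $\varphi$ is not invariant along the $\Gl(\sg)$-orbit — the formula in Lemma \ref{lem_solveigenvaluesad} shows $\varphi(h\cdot\mu_0, X) = \varphi(\mu_0, (h^{-1}X)_\ag)$, so the projection of $h^{-1}X$ onto $\ag$ can be made small by choosing $h$ appropriately, which is exactly what could in principle happen in a degeneration. To handle this I would split into two cases according to whether $\sg$ is nilpotent. If $\mu_0$ is nilpotent, then every bracket in $\overline{\Gl(\sg)\cdot\mu_0}$ is nilpotent (nilpotency is a Zariski-closed condition, or: the nilpotency degree does not increase under degeneration), hence $\mu_1$ is nilpotent; but a non-zero nilpotent bracket has non-abelian derived flag unless it is the Heisenberg-type... more simply, a nilpotent Lie algebra with abelian nilradical equal to all of $\sg$ is abelian, contradicting $\mu_1\neq 0$ being a non-abelian flat bracket (flat non-abelian forces a non-trivial semisimple part in the adjoint action, incompatible with nilpotency). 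So the nilpotent case is immediate.

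In the non-nilpotent case, I would use that the nilradical behaves semicontinuously under degeneration: $\dim\ngo(\mu_1) \geq \dim\ngo(\mu_0)$, so $\ngo(\mu_1)\neq\sg$ as well, and write $\ag_1$ for a complement. Here is where the argument must be made carefully. For $h\in\Gl(\sg)$ one has $\sigma_{\ag}(h\cdot\mu_0)$ obtained by minimizing $\varphi(\mu_0,(h^{-1}X)_\ag)$ over unit $X$ in the nilradical-complement of $h\cdot\mu_0$, which by $\ad$-conjugation equivariance equals $h\ag$ up to the nilradical; the upshot is that $\sigma(\mu) := \min\{\varphi(\mu,X) : X\notin\ngo(\mu), \|X\|=1\}$ depends only on the isomorphism class when normalized correctly, OR — cleaner — I would observe that for any $X\notin\ngo(\mu_0)$ the eigenvalues of $\ad_{\mu_0}(X)$ include one off the imaginary axis, and this is a closed orbit-type statement: the set $\{\mu\in\Vs : \mu \text{ has a subalgebra of imaginary type that is non-nilpotent}\}$ is $\Gl(\sg)$-invariant, and I would show it is in fact closed inside the "solvable brackets" locus by exhibiting the offending non-nilpotent imaginary subalgebra of $\mu_1$ as a limit of subalgebras of $h_k\cdot\mu_0$ and arguing these must themselves be of imaginary type (since the one-dimensional extension generated by any $X\in\ag_1$ inside $(\sg,\mu_1)$ has purely imaginary adjoint spectrum, and spectra vary continuously, so the corresponding subalgebras of $h_k\cdot\mu_0$ have adjoint spectra converging into $i\RR$ — but by Lemma \ref{lem_solveigenvaluesad} the function $\varphi$ evaluated there equals $\varphi(\mu_0,\cdot)$ composed with projection to $\ag$, and I then need to rule out that these projections all shrink to zero, which follows because $\ag_1$ is transverse to $\ngo(\mu_1)\supseteq\lim\ngo(h_k\cdot\mu_0)$). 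Assembling this transversality bookkeeping is the technical heart; once it is in place, the contradiction with $\sigma_\ag(\mu_0)>0$ from Lemma \ref{lem_solveigenvaluesad} is immediate.
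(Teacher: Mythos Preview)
You correctly identify the central difficulty: along a degeneration $h_k\cdot\mu_0 \to \mu_1$, the projection $(h_k^{-1}X)_\ag$ can shrink to zero even though $\sigma_\ag(\mu_0) > 0$, because $\varphi$ is only degree-one homogeneous in that argument. Your proposed transversality fix, however, does not close this gap. Granting that $h_k\ngo$ subconverges in the Grassmannian to a subspace of $\ngo(\mu_1)$, transversality of $X \in \ag_1$ to $\ngo(\mu_1)$ yields a uniform lower bound on $d(X, h_k\ngo)$. But the quantity you actually need to bound below is $\|(h_k^{-1}X)_\ag\| = d(h_k^{-1}X, \ngo)$, and these two distances are related by the linear map $h_k^{-1}$, whose operator norm is entirely uncontrolled. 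One can easily have $d(X, h_k\ngo)$ bounded away from zero while $d(h_k^{-1}X, \ngo) \to 0$: for instance in $\RR^2$ with $\ngo = \RR e_2$, take $h_k = \left(\begin{smallmatrix} k & 1 \\ 0 & 1 \end{smallmatrix}\right)$ and $X = e_1$. So the ``transversality bookkeeping'' you flag as the technical heart cannot be assembled as described; this is a genuine obstruction, not a missing detail.

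The paper's proof sidesteps the issue entirely by bringing in the spectral radius $\psi$ alongside $\varphi$. The point is that $\varphi$ and $\psi$ transform \emph{identically} under Lemma~\ref{lem_solveigenvaluesad}, so the compactness inequality $\psi(\mu_0, Y) \leq C_{\mu_0}\,\varphi(\mu_0, Y)$ for $Y\in\ag$ (available because $\varphi(\mu_0,\cdot)>0$ on $\ag\setminus\{0\}$ by real type) transfers verbatim to
\[
	\psi(h_k\cdot\mu_0, X) \;=\; \psi(\mu_0,(h_k^{-1}X)_\ag) \;\leq\; C_{\mu_0}\,\varphi(\mu_0,(h_k^{-1}X)_\ag) \;=\; C_{\mu_0}\,\varphi(h_k\cdot\mu_0, X)
\]
for every $X\in\sg$ and every $k$, with no control on $h_k$ needed. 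Since a flat limit $\bar\mu$ is of imaginary type, the right-hand side tends to zero, forcing $\psi(\bar\mu,\cdot)\equiv 0$; Engel's theorem then gives $\bar\mu$ nilpotent, and Milnor's theorem gives $\bar\mu = 0$. No case split into nilpotent versus non-nilpotent $\mu_0$ is required.
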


\begin{proof}
 Since $(\sg,\mu_0)$ is of real type, 
 $\varphi(\mu_0,Y) > 0$ for all $Y\in \ag\backslash\{0 \}$. Therefore, 
 using that the continuous
 maps $(\mu,X)\mapsto \varphi(\mu,X),\,\, \psi(\mu,X)$ are homogeneous of degree one in $X$, 
 we see that there exists a constant $C_{\mu_0}>0$ such that 
 $ \psi(\mu_0,Y) \leq C_{\mu_0} \cdot \varphi(\mu_0, Y)$
 for all $Y\in \ag$.

 Suppose now that $h_k\cdot \mu_0$ converges for $k \to \infty$
 to a flat bracket $\bar \mu \in V(\sg)$. Recall, that by
  \cite{Mln} the bracket $\bar \mu$ is of imaginary type, that is
 $\varphi(\bar \mu, \cdot ) \equiv 0$.
 For any fixed $X\in \sg$ 
 we then obtain by Lemma  \ref{lem_solveigenvaluesad}
\[
	\psi(h_k \cdot \mu_0, X) = \psi(\mu_0, (h_k^{-1} X)_\ag) \leq C_{\mu_0} \cdot \varphi(\mu_0, (h_k^{-1} X)_\ag) = C_{\mu_0} \cdot \varphi(h_k\cdot \mu_0, X) \underset{k\to\infty}\longrightarrow 0\,.
\]
 Therefore, $\psi(\bar \mu,  \cdot ) \equiv 0$ and $\bar \mu$ is nilpotent by Engel's theorem.
 Thus $\bar \mu = 0$ by \cite{Mln}.
\end{proof}

Recall, that the \emph{rank} of a solvable Lie algebra is the codimension of its nilradical.

\begin{lemma}\label{lem_realtype}
Among solvable Lie brackets of fixed rank, those of real type form an open set. 
\end{lemma}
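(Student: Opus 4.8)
The plan is to fix a solvable Lie bracket $\mu_0\in\Vs$ of rank $r$ and real type, and show that all nearby brackets of rank $r$ are again of real type. Let $\ngo$ be the nilradical of $\mu_0$ and $\ag$ its orthogonal complement, so $\dim\ag=r$. By Lemma \ref{lem_solveigenvaluesad}, being of real type is equivalent to $\sigma_\ag(\mu_0)=\min_{X\in\ag,\,\Vert X\Vert=1}\varphi(\mu_0,X)>0$, i.e.~to $\ad_{\mu_0}(X)$ having an eigenvalue off $\ii\RR$ for every unit $X\in\ag$. The delicate point is that the nilradical, and hence the complement $\ag$, can jump as $\mu$ varies; so one cannot simply quote continuity of $\varphi$ on the fixed subspace $\ag$. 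What I would do is the following: for a bracket $\mu$ near $\mu_0$ of rank $r$, let $\ngo_\mu$ be its nilradical and $\ag_\mu$ the orthogonal complement. First I would argue that $\ag_\mu$ depends continuously on $\mu$ (in the Grassmannian of $r$-planes) for $\mu$ in a neighborhood of $\mu_0$ among rank-$r$ brackets; equivalently, $\ngo_\mu$ varies continuously. This should follow from upper semicontinuity of the nilradical dimension together with the rank being pinned to $r$: the nilradical is the set of $X$ with $\ad_\mu(X)$ in the nilradical of the (solvable) image algebra, and for solvable Lie algebras $\ngo_\mu=\{X:\ad_\mu(X)\text{ nilpotent as a derivation of }[\mu,\mu]\text{-part}\}$; more robustly, one can use that $\ngo_\mu$ contains $[\sg,\sg]_\mu$ always, and the quotient is abelian of dimension $r$, and the splitting can be chosen to move continuously.

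Next, granting the continuous dependence of $\ag_\mu$, I would set up a family of linear isometries or at least invertible maps $p_\mu\in\Gl(\sg)$, depending continuously on $\mu$ with $p_{\mu_0}=\Id$, carrying $\ag$ to $\ag_\mu$ and $\ngo$ to $\ngo_\mu$; one may even take $p_\mu$ orthogonal. Then $\varphi(\mu,X)$ for unit $X\in\ag_\mu$ equals $\varphi(\mu,p_\mu Y)$ for the corresponding unit $Y\in\ag$, and the function $(\mu,Y)\mapsto\varphi(\mu,p_\mu Y)$ is jointly continuous on (neighborhood of $\mu_0$) $\times$ (unit sphere of $\ag$), since eigenvalues of $\ad_\mu(p_\mu Y)$ vary continuously in $(\mu,Y)$. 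The unit sphere of the fixed space $\ag$ is compact, so $\mu\mapsto\sigma_{\ag_\mu}(\mu)=\min_{Y\in\ag,\Vert Y\Vert=1}\varphi(\mu,p_\mu Y)$ is continuous near $\mu_0$. Since $\sigma_{\ag}(\mu_0)>0$, it stays positive on a neighborhood, and by Lemma \ref{lem_solveigenvaluesad} every such $\mu$ (of rank $r$, non-abelian since $r\geq 1$) is of real type.

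The main obstacle, as indicated, is the first step: controlling the nilradical under perturbation. The rank-$r$ hypothesis is essential here — without it the nilradical dimension can only jump up (it is upper semicontinuous), and in fact brackets of real type of \emph{arbitrary} rank do \emph{not} form an open set, since e.g.~one can degenerate a real-type bracket to an abelian (hence excluded) one, or perturb across rank. Fixing the rank rules out the semicontinuity jump and forces $\dim\ngo_\mu\equiv\dim\ngo$, which is what makes $\ngo_\mu$ (and the splitting) move continuously. If a fully clean continuity statement for $\ag_\mu$ is awkward to establish directly, an alternative is to work with the sum of generalized eigenspaces / the Fitting-type decomposition associated to the whole solvable algebra and argue semicontinuity of $\varphi$ there; but the cleanest route is the Grassmannian-continuity of the nilradical among fixed-rank solvable brackets, and then transplant everything to the fixed reference space $\ag$ via $p_\mu$ as above.
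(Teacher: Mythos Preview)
Your proposal is correct and follows essentially the same line as the paper's proof: both reduce to a fixed nilradical using the rank hypothesis, then invoke continuity of $\sigma_\ag$ from Lemma~\ref{lem_solveigenvaluesad}. The only difference is packaging: the paper argues by contraposition (a sequence of non--real-type brackets of rank $a$ converging to $\mu_0$), and instead of your continuous family $p_\mu$ it simply acts on each $\mu_k$ by an orthogonal map to force all nilradicals to equal the fixed $\ngo$ (type being $\Gl(\sg)$-invariant), then passes to a convergent subsequence---this sidesteps having to state Grassmannian continuity of $\ngo_\mu$ explicitly, though the underlying fact (upper semicontinuity of $\dim\ngo_\mu$ plus pinned rank) is the same one you identify.
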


\begin{proof}
If the rank is zero, the claim is obvious.
Let now $(\mu_k)_{k\in\NN} \in \Vs$ be a sequence of rank $a \in \NN$
 solvable Lie brackets which are not of real type, converging to $\mu_0$ as $k\to \infty$, also of rank $a$. After acting with suitable orthogonal maps on each $\mu_k$ and passing to a convergent subsequence, we may assume that the nilradicals of $\mu_k, \mu_0$ are a fixed subspace 
$\ngo \neq \sg$. The claim now follows by Lemma \ref{lem_solveigenvaluesad},
since $0=\lim_{k\to \infty}\sigma_{\ag}(\mu_k)=\sigma_{\ag}(\mu_0)$.
\end{proof}


\begin{remark}\label{rem_realtype}
There is only one $2$-dimensional non-abelian solvable Lie algebra, and it is of real type.  

Table \ref{s3} contains up to isomorphism all non-abelian, solvable Lie algebras of dimension $3$,  according to  \cite{ABDO}.
 After fixing an orthonormal basis $\{e_1, e_2, e_3\}$, they are described by 
  $(\ad e_1)|_\ngo  \in \glg_2(\RR)$, where $\ngo = \operatorname{span}_\RR\{e_2, e_3\}$ is an abelian ideal. 
Since $\mu^{\hg_3} \in \overline{\Gl_3(\RR)\cdot \mu^{\eg(2)}}$,
 real type is not an open condition in the space of brackets.  Here $\eg(2)$ denotes
the Lie algebra of rigid motions of the Euclidean plane and
$\hg_3$ the $3$-dimensional Heisenberg Lie algebra.

On $\Ss_3$,
the simply-connected solvable Lie group with Lie algebra  $\sg_3$,
any homogeneous Ricci flow solution
converges to the soliton on $\Ss_{3,1}$ in Cheeger-Gromov topology. This follows from Remark \ref{rmk_limitsoliton}, since the Lie brackets $\mu^{\sg_3}$
and $\mu^{\sg_{3,1}}$ lie in the same stratum, $\mu^{\sg_{3,1}}\in \overline{\Gl_3(\RR)\cdot \mu^{\sg_3}}$,
and  $S_{3,1}$ admits a solvsoliton (isometric to the hyperbolic $3$-space). 

In dimension $4$, solvable Lie algebras have rank at most $2$. The Lie algebra $\mathfrak{aff}(\CC)$ is the unique example of rank $2$ which is not of real type. From Lemma \ref{lem_realtype} and the fact that the rank is lower semi-continuous on the space of brackets, it follows that there is no sequence of brackets of real type converging to a bracket of type $\mathfrak{aff}(\CC)$. Thus, the set of solvable Lie brackets of real type is not dense in the space of all solvable Lie brackets.
\end{remark}

{\small
\begin{table}[h]
\[
\begin{array}{cccccc}
	  & (\ad{e_1})|_{\ngo} & \mbox{constraints}   & \mbox{real type} &  \mbox{flat}  \\ 
		\hline \\[-0.3cm]
	\hg_3  & \left[\begin{smallmatrix} 0&0 \\1 &0\end{smallmatrix}\right] &  - & \checkmark & - \\[0.1cm] 
	\sg_3  & \left[\begin{smallmatrix} 1& 0\\1 &1\end{smallmatrix}\right] &  - & \checkmark & - \\[0.1cm] 
	\sg_{3,\lambda} & \left[\begin{smallmatrix} 1&0 \\ 0 &\lambda\end{smallmatrix}\right] & -1\leq\lambda\leq 1 & \checkmark & - \\[0.1cm]  
	\sg_{3,\lambda}'  & \left[\begin{smallmatrix} \lambda&1\\ -1&\lambda\end{smallmatrix}\right] & 0<\lambda  & \checkmark & -
	\\[0.1cm] 
	\eg(2) & \left[\begin{smallmatrix}  0&1\\ -1& 0\end{smallmatrix}\right] & - & - & \checkmark
	\\[0.1cm] 
	\hline 
\end{array}
\]
\caption{$3$-dimensional solvable Lie algebras}\label{s3}
\end{table}}

\section{Stratification and uniqueness results for the moment map}\label{sec_stratif}

We review in this section the GIT stratification of the space of brackets $\Vs$ with respect to the linear action \eqref{eqn_Gsaction} of the real reductive Lie group $\Gs$. We also recall a uniqueness result for critical points of the moment map which will play a key role in the proof of Theorem \ref{mainthm_uniq}. The reader is referred to \cite{BL17}, \cite{GIT} for a more thorough presentation. 

Let us fix a Euclidean vector space  $(\sg, \ip)$. Denote also by $\ip$ the induced scalar products on $\glg(\sg) \simeq \sg^* \otimes \sg$ and $\Vs$.
If $\Or(\sg)$ denotes the orthogonal group of $(\sg, \ip)$, $\sog(\sg)$ its Lie algebra, and $\pg := \Sym(\sg,\ip)\subset \glg(\sg)$ the subspace of $\ip$-symmetric endomorphisms, then there is a Cartan decomposition 
\begin{equation}\label{eqn_Cartandec}
	\Gl(\sg) = \Os \exp(\pg), \qquad \glg(\sg) = \sog(\sg) \oplus \pg\,.
\end{equation}
The maximal compact subgroup $\Os$ of $\Gs$ acts orthogonally on $\Vs$ via \eqref{eqn_Gsaction}, and the elements in $\exp(\pg)$ act by symmetric maps. 
At the linear level, there is a corresponding $\glg(\sg)$-representation $\pi : \glg(\sg) \to \End(\Vs)$ given by
\begin{equation}\label{eqn_gsrep}
		\big(\pi(A) \mu \big) (\cdot, \cdot) := A \mu(\cdot,\cdot) - \mu(A\cdot, \cdot) - \mu(\cdot, A \cdot), \qquad A\in \glg(\sg), \quad \mu\in \Vs\,,
\end{equation}
which yields $\pi(\sog(\sg)) \subset \sog(\Vs)$, $\pi(\pg) \subset \Sym(\Vs,\ip)$. 
Thus, $\Gs$ is a \emph{real reductive Lie group} in the sense of \cite{GIT}, and one can study its linear action on $\Vs$ using real geometric invariant theory.

The \emph{moment map} $\mmm : \Vs \backslash \{0 \} \to \pg$ and its \emph{energy} $\normmm : \Vs\backslash \{ 0\} \to \RR$ are respectively defined by
\begin{equation}\label{eqn_defmmb}
   \la \mmm(\mu), \Alpha \ra 
    = \tfrac1{\Vert \mu\Vert^2} \cdot \la \pi(\Alpha) \mu, \mu\ra \,,  \qquad \normmm(\mu) = \Vert {\mmm(\mu)} \Vert^2,
\end{equation} 
for all $\Alpha\in \pg$, $\mu \in \Vs \backslash \{ 0\}$.
Notice that the moment map is $\Os$-equivariant: 
\begin{equation}\label{eqn_Kequivmm}
	\mmm(k \cdot \mu) = k \, \mmm(\mu) \, k^{-1}, \qquad k\in \Os, \quad \mu \in \Vs \backslash \{ 0\}.
\end{equation}
The energy $\normmm$ is therefore $\Os$-invariant. The following theorem shows how $\normmm$ determines a $\Gs$-invariant, ``Morse-type'' stratification of $\Vs \backslash \{0 \}$:

\begin{theorem}\label{thm_stratifb}\cite{GIT}
There exists a finite subset $\bca \subset \pg$ 
and a collection of smooth, $\Gs$-invariant submanifolds 
$\{ \sca_\Beta \}_{\Beta \in \bca}$ of $\Vs$, with the following properties:
\begin{itemize}
    \item[(i)]  
     We have $\Vs\backslash \{ 0\} = \bigcup_{\Beta\in \bca} \sca_\Beta$
     and $\sca_\Beta \cap \sca_{\Beta'}=\emptyset$ for $\Beta \neq \Beta'$.
    \item[(ii)] We have 
     $\overline{\sca_\Beta} \backslash \sca_\Beta \subset \bigcup_{\Beta'\in \bca, \Vert \Beta'\Vert > \Vert \Beta \Vert} \sca_{\Beta'}$ (the closure taken  in $\Vs \backslash \{0\}$).
    \item[(iii)] A bracket $\mu$ is contained in
       $\sca_\Beta$ if and only if the negative gradient 
             flow of $\normmm$ starting at $\mu$ 
             converges to a critical point $\mu_C$ of $\normmm$
             with $\mmm(\mu_C) \in \Os \cdot \Beta$. 
\end{itemize} 
\end{theorem}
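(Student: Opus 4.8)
\emph{Strategy and analytic setup.} The plan is to follow the Kirwan--Ness stratification theory in the real reductive version of \cite{GIT}. Since the numerator and denominator in \eqref{eqn_defmmb} are homogeneous of the same degree, $\mmm$ and hence $\normmm$ are invariant under scaling $\mu\mapsto c\mu$; in particular $\grad\normmm(\mu)$ is orthogonal to $\mu$, so the negative gradient flow of $\normmm$ on $\Vs\zero$ preserves spheres and it suffices to work on the compact unit sphere $S\subset\Vs$. As $\normmm$ is real-analytic, the {\L}ojasiewicz gradient inequality guarantees that the negative gradient flow of $\normmm|_S$ from any $\mu$ exists for all $t\geq 0$ and converges to a \emph{single} critical point $\mu_C$, along which $\normmm$ decreases to $\normmm(\mu_C)$. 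A short computation from \eqref{eqn_gsrep} and \eqref{eqn_defmmb} gives $\grad\normmm(\mu)=\tfrac{4}{\Vert\mu\Vert^2}\big(\pi(\mmm(\mu))\,\mu-\normmm(\mu)\,\mu\big)$, so $\mu_C$ satisfies $\pi(\mmm(\mu_C))\,\mu_C=\Vert\mmm(\mu_C)\Vert^2\,\mu_C$. Assigning to each $\mu$ the $\Os$-orbit of $\Beta(\mu):=\mmm(\mu_C)$ --- well defined by the equivariance \eqref{eqn_Kequivmm} --- and putting $\sca_\Beta:=\{\mu\in\Vs\zero:\Beta(\mu)\in\Os\cdot\Beta\}$, one already obtains the disjoint decomposition in (i) and property (iii). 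It then remains (a) to show that, modulo $\Os$, only finitely many $\Beta$ occur; (b) to show each $\sca_\Beta$ is a $\Gs$-invariant submanifold; and (c) to establish the closure relation (ii).

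\emph{Finiteness of $\bca$.} For (a) I would fix a maximal abelian subalgebra $\ag\subset\pg$ and decompose $\Vs=\bigoplus_{\alpha}\Vs_\alpha$ into weight spaces for the $\ag$-action via $\pi$, with finite weight set $\Delta$. By equivariance one conjugates a given critical point so that $\Beta:=\mmm(\mu_C)$ lies in the closed positive Weyl chamber of $\ag$. Writing $\mu_C=\sum_{\alpha\in\Delta'}(\mu_C)_\alpha$ with all displayed components nonzero, the definition of $\mmm$ shows $\Beta=\sum_{\alpha\in\Delta'}\tfrac{\Vert(\mu_C)_\alpha\Vert^2}{\Vert\mu_C\Vert^2}\,\alpha$ is a convex combination of the $\alpha\in\Delta'$, while the critical equation forces $\la\alpha,\Beta\ra=\Vert\Beta\Vert^2$ for each such $\alpha$; hence $\Beta$ is the point of the convex hull of $\Delta'$ nearest to the origin. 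Since $\Delta$ has finitely many subsets, only finitely many $\Beta$ arise, and $\bca$ is the finite set of their $\Os$-orbits.

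\emph{The strata as $\Gs$-saturations.} For (b) and (c) I would give the following description. For $\Beta\in\bca$ in the chamber, let $\Gb$ be the centralizer of $\exp(\RR\Beta)$ in $\Gs$, let $\Ub$ be the exponential of the sum of the positive $\ad(\Beta)$-eigenspaces in $\glgs$, and let $\Qb:=\Gb\Ub$ be the associated parabolic subgroup. Using the $\pi(\Beta)$-eigenspace decomposition of $\Vs$, let $V(\sg)^{\geq 0}_{\Beta^+}$ and $V(\sg)^{0}_{\Beta^+}$ be the sums of eigenspaces with eigenvalue $\geq\Vert\Beta\Vert^2$, respectively $=\Vert\Beta\Vert^2$; on the latter the reductive group $\Gb$ acts linearly, and, recentering its moment map at $\Beta$, let $U^{0}_{\Beta^+}\subset V(\sg)^{0}_{\Beta^+}$ be the open set of vectors whose $\Gb$-negative-gradient flow converges to a critical point of type $\Beta$. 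The key assertion is
\[
	\sca_\Beta=\Gs\cdot\big\{\mu\in V(\sg)^{\geq 0}_{\Beta^+}\ :\ \mu^{0}\in U^{0}_{\Beta^+}\big\},
\]
where $\mu^{0}$ is the $V(\sg)^{0}_{\Beta^+}$-component. For ``$\supseteq$'' one checks that the negative gradient flow of $\normmm$ started in $V(\sg)^{\geq 0}_{\Beta^+}$ stays there and, using convexity of the Kempf--Ness function along the geodesics $t\mapsto\exp(tA)\cdot\mu$, first contracts $\mu$ onto its lowest-weight component and then flows within $\Gb\cdot U^{0}_{\Beta^+}$ to a $\Beta$-type critical point; for ``$\subseteq$'' one uses the real Hilbert--Mumford/Kempf numerical criterion to move any $\mu\in\sca_\Beta$ by $\Gs$ into $V(\sg)^{\geq 0}_{\Beta^+}$ with semistable lowest component. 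From the right-hand side, $\Gs$-invariance is manifest, independence of the auxiliary choices follows from $\Gs=\Os\Qb$ and equivariance of $\mmm$, and smoothness follows because $\sca_\Beta$ fibers $\Gs$-equivariantly over $U^{0}_{\Beta^+}$ with fibre modeled on $\Ub$ times the higher weight spaces, which one makes rigorous via a slice theorem and the implicit function theorem. For (ii) one shows $\normmm\geq\Vert\Beta\Vert^2$ on $\overline{\sca_\Beta}$ with equality exactly at the $\Beta$-type critical points; if $\mu_k\in\sca_\Beta$ converge to $\mu\notin\sca_\Beta$, the flow from $\mu$ limits to a critical point of some type $\Beta'$ with $\Vert\Beta'\Vert^2\geq\normmm(\mu)\geq\Vert\Beta\Vert^2$, and equality would force $\mu\in\sca_\Beta$, so $\Vert\Beta'\Vert>\Vert\Beta\Vert$.

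\emph{Main obstacle.} The delicate point is the identification of $\sca_\Beta$ with the $\Gs$-saturation above, i.e.\ that the manifestly $\Gs$-invariant algebraic set coincides with the gradient-flow stratum. This is the real-reductive analogue of matching Kirwan's instability stratification with Ness's Morse stratification, and rests on the Kempf--Ness theorem, the Hilbert--Mumford numerical criterion for real reductive groups, and the convexity of the Kempf--Ness function along geodesics --- each requiring genuine work. By contrast, the analytic input ({\L}ojasiewicz convergence on the compact sphere $S$) and the convex-geometric finiteness in (a) are comparatively routine.
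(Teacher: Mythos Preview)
The paper does \emph{not} prove this theorem: it is quoted verbatim from the reference \cite{GIT}, and the surrounding section merely reviews the statement and the structure of the strata (culminating in \eqref{eqn_SbetaKU}) for later use. So there is no ``paper's own proof'' to compare against. Your outline is essentially the standard Kirwan--Ness argument in its real--reductive incarnation, which is precisely what \cite{GIT} does; in particular your description $\sca_\Beta=\Gs\cdot\{\mu\in \Vnn:\mu^0\in\Vzeross\}$ is exactly the content of \eqref{eqn_SbetaKU} (together with $\Gs=\Os\Qb$).

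There is, however, a genuine error in your sketch of (ii). You write that the flow from $\mu\in\overline{\sca_\Beta}\setminus\sca_\Beta$ reaches a critical point of type $\Beta'$ with $\Vert\Beta'\Vert^2\geq\normmm(\mu)$. This inequality points the wrong way: $\normmm$ is \emph{non-increasing} along the negative gradient flow, so the critical value satisfies $\Vert\Beta'\Vert^2\leq\normmm(\mu)$, and your chain of inequalities collapses. The correct argument is an upper--semicontinuity statement for the stratum label. Given $\mu\in\sca_\Beta$ and $\varepsilon>0$, choose $T$ with $\normmm(\phi_T(\mu))<\Vert\Beta\Vert^2+\varepsilon$; by continuity of the finite--time flow there is a neighbourhood $U\ni\mu$ with $\normmm(\phi_T(\mu'))<\Vert\Beta\Vert^2+\varepsilon$ for all $\mu'\in U$, hence $\Vert\Beta(\mu')\Vert^2<\Vert\Beta\Vert^2+\varepsilon$. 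Thus $\mu\mapsto\Vert\Beta(\mu)\Vert^2$ is upper semicontinuous, so for $\mu_k\in\sca_\Beta$ with $\mu_k\to\mu$ one gets $\Vert\Beta(\mu)\Vert^2\geq\limsup\Vert\Beta(\mu_k)\Vert^2=\Vert\Beta\Vert^2$; equality would (after using finiteness of $\bca$ and the gradient--flow definition of $\sca_\Beta$) force $\mu\in\sca_\Beta$. Everything else in your outline---the {\L}ojasiewicz argument on the sphere, the finiteness via minimal convex combinations of weights, and the identification of the Morse stratum with the Hesselink/Kempf instability stratum---is the right strategy, with the honest caveat (which you flag) that the last identification is where all the work in \cite{GIT} lies.
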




We now describe the strata in more detail. For $\Beta\in \pg$ we set 
\[
	\Beta^+ := \Beta + \Vert \Beta \Vert^2 \Id_\sg\,.
	\]
Denote by $\Vr \subset \Vs$ the eigenspace of $\pi(\Beta^+) = \pi(\Beta) - \Vert \Beta \Vert^2 \, \Id_{\Vs}$ corresponding to the eigenvalue $r\in \RR$ (recall that $\pi(\Id_\sg) = - \Id_\Vs$), and consider the subspace
\begin{equation}\label{eqn_defVnn}
   \Vnn := \bigoplus_{r \geq 0} \Vr\,.
\end{equation}
There exist subgroups of $\Gs$ adapted to these subspaces. In order to describe them, since 
the linear map $\ad(\Beta) : \glgs \to \glgs$, $A \mapsto [\beta, A]$ is  self-adjoint, 
we may decompose $\glg(\sg) = \bigoplus_{r\in \RR} \glg(\sg)_r$ as a sum of $\ad(\Beta)$-eigenspaces, and set accordingly
\begin{equation}\label{eqn_guqbeta}
    \ggo_\Beta := \glg(\sg)_0 = \ker (\ad(\Beta) ), \qquad \ug_\Beta := \bigoplus_{r> 0} \glg(\sg)_r, \qquad \qg_\Beta := \ggo_\Beta \oplus \ug_\Beta.
\end{equation}
We then denote by 
\[
    \Gb := \{ g \in \Gs : g \Beta g^{-1} = \Beta \}, \qquad \Ub := \exp(\ug_\Beta), \qquad \Qb := \Gb \Ub,
\] 
the centralizer of $\Beta$ in $\Gs$, the unipotent subgroup associated with $\Beta$, and the parabolic subgroup associated with $\Beta$, respectively. Set $\Kb := \Os \cap \Gb$ and consider also
\[
   \Hb := \Kb \, \exp(\pg \cap \hg_\Beta), \qquad  \hg_\Beta := \{ \Alpha \in \ggo_\Beta : \langle \Alpha,  \Beta\rangle = 0\}, \qquad
\]
a codimension-one reductive subgroup (resp.~ subalgebra) of $\Gb$ (resp.~ $\ggo_\Beta$).

The groups $\Gb$, $\Ub$ and $\Qb$ are closed in $\Gs$, and $\Ub$ is normal in $\Qb$. They satisfy  $\Gb \cdot \Vr \subset \Vr$ for all $r$, and $\Qb \cdot \Vnn \subset \Vnn$. The subgroup $\Gb$ is real reductive, with Cartan decomposition $\Gb = \Kb \exp(\pg_\Beta)$, $\pg_\Beta = \pg \cap \ggo_\Beta$, induced from that of $\Gs$. The same holds for $\Hb$, and in fact $\Gb = \exp(\RR \Beta) \times \Hb$ is a direct product. A key property of $\Qb$ is that it is large enough so that we have
\begin{equation}\label{eqn_GOQ}
	\Gs = \Os \Qb.
\end{equation}
For a critical point $\mu_C$ of $\normmm$ we set $\Beta := \mmm(\mu_C)$ and define
\[
 \Vzeross := \big\{ \mu \in \Vzero : 0\notin \overline{\Hb\cdot \mu} \big\}\,, \qquad  \Vnnss := p_\Beta^{-1} \big(\Vzeross\big),
\]
where $p_\Beta : \Vnn \to \Vzero$ denotes the orthogonal projection. The set $\Vzeross$ (resp.~ $\Vnnss$) is open and dense in $\Vzero$ (resp.~ in $\Vnn$).
The proof of Theorem \ref{thm_stratifb} in fact shows that
\begin{equation}
	\sca_\Beta =\Os \cdot \Vnnss. \label{eqn_SbetaKU}
\end{equation}
In particular, for any $\mu \in \sca_\Beta$ one can always find $k\in \Os$ such that $k \cdot \mu \in \Vnn$. 

\begin{definition}\label{def_gauged}
We say a bracket $\mu \in \Sb \subset \Vs$ is \emph{gauged correctly} 
w.r.t. $\Beta$, if $\mu \in \Vnnss$.
\end{definition}

Later on, we will fix a stratum label $\beta$, and then exploit \eqref{eqn_SbetaKU} to gauge everything, in order to work on the set $\Vnnss$ which is better adapted to $\Beta$.

Finally, we recall the following uniqueness result for critical points of $\normmm$. We think of it as a generalization of the Kempf-Ness theorem, which gives `uniqueness' of minimal vectors (zeros 
of the moment map).

\begin{theorem}\label{thm_GITuniq}\cite{GIT}
For $\mu \in \sca_\Beta$, the set of critical points of $\normmm$ contained in $\overline{\Gs \cdot \mu} \cap \sca_\Beta$ equals $\RR_{>0} \cdot \Os \cdot \mu_C$, where 
$\mu_C \in \sca_\Beta$ is a critical point of $\normmm$ with $\mmm(\mu_C) = \Beta$.
\end{theorem}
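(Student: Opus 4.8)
The plan is to reduce Theorem \ref{thm_GITuniq} to a statement about the real reductive group $\Hb$ acting on the slice $\Vzero$, where a Kempf-Ness-type uniqueness of zeros of a moment map is available, and then transport that statement back to the original action of $\Gs$ on $\Vs$ using the gauging machinery recorded in \eqref{eqn_GOQ}, \eqref{eqn_SbetaKU} and \eqref{eqn_SbetaKU}. First I would normalize: given $\mu\in\sca_\Beta$, by \eqref{eqn_SbetaKU} there is $k\in\Os$ with $k\cdot\mu\in\Vnnss$, so after replacing $\mu$ by $k\cdot\mu$ (which by \eqref{eqn_Kequivmm} only conjugates the moment map and hence does not affect the set of critical points up to $\Os$-action) we may assume $\mu$ is gauged correctly w.r.t. $\Beta$. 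Write $\mu=\mu_0+\mu_+$ with $\mu_0=p_\Beta(\mu)\in\Vzeross$ and $\mu_+\in\bigoplus_{r>0}\Vr$. The key structural input is that $\normmm$ restricted to $\Vnn$ has the property that $\Beta$ is the unique minimal value of $\mmm$ direction along the $\Beta$-axis, so any critical point $\mu_C$ of $\normmm$ lying in $\sca_\Beta$ with $\mmm(\mu_C)=\Beta$ in fact lies in $\Vzero$ and is a minimal vector (a zero of the moment map) for the $\Hb$-action on $\Vzero$; this is the local-to-global content already present in the GIT references.

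Next I would use the direct product $\Gb=\exp(\RR\Beta)\times\Hb$ together with $\Gs=\Os\Qb=\Os\Gb\Ub$ to analyze $\overline{\Gs\cdot\mu}\cap\sca_\Beta$. Since $\sca_\Beta$ is $\Gs$-invariant and, after gauging, one can arrange representatives in $\Vnn$, the intersection $\overline{\Gs\cdot\mu}\cap\sca_\Beta$ is governed, modulo the $\Os$-action and the harmless positive scaling coming from $\exp(\RR\Beta)$ (which acts on $\Vzero$ by the scalar $e^{-\Vert\Beta\Vert^2 t}$ via $\pi(\Beta^+)$), by the $\Hb$-orbit closure of $\mu_0$ inside $\Vzero$. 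Because $\mu\in\Vnnss$ means $0\notin\overline{\Hb\cdot\mu_0}$, the action of the real reductive group $\Hb$ on $\Vzero$ restricted to the (open, $\Hb$-stable) set of $\mu_0$ with closed... more precisely one passes to $\overline{\Hb\cdot\mu_0}$, which contains a unique closed $\Hb$-orbit, and the minimal vectors in that closed orbit form a single $\Kb$-orbit up to scaling by the Kempf-Ness theorem for real reductive groups applied to $\Hb\curvearrowright\Vzero$. Translating back: the critical points of $\normmm$ in $\overline{\Gs\cdot\mu}\cap\sca_\Beta$ are exactly $\RR_{>0}\cdot\Os\cdot\mu_C$ where $\mu_C$ is the (unique up to $\Kb$, hence up to $\Os$) minimal vector of that closed $\Hb$-orbit, and one checks $\mmm(\mu_C)=\Beta$ from the fact that $\mu_C\in\Vzero=\Vzero$ is $\mmm_{\Hb}$-minimal, which forces the $\pg$-component of its moment map to be exactly $\Beta$.

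The main obstacle I expect is bookkeeping the precise relationship between the full moment map $\mmm:\Vs\setminus\{0\}\to\pg$ and the moment map $\mmm_{\Hb}$ for the sub-action of $\Hb$ on the slice $\Vzero$: one must show that a zero of $\mmm_{\Hb}$ at a point of $\Vzero$ is an honest critical point of the global energy $\normmm$ with $\mmm=\Beta$, and conversely that a critical point of $\normmm$ in $\sca_\Beta$ cannot have a nontrivial $\Vr$-component for $r>0$ (it must flow down to $\Vzero$). This is precisely the point where one needs the quantitative structure of the negative gradient flow of $\normmm$ from Theorem \ref{thm_stratifb}(iii) and the convexity properties of $\normmm$ along $\exp(\pg)$-orbits. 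The secondary difficulty is handling the $\RR_{>0}$-scaling and the $\exp(\RR\Beta)$-factor cleanly so that the final answer is stated as the single orbit $\RR_{>0}\cdot\Os\cdot\mu_C$ rather than a larger set; this amounts to observing that $\exp(t\Beta)$ acts on $\mu_C\in\Vzero$ as a positive scalar, so it contributes nothing beyond $\RR_{>0}$, and that $\Ub$ acts trivially on $\Vzero$-components up to things that get absorbed when passing to the closure. Everything else — continuity of eigenvalues, $\Os$-equivariance \eqref{eqn_Kequivmm}, the decomposition $\Gs=\Os\Qb$ — is routine and already recorded above, so modulo citing the real Kempf-Ness theorem from \cite{GIT} the proof is short.
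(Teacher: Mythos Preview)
The paper does not actually prove Theorem~\ref{thm_GITuniq}: it is quoted as a result from the companion reference \cite{GIT}, with no argument given here. So there is no ``paper's own proof'' to compare against in this text.

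That said, your outline is exactly the strategy used in \cite{GIT} to establish this uniqueness: gauge into $\Vnnss$, observe that a critical point $\mu_C$ with $\mmm(\mu_C)=\Beta$ automatically satisfies $\pi(\Beta^+)\mu_C=0$ (the critical point equation $\pi(\mmm(\mu_C))\mu_C=\Vert\mmm(\mu_C)\Vert^2\mu_C$ gives this directly), hence $\mu_C\in\Vzero$; then identify such $\mu_C$ with zeros of the $\Hb$-moment map on $\Vzero$, and invoke the real Kempf--Ness theorem for the reductive subgroup $\Hb$ to get uniqueness up to $\Kb$ and scaling. The factor $\exp(\RR\Beta)$ indeed acts on $\Vzero$ by positive scalars, accounting for the $\RR_{>0}$.

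The one genuine gap in your sketch is the step linking $\overline{\Gs\cdot\mu}\cap\sca_\Beta$ to $\overline{\Hb\cdot p_\Beta(\mu)}$ inside $\Vzero$. You assert that after gauging ``the intersection is governed by'' the $\Hb$-orbit closure of $\mu_0$, but this requires a proof: one must show that for any $\bar\mu\in\overline{\Qb\cdot\mu}\cap\Vnnss$, the projection $p_\Beta(\bar\mu)$ lies in $\overline{\Hb\cdot\mu_0}$ (up to scaling), and conversely. This is not automatic and is in fact one of the technical lemmas in \cite{GIT}; it uses that $\Ub$ acts on $\Vnn$ with $p_\Beta(u\cdot\nu)=p_\Beta(\nu)$ for $u\in\Ub$, together with a careful analysis of how limits in $\overline{\Qb\cdot\mu}$ project. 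Your sentence ``$\Ub$ acts trivially on $\Vzero$-components up to things that get absorbed when passing to the closure'' is where the real work hides. Once that is made precise, the rest of your argument goes through.
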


\section{Uniqueness of solvsolitons}\label{sec_uniq}

The main result of this section is Theorem \ref{thm_uniqsolvsol}, a `solvsoliton analogue' of Theorem \ref{thm_GITuniq}. Before turning to that, we recall  the correspondence between left-invariant metrics on
a Lie group and Lie brackets.

Let $\Ss$ be a simply-connected Lie group whose Lie algebra $\sg$ is endowed with a fixed background scalar product $\ip$. Denote by $\mu^\sg \in \Vs$ the Lie bracket of $\sg$. The set $\mca_{\mathsf{left}}(\Ss)$ of left-invariant Riemannian metrics  on $\Ss$ can be parameterized by the orbit $\Gs \cdot \mu^\sg  \subset \Vs$ as follows: any  $g' \in \mca_{\mathsf{left}}(\Ss)$ is determined by a scalar product $\ip'$ on $\sg$, which may be written as $ \ip' = \la h \, \cdot, h \, \cdot\ra$ for some $h\in \Gs$. We then associate to $g'$ the bracket $h \cdot \mu^\sg$. Recall that $\Gl(\sg)$ acts on $\Vs$ via \eqref{eqn_Gsaction}. 

Conversely, to a bracket $h \cdot \mu^\sg \in \Gs \cdot \mu^\sg$ we associate the left-invariant metric on $\Ss$ determined by the scalar product $\la h \,\cdot , h\,\cdot\ra$ on $\sg$. Notice that in both directions the map $h$ is not unique, thus this correspondence is one to one only when we take into account the action of the groups $\Aut(\sg,\mu^\sg) \simeq \Aut(\Ss)$ and $\Os$:
\begin{equation}\label{eqn_metricsbrackets}
	\mca_{\mathsf{left}}(\Ss) \,  /  \, \Aut(\Ss)  \quad  \leftrightsquigarrow \quad \Gs \cdot \mu^\sg \, / \, \Os.
\end{equation}
Here $\Aut(\Ss)$ acts by pull-back on $\mca_{\mathsf{left}}(\Ss)$, each orbit consisting of pairwise isometric metrics.
To every Lie bracket $\mu \in \Vs$ there corresponds a Riemannian manifold $(\Ss_\mu, g_\mu)$, where $\Ss_\mu$ is the simply-connected Lie group with Lie algebra $(\sg,\mu)$, and the metric
$g_\mu \in \mca_{\mathsf{left}}(\Ss_\mu)$ is determined by $\ip$ on $\sg \simeq T_ e \Ss_\mu$.

\begin{definition}
A \emph{solvsoliton} $(\Ss, g_{\mathsf{sol}})$ is a solvmanifold for which
\begin{equation}\label{eqn_solvsoliton}
      \Ricci_{g_{\mathsf{sol}}} = c \cdot \Id_\sg + D, \qquad c\in \RR, \qquad D\in \Der(\sg),
\end{equation}
 where  $\Ricci_{g_{\mathsf{sol}}}$ denotes the Ricci endomorphism at $e\in \Ss$ and $\Der(\sg)$ is the Lie algebra of derivations of $\sg$.  
The corresponding Lie bracket $ \mu \in \Gs \cdot \mu^\sg$ is also called a solvsoliton.
\end{definition}


We now turn to the main result of this section.

\begin{theorem}\label{thm_uniqsolvsol}
Let $(\sg,\mu)$ be a solvable Lie algebra of real type with $\mu \in \sca_\Beta$. Then, up to scaling
the set of solvsolitons in $\overline{ \Gl(\sg) \cdot \mu}\cap \sca_\Beta$
 is contained  in a unique $\Or(\sg)$-orbit.
\end{theorem}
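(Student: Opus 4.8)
The plan is to reduce the statement to Theorem~\ref{thm_GITuniq} by showing that every solvsoliton bracket $\mu' \in \overline{\Gl(\sg)\cdot\mu}\cap\sca_\Beta$ is a critical point of the energy $\normmm$, up to scaling. Indeed, by Theorem~\ref{thm_GITuniq} the critical points of $\normmm$ inside $\overline{\Gs\cdot\mu}\cap\sca_\Beta$ form a single $\RR_{>0}\cdot\Os$-orbit, so once the solvsoliton condition is translated into a critical-point condition, the claim follows at once. The key input here is the well-known equivalence, going back to Lauret, between the solvsoliton equation $\Ricci_{g_\mu} = c\,\Id_\sg + D$ with $D\in\Der(\sg)$ and the statement that $\mu$ is a critical point of $\normmm$ restricted to the sphere of its $\Gs$-orbit (equivalently, $\mmm(\mu)$ acts on $\mu$ in a controlled way); I would recall this fact and cite \cite{solvsolitons} or \cite{BL17}. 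So the first step is: let $\mu_1,\mu_2$ be two solvsolitons in $\overline{\Gl(\sg)\cdot\mu}\cap\sca_\Beta$; each is, up to scaling, a critical point of $\normmm$ lying in $\sca_\Beta$, hence $\mmm(\mu_i)\in\Os\cdot\Beta$ after rescaling.

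The subtlety — and this is where the real-type hypothesis enters — is that a priori a solvsoliton bracket in the closure could be \emph{flat}, or could degenerate, so that one cannot simply apply the soliton$\Leftrightarrow$critical-point dictionary, which presupposes a genuine non-flat solvmanifold structure. This is precisely what Lemma~\ref{lem_realnonflat} rules out: since $\mu$ is of real type, $\overline{\Gl(\sg)\cdot\mu}$ contains no non-zero flat bracket, so any non-zero bracket in the closure — in particular any solvsoliton in the closure — is non-flat, and for non-flat solvsolitons the critical-point characterization applies. (One should also note $0\notin\sca_\Beta$ by construction, so the zero bracket is excluded from the intersection anyway.) So the second step is to invoke Lemma~\ref{lem_realnonflat} to guarantee that every element of $\overline{\Gl(\sg)\cdot\mu}\cap\sca_\Beta$ is a non-flat solvsoliton bracket and hence, after rescaling, a critical point of $\normmm$ with moment map value in $\Os\cdot\Beta$.

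With both solvsolitons $\mu_1,\mu_2$ now recognized as (rescaled) critical points of $\normmm$ inside $\overline{\Gl(\sg)\cdot\mu}\cap\sca_\Beta$, Theorem~\ref{thm_GITuniq} gives $\mu_2\in\RR_{>0}\cdot\Os\cdot\mu_1$, which is exactly the assertion that up to scaling all solvsolitons in the intersection lie in a single $\Or(\sg)$-orbit. One small point requiring care: Theorem~\ref{thm_GITuniq} is stated for critical points in $\overline{\Gs\cdot\mu}\cap\sca_\Beta$, and we are given $\mu\in\sca_\Beta$, so $\overline{\Gl(\sg)\cdot\mu}=\overline{\Gs\cdot\mu}$ and the hypotheses match directly; I would spell this out.

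The main obstacle I anticipate is not the GIT step but verifying cleanly that a solvsoliton lying only in the \emph{closure} of an orbit (rather than in the orbit itself) still satisfies the critical-point equation for the ambient action on $\Vs$ — one must check that the soliton-to-moment-map correspondence is robust under degeneration within the closure, using that the limit bracket is itself a genuine solvable Lie bracket of real type (so that Lemma~\ref{lem_realnonflat} applies to \emph{its} orbit closure too, which sits inside $\overline{\Gl(\sg)\cdot\mu}$) and hence non-flat. Once non-flatness is secured, the characterization of solvsoliton brackets as critical points of $\normmm$ is exactly the statement one needs, and the rest is a direct appeal to Theorem~\ref{thm_GITuniq}.
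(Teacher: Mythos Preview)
There is a genuine gap: the ``well-known equivalence'' you invoke --- solvsoliton bracket $\Leftrightarrow$ critical point of $\normmm$ --- holds only in the \emph{nilpotent} case. For a general solvable bracket $\mu$ the Ricci endomorphism is not a multiple of the moment map; by \eqref{eqn_Ricmu}--\eqref{eqn_Ricmod} one has
\[
	\Riccim_\mu = \mm_\mu - \tfrac12 \kf_\mu, \qquad \mm_\mu = \tfrac14 \, \mmm(\mu)\,\Vert\mu\Vert^2,
\]
and the Killing form term $\kf_\mu$ is non-zero as soon as the nilradical is proper. Proposition~\ref{prop_solvsol} gives $\Riccim_{\musol}=\Beta$, not $\mmm(\musol)=\Beta$; these coincide only when $\kf_{\musol}=0$. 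So your claimed reduction to Theorem~\ref{thm_GITuniq} is missing the essential step.

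The paper fills this gap with Lemma~\ref{lem_solvsolmmsol}: given a solvsoliton $\mu_i$ with $\Riccim_{\mu_i}=\Beta$, one explicitly constructs $h_i=\minimatrix{(h_i)_\ag}{0}{0}{\Id_\ngo}$ (acting only on the abelian complement $\ag$ of the nilradical) such that $h_i\cdot\mu_i$ \emph{is} a critical point of $\normmm$ with $\mmm(h_i\cdot\mu_i)=\Beta$. Only then does Theorem~\ref{thm_GITuniq} apply, yielding $h_1\cdot\mu_1\in\Os\cdot(h_2\cdot\mu_2)$. But this does not finish the argument: one must still undo the $h_i$'s and show the original $\mu_1,\mu_2$ lie in a single $\Os$-orbit. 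This uses that the orthogonal map $k$ produced by Theorem~\ref{thm_GITuniq} commutes with $\Beta$ (hence is block-diagonal w.r.t.~$\ag\oplus\ngo$), and then Lemma~\ref{lem_solvsolmmsol}(i) together with $\Riccim_{\mu_1}=\Riccim_{\mu_2}=\Beta$ and $\Beta|_\ag=c\cdot\Id_\ag$ forces the remaining $\ag$-block to be orthogonal. Your outline skips both the passage to critical points and the return trip, and neither is automatic.
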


This immediately implies

\begin{corollary}\label{cor_uniqsolvsolmetric}
Let $(\Ss, \gsol)$ be a non-flat solvsoliton. Then, any other left-invariant Ricci soliton metric on $\Ss$ is of the form $\alpha \cdot \psi^* \gsol$, for some $\alpha > 0$, $\psi\in \Aut(\Ss)$.
\end{corollary}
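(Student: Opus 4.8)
The plan is to deduce Corollary \ref{cor_uniqsolvsolmetric} from Theorem \ref{thm_uniqsolvsol} together with the dictionary \eqref{eqn_metricsbrackets} between left-invariant metrics modulo automorphisms and the orbit $\Gs\cdot\mu^\sg$ modulo $\Os$. First I would fix a non-flat solvsoliton $(\Ss,\gsol)$ with Lie algebra $(\sg,\mu^\sg)$, and observe that since $\Ss$ admits a solvsoliton metric, $\sg$ is of real type by Remark \ref{rem_solvsolreal} (\cite[Prop.~8.4]{Jbl2015}). Choose the background scalar product $\ip$ so that it is exactly the one determined by $\gsol$; then the bracket $\mu := \mu^\sg$ is itself a solvsoliton bracket, and it lies in some stratum $\sca_\Beta$. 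By Lemma \ref{lem_realnonflat}, since $\mu$ is of real type, $\overline{\Gs\cdot\mu}$ contains no non-zero flat bracket; in particular $\mu$ itself is non-flat, which is consistent with $(\Ss,\gsol)$ being non-flat.

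Next I would translate an arbitrary left-invariant Ricci soliton metric $g'$ on $\Ss$ into the bracket picture. By \eqref{eqn_metricsbrackets}, $g'$ corresponds to a bracket $\mu' = h\cdot\mu \in \Gs\cdot\mu$ for some $h\in\Gs$, well-defined up to the action of $\Os$ and $\Aut(\sg,\mu)$; and $g'$ being a Ricci soliton means precisely that $\mu'$ is a solvsoliton bracket in the sense of Definition \ref{eqn_solvsoliton} (here one uses that a homogeneous Ricci soliton on a solvable Lie group is automatically a solvsoliton, i.e.\ the soliton vector field comes from an automorphism — this is the characterization in \cite{solvsolitons}, which may be cited). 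Now $\mu' \in \Gs\cdot\mu \subset \overline{\Gs\cdot\mu}$, and since the stratification is $\Gs$-invariant (Theorem \ref{thm_stratifb}), $\mu'$ lies in the same stratum $\sca_\Beta$ as $\mu$. Thus both $\mu$ and $\mu'$ are solvsolitons in $\overline{\Gl(\sg)\cdot\mu}\cap\sca_\Beta$.

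Theorem \ref{thm_uniqsolvsol} then applies directly: the set of solvsolitons in $\overline{\Gl(\sg)\cdot\mu}\cap\sca_\Beta$ lies, up to scaling, in a single $\Or(\sg)$-orbit. Hence there exist $\alpha>0$ and $k\in\Os$ with $\mu' = \alpha\cdot k\cdot\mu$. Rescaling a bracket by $\alpha$ corresponds to rescaling the metric by a (positive) factor, and acting by $k\in\Os$ corresponds to the trivial change (it does not alter the isometry class, as the correspondence \eqref{eqn_metricsbrackets} is already taken modulo $\Os$); more precisely, $\mu' = h\cdot\mu$ and $\alpha k\cdot\mu$ lying in the same $\Os$-orbit up to scaling means $h$ differs from an orthogonal map times a scalar by an element of $\Aut(\sg,\mu)$. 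Unwinding \eqref{eqn_metricsbrackets} in the reverse direction, this says exactly that $g' = \alpha\cdot\psi^*\gsol$ for some $\psi\in\Aut(\Ss)$ corresponding to that automorphism of $(\sg,\mu)$, and some $\alpha>0$ (the scaling absorbed from $\alpha$ and from whatever positive scalar appears).

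The main obstacle I anticipate is bookkeeping rather than conceptual: carefully tracking how the scalings, the $\Os$-ambiguity, and the $\Aut(\sg,\mu)$-ambiguity in the metric/bracket correspondence \eqref{eqn_metricsbrackets} interact, so as to produce a genuine automorphism $\psi\in\Aut(\Ss)$ and a single positive constant $\alpha$ in the final formula $\alpha\cdot\psi^*\gsol$. One must also make sure the input hypothesis of Theorem \ref{thm_uniqsolvsol} — that $\mu\in\sca_\Beta$ — is legitimately available, which it is since every nonzero bracket lies in exactly one stratum by Theorem \ref{thm_stratifb}(i), and then verify that the arbitrary soliton metric's bracket indeed stays in that same stratum, which follows from $\Gs$-invariance of the stratification. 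No genuinely hard analysis is needed here; the corollary is a clean consequence once the translation is set up correctly.
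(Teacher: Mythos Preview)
Your proposal is correct and follows essentially the same route as the paper's proof: establish that $\Ss$ is of real type via Remark \ref{rem_solvsolreal}, upgrade the arbitrary left-invariant Ricci soliton to a solvsoliton, pass to brackets using \eqref{eqn_metricsbrackets}, apply Theorem \ref{thm_uniqsolvsol} (both brackets lie in the same stratum since the stratification is $\Gs$-invariant), and unwind. One small correction: the fact that a left-invariant Ricci soliton on $\Ss$ is automatically a solvsoliton is cited in the paper as \cite[Prop.~8.4]{Jbl2015}, not \cite{solvsolitons}; also, your invocation of Lemma \ref{lem_realnonflat} is unnecessary, since the non-flatness of $(\Ss,\gsol)$ is a hypothesis, not something to be deduced.
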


\begin{proof}
The group $\Ss$ is of real type by Remark \ref{rem_solvsolreal}. Hence by \cite[Prop.~8.4]{Jbl2015} any left-invariant Ricci soliton $g$ on $\Ss$ is a solvsoliton.
After rescaling $g$, by Theorem \ref{thm_uniqsolvsol} we may assume that $\gsol$ and $g$ have associated solvsoliton brackets $\musol, \mu \in V(\sg)$  with 
 $\musol = k\cdot \mu$ for some $k\in \Or(\sg)$. 
 Thus, by \eqref{eqn_metricsbrackets} the metrics $\gsol$ and $g$ are isometric via an automorphism.
\end{proof}

The  uniqueness of solvsolitons up to equivariant isometry was known for completeley solvable groups: see \cite{Heber1998} for the Einstein case and \cite{solvsolitons} for solvsolitons.

We now work towards a proof of Theorem \ref{thm_uniqsolvsol}. The idea is that starting with a solvsoliton bracket one can explicitly construct a bracket on the same $\Gl(\sg)$-orbit which is a critical point of the energy map $\normmm$, and then Theorem \ref{thm_GITuniq} can be applied. 


Let us recall the following formula for the Ricci endomorphism $\Ricci_\mu \in \Syms$ of 
a Lie bracket $\mu \in V(\sg)$:
\begin{equation}\label{eqn_Ricmu}
    \Ricci_\mu \, = \, \mm_\mu - \unm \kf_\mu - \unm \left(\ad_\mu \mcv_\mu  + (\ad_\mu \mcv_\mu) ^t \right)\,.
\end{equation}
Here,  $\mm_\mu = \unc \cdot \mmm(\mu) \cdot \Vert \mu \Vert^2$ is a multiple of the moment map $\mmm(\mu)$ defined in \eqref{eqn_defmmb} and
$\la \kf_\mu X, Y \ra = \tr (\ad_\mu X \ad_\mu Y)$ is the endomorphism
associated to the Killing form. The mean curvature vector $\mcv_\mu$ is implicitly defined by $\la \mcv_\mu, X \ra = \tr \ad_\mu X$ for all $X\in\sg$, and $(\cdot)^t$ denotes the transpose with respect to $\ip$. The \emph{modified Ricci curvature} is defined by
\begin{equation}\label{eqn_Ricmod}
  \Riccim_\mu \, := \, \mm_\mu - \unm \kf_\mu.
\end{equation}
Moreover, we set $\scalm(\mu)=\tr \Riccim_\mu$. Notice, that
for non-flat solvmanifolds $\scalm(\mu)<0$ by  Lemmas 3.5 and 3.6 in \cite{BL17}.

In terms of the stratification from Section \ref{sec_stratif}, by \cite[Thm.~6.4 $\&$ Cor.~C.3]{BL17} we have

\begin{proposition}\cite{BL17}\label{prop_solvsol}
Let $\musol \! \in \Vnnss \subset \! \sca_\Beta$ be a solvsoliton with  $\scalm(\musol) = -1$. Then,
\[
      \Riccim_\musol \, = \, \Beta \, =  \, c \cdot \Id_\sg + D,  \qquad \, c =  - \Vert \Beta \Vert^2, \quad D =  \Beta^+ \in \Der(\sg, \musol).
\]
Moreover, $\Beta^+ = \Beta + \Vert \Beta \Vert^2 \cdot \Id_\sg  \geq 0$, and its image is the nilradical of $(\sg,\musol)$.
\end{proposition}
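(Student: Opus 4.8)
The plan is to pass from the solvsoliton equation to the \emph{modified} Ricci endomorphism and then identify the latter with the stratum label $\Beta$ using the real GIT of Section \ref{sec_stratif}. Throughout I work with the orthogonal splitting $\sg = \ag \oplus \ngo$, where $\ngo$ is the nilradical and $\ag = \ngo^\perp$. \textit{Step 1 (modified algebraic soliton form).} Starting from $\Ricci_\musol = c\,\Id_\sg + D_0$ with $D_0\in\Der(\sg,\musol)$, I would first use that solvsolitons are standard, so that $\ag$ is abelian and the mean curvature vector $\mcv_\musol$ lies in $\ag$, and that $\ngo$ carries the induced nilsoliton. Since $\mcv_\musol\in\ag$ and $\ag$ is abelian, $\ad_\musol\mcv_\musol$ preserves the splitting, annihilates $\ag$, and leaves $\ngo$ invariant; feeding the structure of the $\ag$-action on the nilsoliton $\ngo$ into \eqref{eqn_Ricmu}--\eqref{eqn_Ricmod} one checks that the symmetric correction $\unm\big(\ad_\musol\mcv_\musol + (\ad_\musol\mcv_\musol)^t\big)$ is itself a derivation, so that
\[
  \Riccim_\musol \,=\, c\,\Id_\sg + D, \qquad D\in\Der(\sg,\musol),\ D=D^t .
\]
Because $D$ restricts to a multiple of the positive nilsoliton derivation on $\ngo$ and kills $\ag$, one also gets $D\geq 0$ with $\operatorname{im}D = \ngo$.

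\textit{Step 2 (identification with $\Beta$).} Since $\pi(D)\musol = 0$ for any $D\in\Der(\sg,\musol)$ by \eqref{eqn_gsrep}, and $\pi(\Id_\sg) = -\Id$, Step 1 yields $\pi(\Riccim_\musol)\,\musol = -c\,\musol$. Writing $\beta_0 := \Riccim_\musol \in \pg$, this says that $\exp(t\beta_0)$ acts on $\musol$ purely by rescaling: $\exp(-t\beta_0)\cdot\musol = e^{tc}\musol\to 0$ as $t\to\infty$ (here $c<0$, as $\musol$ is a non-flat, hence expanding, soliton), so $-\beta_0$ is a destabilizing direction, and moreover $\la \mmm(\musol),\beta_0\ra = -c$. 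The task is to upgrade this to the statement that $\beta_0$ is the \emph{optimal} (norm-minimizing) destabilizing direction, i.e.\ the label of the stratum $\sca_\Beta$ containing $\musol$. Using the gauge hypothesis $\musol\in\Vnnss$, I would decompose $\musol$ along the $\pi(\Beta^+)$-eigenspaces $\Vr$ and invoke the extremality characterization of $\Beta$ from \cite{GIT} (equivalently, the uniqueness of Theorem \ref{thm_GITuniq} transported to $\Vzero$ via the $\Hb$-action) to conclude that $\beta_0$ satisfies the Kempf--Ness numerical condition $\la\mmm(\musol),\beta_0\ra = \Vert\beta_0\Vert^2$. This forces $c = -\Vert\beta_0\Vert^2$ and pins down $\beta_0 = \Beta$ exactly, the scale being fixed by the normalization $\scalm(\musol) = \tr\Riccim_\musol = -1$.

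\textit{Step 3 (conclusion).} With $\Riccim_\musol = \Beta$ and $c = -\Vert\Beta\Vert^2$ in hand, the decomposition $\Beta = -\Vert\Beta\Vert^2\,\Id_\sg + D$ gives $\Beta^+ = \Beta + \Vert\Beta\Vert^2\,\Id_\sg = D\in\Der(\sg,\musol)$, and the positivity $\Beta^+\geq 0$ together with $\operatorname{im}\Beta^+ = \ngo$ are exactly the properties of $D$ established in Step 1.

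\textit{Main obstacle.} The genuine difficulty is Step 2. Unlike a nilsoliton, a generic solvsoliton is \emph{not} a critical point of $\normmm$: the Killing-form term $\kf_\musol$ in \eqref{eqn_Ricmu} is supported on $\ag$, but once $\ngo$ is non-abelian its $\pi$-image is not a multiple of $\musol$, which obstructs $\pi(\mmm(\musol))\,\musol\in\RR\,\musol$. Hence one cannot simply read off $\Beta$ from the critical-point/moment-map correspondence; instead one must control the finer structure of the stratum---the decomposition of $\musol$ into its $\Vzero$-component and its positive part, together with the $\Hb$-action on $\Vzero$---and verify the numerical extremality $c = -\Vert\Riccim_\musol\Vert^2$ directly, which is precisely where the solvsoliton identities must be exploited.
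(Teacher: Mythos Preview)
The paper does not give its own proof of this proposition; it imports the result wholesale from \cite[Thm.~6.4 \& Cor.~C.3]{BL17}. So there is no in-paper argument to compare against, and I evaluate your outline on its own merits.

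Your Step~1 is essentially correct, though the claim that $\unm(\ad_\musol\mcv_\musol+(\ad_\musol\mcv_\musol)^t)$ is a derivation is not automatic from normality of $\ad_\musol Y$ alone; it also needs that the family $\{\ad_\musol Y|_\ngo,(\ad_\musol Y|_\ngo)^t:Y\in\ag\}$ is commutative, which is part of the solvsoliton structure theory you are invoking. Granting that, you indeed obtain $\Riccim_\musol=c\,\Id+D$ with $D=D^t\in\Der(\sg,\musol)$, $D|_\ag=0$, $D|_\ngo>0$.

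The real gap is Step~2, and your ``Main obstacle'' paragraph is an honest acknowledgment rather than a resolution. Two distinct things must be shown: (a) $c=-\Vert\beta_0\Vert^2$ where $\beta_0:=\Riccim_\musol$, and (b) $\beta_0=\Beta$. For (a) your appeal to a ``Kempf--Ness numerical condition'' via extremality is misplaced; in fact (a) follows by a direct computation that does not use GIT at all. Since $\la\mmm(\mu),E\ra=0$ for every derivation $E$ by \eqref{eqn_defmmb}, and since $\kf_\musol$ is supported on $\ag$ while $D|_\ag=0$, one has
\[
  \la\beta_0,D\ra=\la\mm_\musol,D\ra-\tfrac12\la\kf_\musol,D\ra=0,
\]
whence $\Vert\beta_0\Vert^2=\la\beta_0,c\,\Id+D\ra=c\,\tr\beta_0=-c$. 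This already gives $\beta_0^+=D\ge0$ with image $\ngo$, i.e.\ the final clause of the proposition with $\beta_0$ in place of $\Beta$.

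Part (b), the identification $\beta_0=\Beta$, is where the GIT input is genuinely needed and where your sketch is too vague. From $\pi(\beta_0^+)\musol=0$ one only knows that $\beta_0$ is \emph{some} admissible direction with $\musol\in V_{\beta_0^+}^0$; one must still argue that it is the optimal one in the sense of the stratification. The route in \cite{BL17} uses the gauge hypothesis $\musol\in\Vnnss$ together with $\Der(\musol)\subset\qg_\Beta$ (so that the symmetric derivation $D$ lies in $\ggo_\Beta$ and commutes with $\Beta$) to compare $\beta_0$ and $\Beta$ inside a common maximal torus and force equality; your proposal gestures at ``the extremality characterization of $\Beta$'' and ``Theorem~\ref{thm_GITuniq} transported to $\Vzero$'' without carrying out this comparison. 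As written, you have established that $\Riccim_\musol$ has the \emph{shape} of a stratum label, but not that it coincides with the label $\Beta$ of the stratum that actually contains $\musol$.
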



Next, let us briefly review some of the structural results for solvsolitons from \cite{solvsolitons}. Given a solvsoliton bracket $\mu\in \Vs$ with nilradical $\ngo$, consider the orthogonal decomposition $\sg = \ag \oplus\ngo$. We have that $\mu(\ag,\ag) = 0$, and for all $Y\in \ag$, $X\in \ngo$, it holds that
\begin{equation}\label{eqn_properties}
   \left[\ad_\mu Y, (\ad_\mu Y)^t\right] = 0, \qquad \tr \left((\ad_\mu Y) \, (\ad_\mu X)^t \right) = 0\,.
\end{equation}
Moreover, the symmetric endomorphism $\mm_\mu$ defined after equation \eqref{eqn_Ricmu} satisfies 
\begin{equation}\label{eqn_mmmuan}
  \la \mm_\mu Y, Y \ra  = -\unm  \Vert {\ad_\mu Y} \Vert^2, \qquad \la \mm_\mu Y, X\ra = 0, \qquad \la \mm_\mu X, X \ra= \la \mm_\nu X, X\ra,
\end{equation}
for all $Y\in \ag$, $X\in \ngo$. Here $\nu : \ngo\wedge \ngo \to \ngo$ denotes the restriction of $\mu$ to $\ngo$ (see \cite[Prop. 4.13]{LafuenteLauret2014b}).
This yields in particular $\mm_\mu|_\ag < 0$, since $\ad_\mu Y = 0$ implies $Y\in \ngo$.
 
The next lemma shows how to modify a solvsoliton  to obtain a critical point of  $\normmm$.

\begin{lemma}\label{lem_solvsolmmsol}
Let $\mu\in \sca_\Beta$ be a solvsoliton Lie bracket with $\Ricci_\mu^* = \Beta$, and let $\sg = \ag \oplus \ngo$ be the orthogonal decomposition where $\ngo$ is the nilradical of $\mu$.
\begin{itemize}
  \item[(i)] If $h = \minimatrix{h_\ag}{0}{0}{\Id_\ngo} \in \Gl(\sg)$, then
  \[
    \mm_{h\cdot \mu} = (h^{-1})^t \, \mm_\mu \, h^{-1}, \qquad \Riccim_{h\cdot \mu} = (h^{-1})^t \, \Riccim_\mu \, h^{-1}.
  \]
  \item[(ii)] There exists $h = \minimatrix{h_\ag}{0}{0}{\Id_\ngo} \in \Gl(\sg)$ such that $h \cdot \mu$ is a critical point of the norm squared of the moment map $\normmm$, with $\mmm(h\cdot \mu) = \Beta$.
\end{itemize}
\end{lemma}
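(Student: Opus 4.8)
The plan is to prove the two assertions in turn, using the structural identities \eqref{eqn_properties}, \eqref{eqn_mmmuan} that are available for solvsoliton brackets, together with Proposition \ref{prop_solvsol}.

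\textbf{Part (i).} For $h=\minimatrix{h_\ag}{0}{0}{\Id_\ngo}$ I would first observe that since $\ngo$ is an ideal and $\mu(\ag,\ag)=0$, the bracket $h\cdot\mu$ has the same nilradical $\ngo$, and the decomposition $\sg=\ag\oplus\ngo$ is still $\ip$-orthogonal. The identity $\Riccim_{h\cdot\mu}=(h^{-1})^t\Riccim_\mu h^{-1}$ will follow from the identity for $\mm_\mu$ together with the analogous behaviour of the Killing-form endomorphism $\kf_\mu$; the latter is a standard computation since $\kf_{h\cdot\mu}=(h^{-1})^t\kf_\mu h^{-1}$ holds for any $h\in\Gl(\sg)$ by \eqref{eqn_adconj} and the fact that the trace form is $\Gl(\sg)$-invariant under conjugation. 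So the real content is the claim $\mm_{h\cdot\mu}=(h^{-1})^t\mm_\mu h^{-1}$. This is \emph{not} true for general $h$ (the moment map is only $\Os$-equivariant, cf.\ \eqref{eqn_Kequivmm}), so I must use the special block form of $h$ and the solvsoliton identities. Writing $A=h$ as a symmetric positive block times an orthogonal part is not available in general; instead I would use the defining relation \eqref{eqn_defmmb}: $\la\pi(E)(h\cdot\mu),h\cdot\mu\ra$ for $E\in\pg$ versus $\la\pi(h^tEh)\mu,\mu\ra$, reducing via $\pi(E)(h\cdot\mu)=\pi(E)h\cdot\mu$ and the adjoint identity $\la\pi(E)(h\cdot\mu),h\cdot\mu\ra = \la\pi(h^t E (h^{-1})^t\cdot\text{(something)})\ldots\ra$ — but the clean route is to use \eqref{eqn_mmmuan}: the block structure of $\mm_\mu$ (namely $\mm_\mu(\ag)\subset\ag$, $\mm_\mu(\ngo)\subset\ngo$, with $\mm_\mu|_\ngo=\mm_\nu|_\ngo$ independent of the $\ag$-part of the bracket) together with the analogous block structure of $h$ shows that conjugating the bracket by $h$ only rescales the $\ag$-block of $\mu$ (the maps $\ad_{h\cdot\mu}Y$ for $Y\in\ag$ become $h_\ag\circ(\text{conjugate})$), and then a direct check on each of the three blocks in \eqref{eqn_mmmuan} gives the transformation law. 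I expect this block-by-block verification to be the technical heart of (i), but it is routine once the identities are unpacked.

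\textbf{Part (ii).} Here the idea is to choose $h_\ag$ so that the $\ag$-block of the modified Ricci curvature $\Riccim_{h\cdot\mu}$ becomes a multiple of the identity on $\ag$, matching what Proposition \ref{prop_solvsol} requires of a critical point. By Proposition \ref{prop_solvsol}, a suitably scaled solvsoliton $\musol$ has $\Riccim_{\musol}=\Beta=c\,\Id_\sg+\Beta^+$ with $\Beta^+\geq0$ having image $\ngo$; in particular $\Beta|_\ag=c\,\Id_\ag$. Now $h\cdot\mu$ is a critical point of $\normmm$ with $\mmm(h\cdot\mu)=\Beta$ precisely when $\Riccim_{h\cdot\mu}=\Riccim_{\musol}$ after the appropriate normalization — more precisely, the Lauret-type criterion (used implicitly in \cite{BL17}) is that $\mu'$ is a critical point iff $\mmm(\mu')\in\Der(\sg,\mu')$, equivalently $\Riccim_{\mu'}=c'\Id+D'$ with $D'\in\Der$. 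So I would argue: since $\mm_\mu|_\ag<0$ (noted right after \eqref{eqn_mmmuan}) and $\kf_\mu|_\ag$ is symmetric, $\Riccim_\mu|_\ag=\mm_\mu|_\ag-\tfrac12\kf_\mu|_\ag$ is a symmetric endomorphism of $\ag$; using part (i), $\Riccim_{h\cdot\mu}|_\ag=(h_\ag^{-1})^t\,\Riccim_\mu|_\ag\,h_\ag^{-1}$, and by choosing $h_\ag$ appropriately (diagonalizing $\Riccim_\mu|_\ag$ by an orthogonal map and then applying a diagonal scaling) one can arrange $\Riccim_{h\cdot\mu}|_\ag = c\cdot\Id_\ag$. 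The point that makes this work — and that I'd need to check — is that the off-diagonal block $\Riccim_{h\cdot\mu}(\ag,\ngo)$ stays zero (it does, by the block structure from (i) and \eqref{eqn_mmmuan}) and that the $\ngo$-block already equals the corresponding block of $\Beta$ because $h$ acts trivially on $\ngo$ and $\musol,\mu$ have the same nilradical data $\nu$. After normalizing $\scalm(h\cdot\mu)=-1$, the equality $\Riccim_{h\cdot\mu}=\Beta$ then holds on all of $\sg$, i.e.\ $\mmm(h\cdot\mu)=\Beta$ and, by Proposition \ref{prop_solvsol}'s converse characterization, $h\cdot\mu$ is a critical point of $\normmm$.

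\textbf{Main obstacle.} The delicate step is (i), specifically the non-obvious claim $\mm_{h\cdot\mu}=(h^{-1})^t\mm_\mu h^{-1}$ for block-diagonal $h$ — this fails for general $h$ and relies essentially on the solvsoliton identities \eqref{eqn_properties}–\eqref{eqn_mmmuan}, in particular $\tr((\ad_\mu Y)(\ad_\mu X)^t)=0$ and the fact that $\mm_\mu|_\ngo$ depends only on $\nu$. Once (i) is established, (ii) is a matter of choosing $h_\ag$ by spectral decomposition of the positive-definite form $-\Riccim_\mu|_\ag$ (which is positive definite because $\mm_\mu|_\ag<0$ and $\kf_\mu|_\ag\geq0$ for a solvable Lie algebra, as $\ag$ acts on $\ngo$), and reading off the critical-point characterization from Proposition \ref{prop_solvsol}.
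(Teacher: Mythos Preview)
Your outline for part (i) is essentially the paper's argument: a block-by-block verification using the solvsoliton identities \eqref{eqn_properties}, \eqref{eqn_mmmuan}, together with the general transformation law $\kf_{h\cdot\mu}=(h^{-1})^t\kf_\mu h^{-1}$.

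Part (ii), however, has a genuine gap. You conflate the modified Ricci endomorphism $\Riccim_\mu$ with the moment-map part $\mm_\mu$. The Lauret criterion for critical points of $\normmm$ reads $\mmm(\mu')\in\RR\,\Id+\Der(\mu')$, and $\mmm(\mu')$ is a positive multiple of $\mm_{\mu'}$, \emph{not} of $\Riccim_{\mu'}=\mm_{\mu'}-\tfrac12\kf_{\mu'}$. Thus the condition ``$\Riccim_{\mu'}=c'\Id+D'$ with $D'\in\Der$'' characterizes solvsolitons, not critical points of $\normmm$; the whole point of the lemma is that these are different. Concretely, your target $\Riccim_{h\cdot\mu}|_\ag=c\,\Id_\ag$ is, by part (i) and the hypothesis $\Riccim_\mu=\Beta$ (so $\Riccim_\mu|_\ag=c\,\Id_\ag$), equivalent to $(h_\ag^{-1})^t h_\ag^{-1}=\Id_\ag$, i.e.\ $h_\ag\in\Or(\ag)$. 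So your construction collapses to $h\in\Or(\sg)$ and produces only $\Or(\sg)$-translates of $\mu$, which are again solvsolitons and not, in general, critical points of $\normmm$.

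The paper instead targets $\mm_{h\cdot\mu}=\Beta$ directly. Since $\mm_\mu|_\ngo=\Riccim_\mu|_\ngo=\Beta|_\ngo$ (as $\kf_\mu|_\ngo=0$) is already correct and unaffected by $h$, one only needs to fix the $\ag$-block. Using part (i), $\mm_{h\cdot\mu}=(h^{-1})^t\mm_\mu h^{-1}$, and writing $\mm_\mu=\Riccim_\mu+\tfrac12\kf_\mu=\Beta^+-\Vert\Beta\Vert^2\Id_\sg+\tfrac12\kf_\mu$, the explicit choice
\[
  h^t h=\Id_\sg-\tfrac{1}{2\Vert\Beta\Vert^2}\kf_\mu
\]
(which is positive definite because its $\ag$-block equals $-\Vert\Beta\Vert^{-2}\mm_\mu|_\ag>0$ and its $\ngo$-block is $\Id_\ngo$) gives $\mm_{h\cdot\mu}=\Beta$ after a short computation using $(h^{-1})^t\Beta^+ h^{-1}=\Beta^+$. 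From $\mm_{h\cdot\mu}=\Beta$ and $\tr\mmm=-1=\tr\Beta$ one gets $\mmm(h\cdot\mu)=\Beta$, and since $\normmm\geq\Vert\Beta\Vert^2$ on $\sca_\Beta$ this is a minimum, hence a critical point.
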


\begin{proof}
Set $\mub := h \cdot \mu$. Notice that $\mub(\ag, \ag) = 0$ and $\mub|_{\ngo \wedge \ngo} = \nu$. By \cite[Lemma 4.4]{LafuenteLauret2014b}, if $\{ Y_i\}$ is an orthonormal basis of $\ag$ then for $Y\in \ag$, $X\in \ngo$ we have that 
\begin{align*}
	\langle \mm_\mub Y, Y \rangle &= -\unm \tr \ad_\mub Y (\ad_\mub Y)^t, \\
	\langle \mm_\mub X, X \rangle &= \langle \mm_\nu X, X\rangle + \unm \sum \big\langle [\ad_\mub Y_i, (\ad_\mub Y_i)^t] X, X \big\rangle, \\
	\langle \mm_\mub Y, X \rangle &= -\unm \tr \ad_\mub Y (\ad_\mub X)^t.
\end{align*}
Since $(\ad_\mu Y)(\ngo) \subset \ngo$ and $\ad_\mu Y|_\ag = 0$, \eqref{eqn_adconj} implies that  $\ad_{\mub} Y = \ad_\mu (h^{-1} Y)$ for any $Y\in \ag$. Using that and \eqref{eqn_properties}, \eqref{eqn_mmmuan} one can easily verify the formula for $\mm_\mub$.
Since $\kf_{\mub} = (h^{-1})^t \kf_\mu h^{-1}$ (see \cite[Lemma 3.7]{homRF}), the formula for the modified Ricci curvature also follows.

To prove (ii), we look for a map $h = \minimatrix{h_\ag}{0}{0}{\Id_\ngo} \in \Gl(\sg)$ such that $\mm_{h\cdot \mu} = \Beta$. It will then follow that $h\cdot \mu$ is a critical point of the energy map $\normmm$. Indeed, $\mm_{h\cdot \mu}=\tfrac{1}{4}\mmm(h\cdot \mu)\cdot \Vert h\cdot \mu\Vert^2$,  and $\tr \mmm(h\cdot \mu) = -1 = \tr\Beta$  by \cite[Lemma 3.7]{homRF}, 
from which we deduce $\mmm(h\cdot \mu) = \Beta$. But $\normmm \geq \Vert \beta \Vert^2$ on $\sca_\Beta$ by Theorem \ref{thm_stratifb}, (iii).  

To that end, let $h = \minimatrix{h_\ag}{0}{0}{\Id_\ngo} \in \Gl(\sg)$ satisfying
\[
  h^t h =  \Id_\sg - \tfrac{1}{2 \Vert \Beta \Vert^2} \cdot \kf_\mu.
\]
Such an $h$ exists if and only if the right-hand-side is positive definite. But $\kf_\mu |_\ngo = 0$, thus on $\ngo$ we have $\Id_\ngo$. And the restriction to $\ag$ equals $- \Vert \Beta\Vert^{-2} \mm_\mu|_\ag > 0$, by the fact that $\Riccim_\mu = \Beta$ and Proposition \ref{prop_solvsol}. Hence there is at least one such $h$.
Using (i) we obtain
\begin{align*}
  \mm_{h\cdot \mu} =& \, \,(h^{-1})^t \, \mm_\mu \, h^{-1} = (h^{-1})^t \, \big( \Riccim_\mu + \unm \cdot \kf_\mu \big) \, h^{-1} \\
  =&  \, \, (h^{-1})^t \, \big( \Beta^+ - \Vert \Beta \Vert^2 \cdot \Id_\sg  + \unm \cdot \kf_\mu \big) \, h^{-1} \\
  =& \, \,  (h^{-1})^t \, \big( \Beta^+ - \Vert \Beta \Vert^2 \cdot h^t h \big) \, h^{-1} = \Beta,
\end{align*}
where in the last step we are using the identity $(h^{-1})^t \, \Beta^+ \, h^{-1} = \Beta^+$, which follows at once from the special form of $h$ and the properties of $\Beta^+$ stated in Proposition \ref{prop_solvsol}. 
\end{proof}

\begin{proof}[Proof of Theorem \ref{thm_uniqsolvsol}]
Let $\mu_1, \mu_2 \in \overline{\Gl(\sg) \cdot \mu^\sg} \cap \sca_\Beta$ be two solvsoliton brackets. By \eqref{eqn_SbetaKU} and Proposition \ref{prop_solvsol}, after acting with $\Or(\sg)$, we may assume that 
 $\Riccim_{\mu_1} = \Riccim_{\mu_2} = \Beta$
and that the nilradicals of $\mu_1$ and $\mu_2$ equal to $\ngo$. Setting $\sg = \ag\oplus \ngo$,
by Lemma \ref{lem_solvsolmmsol}, (ii)
there exist maps $h_i = \minimatrix{(h_i)_\ag}{0}{0}{\Id_\ngo} \in \Gs$, $i=1,2$, such that $h_i\cdot \mu_i \in \overline{\Gl(\sg) \cdot \mu^\sg}$ are critical points of $\normmm$ with $\mmm(h_i \cdot \mu_i) = \Beta$. Theorem \ref{thm_GITuniq} then yields $h_1\cdot \mu_2 = (kh_2) \cdot \mu_2$ for some $k\in \Or(\sg)$. This implies that $\Beta = \mmm(h_1 \cdot \mu_1) = k \mmm(h_2 \cdot \mu_2) \, k^{-1} = k \, \Beta \, k^{-1}$ by \eqref{eqn_Kequivmm}, and hence $k$ commutes with $\Beta$ and $\Beta^+$, thus 
$k = \minimatrix{k_\ag}{0}{0}{k_\ngo}$. After acting on $\mu_2$ with $\minimatrix{\Id_\ag}{0}{0}{k_\ngo^{-1}}$ we may assume $k_\ngo = \Id_\ngo$. Hence, 
 $ \mu_1 = h \cdot \mu_2$ for $ h = \minimatrix{h_\ag}{0}{0}{\Id_\ngo} \in \Gl(\sg)$.
Finally, using $\Riccim_{\mu_1} = \Riccim_{\mu_2} = \Beta$, 
the fact that $\Beta|_\ag = c \cdot \Id_\ag$, see Proposition \ref{prop_solvsol}, and Lemma \ref{lem_solvsolmmsol}, (i), 
we get $h_\ag^t h_\ag = \Id_\ag$, from which it follows that $h\in \Or(\sg)$. 
\end{proof}

\section{Proof of Theorem \ref{mainthm_uniq}}\label{sec_thmA}

Before turning to the proof of Theorem \ref{mainthm_uniq}, we discuss here one of our main tools, an ODE on the space of brackets which is equivalent to the Ricci flow of left-invariant metrics.

It was shown in \cite{homRF}  that each Ricci flow solution of left-invariant metrics on $\Ss$ corresponds to a curve of brackets in $\Vs$ solving the \emph{bracket flow} $\mu'=-\pi(\Ricci_\mu)\mu$. Since the right-hand-side is always tangent to the $\Gs$-orbit, the flow preserves orbits. Hence if the initial bracket $\mu(0)$ belongs to a stratum $\Sb$, then also $\mu(t) \in \Sb$ for all $t$ for which the solution exists, since $\Sb$ is $\Gs$-invariant.
 Moreover, the flow can be `gauged' so that it consists of brackets which are gauged correctly w.r.t.~$\Beta$ (Def.~ \ref{def_gauged}). To that end, consider the subspace $\kg_{\ug_\Beta} = \{A - A^t : A\in \ug_\Beta \} \subset \sog(\sg)$, a direct complement for $\qg_\Beta$: 
\begin{equation}\label{eqn_qku}
	\glg(\sg) = \qg_\Beta \oplus \kg_{\ug_\Beta}\,.
\end{equation}
Denote by $(\cdot)_{\qg_\Beta}$ the corresponding linear projection onto $\qg_\Beta$.

\begin{remark}\label{rmk_projqbeta}
In general \eqref{eqn_qku} is not an orthogonal decomposition. We do have an orthogonal decomposition $\glgs = \ug_\Beta \oplus \ggo_\Beta \oplus \ug_\Beta^t$, and for $A = A_{\ug_\Beta} + A_{\ggo_\Beta} + A_{\ug_\Beta^t}\in \glgs$ 
 the projection according to \eqref{eqn_qku} is given by $A_{\qg_\Beta} = A_{\ggo_\Beta} + A_{\ug_\Beta}+(A_{\ug_\Beta^t} )^t$. In particular, for $A \in \Syms$
we have  $A_{\qg_\Beta} = A_{\ggo_\Beta} + 2\cdot A_{\ug_\Beta}$
and
 \[
   \Vert A \Vert \leq   \Vert A_{\qg_\Beta}\Vert \leq 2 \cdot \Vert A \Vert\,.
 \]
\end{remark}

Given a Ricci flow solution $(g(t))\subset \mca_{\mathsf{left}}(\Ss)$ with $g(0) = g_0$, we write $g_0$ at the point $e \in \Ss$ with respect to the fixed scalar product $\ip$ on $\sg \simeq T_e \Ss$, 
that is $(g_0)_e(\cdot,\cdot) = \la h_0 \, \cdot, h_0 \, \cdot \ra$, and we may assume that $h_0 \in \Qb$ by \eqref{eqn_GOQ}. We call the following ODE the \emph{gauged bracket flow}:
\begin{equation}\label{eqn_QbgaugedBF}
	\frac{\rm d  \mu}{ {\rm d} t} = - \pi \big( ( \Riccim_{\mu} )_{\qg_\Beta}  \big) \, \mu, \qquad \mu(0) = h_0\cdot \mu^\sg \in \Qb\cdot \mu^\sg .
\end{equation}

\begin{theorem}\cite{homRF,BL17}\label{thm_BFRFequiv}
Let $(\Ss, g_0)$ be a non-abelian, simply-connected Lie group with left-invariant metric $g_0$ and Lie algebra $(\sg, \mu^\sg)$, and consider a fixed scalar product $\ip$ on $\sg$. Then, the solution $(g(t))_{t \in [0,\infty)}$ to the Ricci flow starting at $g_0$ and the solution $(\mu(t))_{t \in [0,\infty)}$ to the gauged bracket flow \eqref{eqn_QbgaugedBF} starting at $h_0 \cdot \mu^\sg$
differ only by pull-back by time-dependent diffeomorphisms. Here, $h_0\in \Qb$ is such that $(g_0)_e(\cdot,\cdot) = \la h_0\, \cdot, h_0\,\cdot \ra$.
\end{theorem}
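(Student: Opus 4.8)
The plan is to establish the equivalence by showing that the gauged bracket flow \eqref{eqn_QbgaugedBF} differs from the (ungauged) bracket flow $\mu' = -\pi(\Ricci_\mu)\mu$ only by a time-dependent curve in $\Os$, together with the already-known equivalence (from \cite{homRF}) between the bracket flow and the Ricci flow up to time-dependent diffeomorphisms. Concretely, I would first recall that for a Ricci flow solution $(g(t))$ on $\Ss$ with $g(0)=g_0$, writing $(g(t))_e = \la h(t)\cdot,h(t)\cdot\ra$ on $\sg$, the curve $\tilde\mu(t) := h(t)\cdot\mu^\sg$ solves $\tilde\mu' = -\pi(\Ricci_{\tilde\mu})\tilde\mu$ and this $\tilde\mu(t)$ gives back $g(t)$ up to pull-back by diffeomorphisms; this is the content of \cite[homRF]{homRF}. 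Since $h_0\in\Qb$ can be arranged by \eqref{eqn_GOQ}, we get a valid initial bracket on the orbit.

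Next I would introduce the gauge transformation. One looks for a curve $k(t)\in\Os$, $k(0)=\Id$, such that $\mu(t) := k(t)\cdot\tilde\mu(t)$ satisfies \eqref{eqn_QbgaugedBF}. Differentiating and using the $\Gs$-action \eqref{eqn_Gsaction}, one computes
\begin{equation*}
	\mu'(t) = -\pi\big(k\Ricci_{\tilde\mu}k^{-1}\big)\mu + \pi\big(k' k^{-1}\big)\mu = -\pi\big(\Ricci_\mu - (\Ricci_\mu)_{\qg_\Beta}\big)\mu - \pi\big((\Ricci_\mu)_{\qg_\Beta}\big)\mu + \pi(k'k^{-1})\mu,
\end{equation*}
using $\Ricci_{k\cdot\tilde\mu} = k\,\Ricci_{\tilde\mu}\,k^{-1}$ (equivariance of the Ricci endomorphism, e.g.\ via \eqref{eqn_adconj} applied to \eqref{eqn_Ricmu} and \eqref{eqn_defmmb}). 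So it suffices to solve the ODE $k'(t) = \big(\Ricci_{\mu(t)} - (\Ricci_{\mu(t)})_{\qg_\Beta}\big)\,k(t)$ with $k(0)=\Id$. The point is that $\Ricci_\mu\in\Syms$, and for a symmetric endomorphism $A$, by Remark \ref{rmk_projqbeta} we have $A - A_{\qg_\Beta} = A_{\ug_\Beta^t} - (A_{\ug_\Beta^t})^t + \tfrac12\big((A_{\ug_\Beta})^t \cdot(\text{\dots})\big)$ — more carefully, $A_{\qg_\Beta} = A_{\ggo_\Beta} + 2A_{\ug_\Beta}$ gives $A - A_{\qg_\Beta} = A_{\ug_\Beta^t} - A_{\ug_\Beta} = (A_{\ug_\Beta})^t - A_{\ug_\Beta} \in\kg_{\ug_\Beta}\subset\sog(\sg)$. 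Hence the generator of the ODE lies in $\sog(\sg)$ at each time, so the solution $k(t)$ stays in $\Os$ as long as it exists, and exists as long as $\mu(t)$ does. Since the bracket flow exists for all $t\in[0,\infty)$ by \cite{scalar} (or \cite{BL17}), so does $k(t)$.

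Then I would check that $\mu(t)$ so defined actually satisfies the claimed initial condition and gauge property: $\mu(0) = k(0)\cdot\tilde\mu(0) = h_0\cdot\mu^\sg$ with $h_0\in\Qb$, and since $k(t)\in\Os$ acts orthogonally we have $\Ricci_{\mu(t)}$, hence $(\Ricci_{\mu(t)})_{\qg_\Beta}$, well-defined, and the right-hand side of \eqref{eqn_QbgaugedBF} is tangent to the $\Gs$-orbit, so $\mu(t)$ stays in the orbit $\Gs\cdot\mu^\sg$ (and in the stratum $\Sb$ containing $\mu(0)$, by $\Gs$-invariance of strata). Finally, since $\mu(t) = k(t)\cdot\tilde\mu(t)$ with $k(t)\in\Os$, the associated metrics $g_{\mu(t)}$ and $g_{\tilde\mu(t)}$ correspond to scalar products related by an orthogonal change of basis of $\sg$, hence are isometric; combined with the first step, $g_{\mu(t)}$ differs from $g(t)$ only by pull-back by time-dependent diffeomorphisms. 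The main obstacle is bookkeeping: verifying carefully that $\Ricci_\mu - (\Ricci_\mu)_{\qg_\Beta}$ is genuinely skew-symmetric (this is exactly where the structure of the decomposition \eqref{eqn_qku} and Remark \ref{rmk_projqbeta} are used), and tracking the cocycle relation $\Ricci_{k\cdot\mu} = k\Ricci_\mu k^{-1}$ through formula \eqref{eqn_Ricmu} — all routine but requiring care with transposes and the $\pi$-action. I expect no deep difficulty here; the statement is essentially a gauge-fixing of the already-established bracket-flow/Ricci-flow correspondence, with long-time existence imported from \cite{scalar,BL17}.
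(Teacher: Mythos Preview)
Your overall strategy---gauge the ungauged bracket flow by a curve $k(t)\in\Os$---is exactly the right idea and matches the paper's approach. But there is a genuine slip: the gauged bracket flow \eqref{eqn_QbgaugedBF} is driven by the \emph{modified} Ricci endomorphism $(\Riccim_\mu)_{\qg_\Beta}$, not by $(\Ricci_\mu)_{\qg_\Beta}$. With your ODE $k'k^{-1}=\Ricci_\mu-(\Ricci_\mu)_{\qg_\Beta}$ you do get a skew generator (your computation via Remark~\ref{rmk_projqbeta} is correct), but the resulting equation for $\mu$ is $\mu'=-\pi\big((\Ricci_\mu)_{\qg_\Beta}\big)\mu$, which is not \eqref{eqn_QbgaugedBF}.

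The missing piece is to handle the mean-curvature term. From \eqref{eqn_Ricmu} and \eqref{eqn_Ricmod},
\[
    \Ricci_\mu-\Riccim_\mu \;=\; -\tfrac12\big(\ad_\mu \mcv_\mu+(\ad_\mu \mcv_\mu)^t\big),
\]
and the key fact is that $\ad_\mu \mcv_\mu\in\Der(\sg,\mu)$, so $\pi(\ad_\mu \mcv_\mu)\mu=0$. Writing $\ad_\mu \mcv_\mu=S+K$ with $S\in\Syms$, $K\in\sog(\sg)$, one has $\Ricci_\mu-\Riccim_\mu=-S=K-\ad_\mu \mcv_\mu$. Hence
\[
    \Ricci_\mu-(\Riccim_\mu)_{\qg_\Beta}
    \;=\;\underbrace{\big(\Riccim_\mu-(\Riccim_\mu)_{\qg_\Beta}\big)+K}_{\in\,\sog(\sg)}\;-\;\underbrace{\ad_\mu \mcv_\mu}_{\in\,\Der(\mu)}.
\]
Taking $k'k^{-1}$ equal to the skew part above, the derivation part drops out of $\pi(\cdot)\mu$ and you land on \eqref{eqn_QbgaugedBF}. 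Equivalently (and this is closer to how the paper and \cite{BL17} proceed), you may start directly from the ``modified'' ungauged flow $\tilde\mu'=-\pi(\Riccim_{\tilde\mu})\tilde\mu$, which already differs from the Ricci flow only by pull-back by automorphisms for the same reason; then your gauge $k'k^{-1}=\Riccim_\mu-(\Riccim_\mu)_{\qg_\Beta}\in\kg_{\ug_\Beta}$ is skew on the nose. Either way, note that your $\Or(\sg)$-equivariance $\Ricci_{k\cdot\mu}=k\,\Ricci_\mu\,k^{-1}$ also holds for $\Riccim$, since both $\mm_\mu$ and $\kf_\mu$ are $\Or(\sg)$-equivariant.
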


The proof shows the existence of Lie group isomorphisms $\varphi_t : \Ss \to \Ss_{\mu(t)}$ such that $g(t) = \varphi_t^* \,  g_{\mu(t)}$. They can be obtained from the corresponding Lie algebra isomorphisms $h(t) : (\sg, \mu^\sg) \to (\sg, \mu(t))$, which satisfy the ODE
\begin{equation}\label{eqn_ODEh}
	h' =  - \big( {\Riccim_{\mu(t)} } \big)_{\qg_\Beta}  \cdot  h ,\qquad h(0) = h_0.
\end{equation}
The gauging is chosen so that for $\mu(0) \in \Vnnss$ we have $\mu(t) \in \Qb \cdot \mu^\sg \subset \Vnnss$ for all $t \in [0,\infty)$.

\begin{remark}\label{rmk_gbfequiv}
Unlike the bracket flow, the gauged bracket flow \eqref{eqn_QbgaugedBF} is not $\Os$-equivariant. However, it is still $\Kb$-equivariant: for $k\in \Kb$ one has that
\begin{align*}
	k \cdot \left(  -\pi \big( ( \Riccim_{\mu} )_{\qg_\Beta}  \big) \mu \right) =& -\pi \big( k ( \Riccim_{\mu} )_{\qg_\Beta}  k^{-1} \big) (k \cdot \mu) = - \pi \big(  (k \Riccim_\mu k^{-1})_{\qg_\Beta} \big) (k\cdot \mu) \\
	=&  -\pi \big(  (\Riccim_{k\cdot\mu})_{\qg_\Beta}\big) (k\cdot \mu).
\end{align*}
The second identity follows from Remark \ref{rmk_projqbeta}, since conjugation by $k\in \Kb$ preserves  $\ggo_\Beta$ and $\ug_\Beta$, thus for $A\in \Sym(\sg)$ we have that $k A_{\qg_\Beta} k^{-1} = (k A k_{-1})_{\qg_\Beta}$.
\end{remark}

Next, we  recall a scale-invariant Lyapunov function, 
which is monotone along immortal solutions to \eqref{eqn_QbgaugedBF}: 
see \cite[$\S$7]{BL17}.
Consider the following codimension-one subgroup of $\Qb$,
\[
    \Slb := \Hb \Ub \subset \Qb,
\]
and let $\slgb$ be its Lie algebra. Assume without loss of generality that $\mu^\sg \in \Vnnss$.  Since 
 $\Qb \cdot \mu^\sg$ is a cone over  $\Slb \cdot \mu^\sg$
by \cite{BL17}, for every $\mu \in \Qb \cdot \mu^\sg$ there exists a unique scalar $v_\Beta(\mu) \in \RR_{>0}$ such that
\[
    v_\Beta (\mu)  \, \mu \in \Slb \cdot \mu^\sg.
\] 
We call $v_\Beta$ the \emph{$\Beta$-volume functional}; notice that it depends on the `base bracket' $\mu^\sg$. It has the property that for some constant $C_{\mu^\sg} >0$ and for all $\mu \in \Qb\cdot \mu^\sg$ we have 
\begin{equation}\label{eqn_lowbdvbeta}
  v_\Beta(\mu) \geq C_{\mu^\sg} \cdot \Vert \mu\Vert^{-1}.
\end{equation}

\begin{theorem}\cite{BL17}\label{thm_lyapunov}
Let $(\mu(t))_{t\in [0,\infty)}$ be a solution to \eqref{eqn_QbgaugedBF} with $\mu(0) \in \Qb\cdot \mu^\sg $. Then,  
\[
    F_\beta : \Qb \cdot \mu^\sg \to \RR,  \qquad \mu \mapsto v_\Beta(\mu)^2 \cdot \scalm(\mu),
\]
is scale-invariant and evolves along \eqref{eqn_QbgaugedBF} by
\begin{equation}\label{eqn_Fbetanondec}
    \ddtbig \,  F_\Beta( \mu)\, = \, 
     2 \cdot v_\Beta(\mu)^2 \cdot    \Big( \left\Vert {\Riccim_{\mu} } \right\Vert^2   + \scalm({\mu}) \cdot  \langle \Riccim_\mu, \Beta \rangle \Big) \geq 0\,.
\end{equation}
Equality holds for some $t>0$ if and only if $\mu(0)$ is a solvsoliton. 
\end{theorem}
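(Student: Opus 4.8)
The plan is to verify, in this order: scale-invariance of $F_\Beta$; the evolution identity \eqref{eqn_Fbetanondec}; the sign $\ddtbig F_\Beta\geq 0$; and the equality case. Scale-invariance is immediate: acting by $c^{-1}\Id_\sg\in\exp(\RR\Id_\sg)\subset\Gb\subset\Qb$ rescales a bracket by $c$ through \eqref{eqn_Gsaction}, so $\Qb\cdot\mu^\sg$ is stable under scalings and the uniqueness in the definition of $v_\Beta$ forces $v_\Beta(c\mu)=c^{-1}v_\Beta(\mu)$; since the moment map $\mmm$ is $0$-homogeneous and $\kf_\bullet$ is $2$-homogeneous in the bracket, $\Riccim_\bullet$ and $\scalm(\cdot)$ are $2$-homogeneous and $F_\Beta=v_\Beta^2\,\scalm$ is $0$-homogeneous.

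For the evolution I would first record the general identity $\ddtbig\,\scalm(\mu)=2\langle\Riccim_\mu,E\rangle$ along \emph{any} bracket flow $\mu'=-\pi(E)\mu$: from $\tr\mm_\mu=-\tfrac14\Vert\mu\Vert^2$ and $\pi(\Id_\sg)=-\Id_\Vs$ one gets $\ddtbig\tr\mm_\mu=2\langle\mm_\mu,E\rangle$, while differentiating $\langle\kf_\mu X,Y\rangle=\tr(\ad_\mu X\,\ad_\mu Y)$ using $\ddtbig\ad_\mu(X)=-[E,\ad_\mu(X)]+\ad_\mu(EX)$ makes the commutator terms cancel under the trace and gives $\ddtbig\kf_\mu=\kf_\mu E+E^t\kf_\mu$, hence $\ddtbig\tr\kf_\mu=2\langle\kf_\mu,E\rangle$. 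For \eqref{eqn_QbgaugedBF} we take $E=(\Riccim_\mu)_{\qg_\Beta}$; by Remark \ref{rmk_projqbeta} this equals $(\Riccim_\mu)_{\ggo_\Beta}+2(\Riccim_\mu)_{\ug_\Beta}$ (as $\Riccim_\mu$ is symmetric), and since transposition is a Frobenius isometry exchanging the $\ug_\Beta$- and $\ug_\Beta^t$-components one gets $\langle\Riccim_\mu,(\Riccim_\mu)_{\qg_\Beta}\rangle=\Vert\Riccim_\mu\Vert^2$, i.e. $\ddtbig\,\scalm(\mu)=2\Vert\Riccim_\mu\Vert^2$. For $v_\Beta$ I would use that $\Qb=\exp(\RR\Beta)\cdot\Slb$ with $\exp(\RR\Beta)$ normalizing $\Slb$ (from $\Gb=\exp(\RR\Beta)\times\Hb$ and $[\Beta,\ug_\Beta]\subset\ug_\Beta$), and that $\Beta^+\in\ggo_\Beta$ is $\Beta$-orthogonal (using $\tr\Beta=-1$), hence $\Beta^+\in\hg_\Beta\subset\slgb$, whereas $\Beta\perp\slgb$. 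This yields the explicit formula $v_\Beta(\mu)=e^{-s\Vert\Beta\Vert^2}$ when $\mu=\exp(s\Beta)\cdot\nu$ with $\nu\in\Slb\cdot\mu^\sg$. Writing the solution $h(t)$ of \eqref{eqn_ODEh} in this form, $h'h^{-1}=-E\in\qg_\Beta$ together with $\Beta\perp\slgb$ gives $s'=-\Vert\Beta\Vert^{-2}\langle E,\Beta\rangle$, and since $\Beta\in\ggo_\Beta\perp\ug_\Beta$ we have $\langle E,\Beta\rangle=\langle\Riccim_\mu,\Beta\rangle$; thus $\ddtbig\log v_\Beta=\langle\Riccim_\mu,\Beta\rangle$. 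Combining gives $\ddtbig F_\Beta=2v_\Beta^2\bigl(\Vert\Riccim_\mu\Vert^2+\scalm(\mu)\langle\Riccim_\mu,\Beta\rangle\bigr)$.

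For the sign I would use that $\Beta\neq 0\neq\Beta^+$ are orthogonal and $\Id_\sg=\Vert\Beta\Vert^{-2}(\Beta^+-\Beta)\in\Span\{\Beta,\Beta^+\}$: writing $\Riccim_\mu=\tfrac{\langle\Riccim_\mu,\Beta\rangle}{\Vert\Beta\Vert^2}\Beta+\tfrac{\langle\Riccim_\mu,\Beta^+\rangle}{\Vert\Beta^+\Vert^2}\Beta^++R_\perp$ with $R_\perp\perp\Span\{\Beta,\Beta^+\}$ and using $\scalm(\mu)=\tr\Riccim_\mu=\Vert\Beta\Vert^{-2}(\langle\Riccim_\mu,\Beta^+\rangle-\langle\Riccim_\mu,\Beta\rangle)$, the bracketed quantity collapses to $\tfrac{\langle\Riccim_\mu,\Beta^+\rangle^2}{\Vert\Beta^+\Vert^2}+\tfrac{\langle\Riccim_\mu,\Beta\rangle\langle\Riccim_\mu,\Beta^+\rangle}{\Vert\Beta\Vert^2}+\Vert R_\perp\Vert^2$. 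For $\mu\in\Vnnss$ one has $\langle\mm_\mu,\Beta^+\rangle=\tfrac14\langle\pi(\Beta^+)\mu,\mu\rangle\geq 0$ and $\langle\mm_\mu,\Beta\rangle=\langle\mm_\mu,\Beta^+\rangle+\tfrac14\Vert\Beta\Vert^2\Vert\mu\Vert^2>0$, since $\pi(\Beta^+)\geq 0$ on $\Vnn$; after controlling the Killing-form contribution (supported on, and vanishing along, the nilradical) these give $\langle\Riccim_\mu,\Beta^+\rangle\geq 0$ and $\langle\Riccim_\mu,\Beta\rangle\geq 0$, so all three summands are $\geq 0$ and $\ddtbig F_\Beta\geq 0$. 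Equality forces $\langle\Riccim_\mu,\Beta^+\rangle=0$ and $R_\perp=0$, i.e. $\Riccim_{\mu(t_0)}\in\RR\Beta$; for a bracket gauged correctly this characterizes solvsolitons (by Proposition \ref{prop_solvsol} and the structure theory of \cite{solvsolitons, BL17}), and a correctly-gauged solvsoliton satisfies $\Beta^+\in\Der(\sg,\mu)$, so by $\pi(\Beta)=\pi(\Beta^+)+\Vert\Beta\Vert^2\Id_\Vs$ the gauged bracket flow \eqref{eqn_QbgaugedBF} through it is pure rescaling; hence $\mu(0)$ is a rescaling of $\mu(t_0)$ and thus a solvsoliton. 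Conversely, if $\mu(0)$ is a solvsoliton, then $\mu(t)$ is a correctly-gauged solvsoliton for all $t$, so $\Riccim_{\mu(t)}\in\RR\Beta$ and the bracketed quantity vanishes identically.

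The main obstacle is the rigorous treatment of the $\Beta$-volume functional: one must show that $\Qb\cdot\mu^\sg$ is genuinely a cone over $\Slb\cdot\mu^\sg$ carrying a well-defined, smooth ``$\Beta$-coordinate'' $s$ — in particular handling the ambiguity from $\Aut(\sg,\mu^\sg)$ — which underlies the identity $\ddtbig\log v_\Beta=\langle\Riccim_\mu,\Beta\rangle$. A secondary delicate point is the positivity $\langle\Riccim_\mu,\Beta\rangle\geq 0$ and $\langle\Riccim_\mu,\Beta^+\rangle\geq 0$ for gauged-correct brackets, for which one must isolate the Killing-form part of $\Riccim_\mu$.
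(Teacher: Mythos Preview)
Your approach is correct and aligns with what the paper does. Note that the paper itself does not give a proof of this theorem; it is quoted from \cite{BL17}, and the only argument the paper supplies is the one-line explanation after the statement: monotonicity ``follows from Cauchy--Schwarz and the estimate \eqref{eqn_ricbetapgeq0}''. Your decomposition of $\Riccim_\mu$ along $\Beta$, $\Beta^+$, and their orthogonal complement is exactly a reformulation of that hint: the inequality $\langle\Riccim_\mu,\Beta^+\rangle\geq 0$ is literally \eqref{eqn_ricbetapgeq0} rewritten (using $\Beta^+-\Beta=\Vert\Beta\Vert^2\Id_\sg$), and the vanishing of the cross term plus $R_\perp$ is precisely the equality case in Cauchy--Schwarz. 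So there is no genuine difference in strategy.

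Your evolution computation for $\scalm$ and $v_\Beta$ is more detailed than anything in the present paper and looks correct; the identity $\langle\Riccim_\mu,(\Riccim_\mu)_{\qg_\Beta}\rangle=\Vert\Riccim_\mu\Vert^2$ via Remark~\ref{rmk_projqbeta} is exactly what the paper uses implicitly when deriving \eqref{eqn_normalizedgaugedBF}. The two points you flag as delicate --- the cone structure of $\Qb\cdot\mu^\sg$ over $\Slb\cdot\mu^\sg$ (needed to make $v_\Beta$ well defined and to derive $\ddt\log v_\Beta=\langle\Riccim_\mu,\Beta\rangle$), and the control of the Killing-form contribution in $\langle\Riccim_\mu,\Beta^+\rangle\geq 0$ --- are precisely the inputs the paper imports from \cite{BL17} without proof. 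So you have correctly identified where the real work lies, and your sketch fills in everything the present paper actually argues.
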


The monotonicity of $F_\Beta$ follows from Cauchy-Schwarz and the estimate
\begin{equation}\label{eqn_ricbetapgeq0}
	\la \Riccim_\mu , \Beta \ra \geq \vert \scalm_\mu \vert \cdot \Vert \Beta\Vert^2,
\end{equation}
which holds on $\Qb\cdot \mu^\sg$. Moreover, even though $F_\Beta$ is defined only on one orbit, the rigidity statement also holds for all potential limits:

\begin{proposition}\cite{BL17}\label{prop_rigidity}
Let $\bar \mu \in \overline{\Qb \cdot \mu^\sg}$ with $\scalm(\bar \mu) = -1$. Then,  
\[
 \left\Vert {\Riccim_{\bar\mu} } \right\Vert^2   - \langle \Riccim_\mu, \Beta\rangle = 0
\]
  if and only if $\bar \mu$ is a solvsoliton with $\Riccim_{\bar \mu} = \Beta$ and $\bar \mu \in \sca_\Beta$.
\end{proposition}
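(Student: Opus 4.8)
\emph{Proof proposal.} The implication ``$\Leftarrow$'' is immediate: if $\bar\mu$ is a solvsoliton with $\Riccim_{\bar\mu}=\Beta$, then $\Vert\Riccim_{\bar\mu}\Vert^2=\Vert\Beta\Vert^2=\langle\Riccim_{\bar\mu},\Beta\rangle$. For ``$\Rightarrow$'' the plan has three steps: first upgrade the hypothesis to $\Riccim_{\bar\mu}=\Beta$, then show $\Beta^+\in\Der(\sg,\bar\mu)$, and finally identify $\bar\mu$ as a solvsoliton lying in $\sca_\Beta$.

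\emph{Step 1 (upgrade to $\Riccim_{\bar\mu}=\Beta$).} We may assume $\mu^\sg\in\Vnnss$, so that $\Qb\cdot\mu^\sg\subset\Vnnss\subset\Vnn$ and hence $\bar\mu\in\Vnn$, since $\Vnn$ is a closed linear subspace. As $\Riccim$ and $\scalm$ depend polynomially (in particular continuously) on the bracket, the estimate \eqref{eqn_ricbetapgeq0} passes to the limit, so using $\scalm(\bar\mu)=-1$ we get $\langle\Riccim_{\bar\mu},\Beta\rangle\ge\Vert\Beta\Vert^2$. On the other hand Cauchy--Schwarz together with the hypothesis gives $\Vert\Riccim_{\bar\mu}\Vert^2=\langle\Riccim_{\bar\mu},\Beta\rangle\le\Vert\Riccim_{\bar\mu}\Vert\,\Vert\Beta\Vert$, hence $\Vert\Riccim_{\bar\mu}\Vert\le\Vert\Beta\Vert$. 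Combining, $\Vert\Beta\Vert^2\le\langle\Riccim_{\bar\mu},\Beta\rangle=\Vert\Riccim_{\bar\mu}\Vert^2\le\Vert\Beta\Vert^2$, so all of these are equalities. In particular equality holds in Cauchy--Schwarz; since $\Beta\neq0$ (otherwise $\Riccim_{\bar\mu}=0$, contradicting $\tr\Riccim_{\bar\mu}=\scalm(\bar\mu)=-1$), this forces $\Riccim_{\bar\mu}=\Beta$. Note also that $\langle\Riccim_{\bar\mu},\Beta\rangle=|\scalm(\bar\mu)|\,\Vert\Beta\Vert^2$, i.e.\ equality holds in \eqref{eqn_ricbetapgeq0}.

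\emph{Step 2 ($\Beta^+$ is a derivation of $\bar\mu$).} From $\Riccim_{\bar\mu}=\Beta$ and \eqref{eqn_Ricmu}--\eqref{eqn_Ricmod} we get $\mm_{\bar\mu}=\Beta+\unm\kf_{\bar\mu}$, while $\tr\Beta=\scalm(\bar\mu)=-1$ yields $\langle\Beta,\Beta^+\rangle=\Vert\Beta\Vert^2(1+\tr\Beta)=0$. Using $\mm_{\bar\mu}=\tfrac14\Vert\bar\mu\Vert^2\,\mmm(\bar\mu)$ and \eqref{eqn_defmmb} one then computes
\[
   \langle\pi(\Beta^+)\bar\mu,\bar\mu\rangle=\Vert\bar\mu\Vert^2\langle\mmm(\bar\mu),\Beta^+\rangle=4\langle\mm_{\bar\mu},\Beta^+\rangle=4\langle\Beta,\Beta^+\rangle+2\langle\kf_{\bar\mu},\Beta^+\rangle=2\langle\kf_{\bar\mu},\Beta^+\rangle .
\]
Since $\bar\mu\in\Vnn=\bigoplus_{r\ge0}\Vr$, the left-hand side equals $\sum_{r\ge0}r\Vert\bar\mu_r\Vert^2\ge0$, where $\bar\mu=\sum_r\bar\mu_r$ is the decomposition into $\pi(\Beta^+)$-eigenspaces. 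On the other hand, the proof of \eqref{eqn_ricbetapgeq0} in \cite{BL17} shows that on $\Vnn$ the defect $\langle\Riccim_\mu,\Beta\rangle-|\scalm(\mu)|\Vert\Beta\Vert^2$ dominates $\tfrac14\langle\pi(\Beta^+)\mu,\mu\rangle\ge0$; hence the equality of Step 1 forces $\langle\pi(\Beta^+)\bar\mu,\bar\mu\rangle=0$, so $\bar\mu_r=0$ for $r>0$. Thus $\bar\mu\in\Vzero$, equivalently $\pi(\Beta^+)\bar\mu=0$, i.e.\ $\Beta^+\in\Der(\sg,\bar\mu)$.

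\emph{Step 3 ($\bar\mu$ is a solvsoliton in $\sca_\Beta$), and the main difficulty.} We now have $\Riccim_{\bar\mu}=\Beta=-\Vert\Beta\Vert^2\Id_\sg+\Beta^+$ with $\Beta^+\in\Der(\sg,\bar\mu)$ and $\Beta^+\ge0$ (a property of the stratum label, cf.\ Proposition \ref{prop_solvsol}); since a Lie algebra carrying a positive semidefinite derivation has the image of that derivation in its nilradical, $\Beta^+$ vanishes on $\ag:=\ngo^\perp$, where $\ngo$ is the nilradical of $\bar\mu$, so $\Beta|_\ag=-\Vert\Beta\Vert^2\Id_\ag$. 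Upgrading $\Riccim_{\bar\mu}=\Beta$ to the genuine soliton equation $\Ricci_{\bar\mu}=c\,\Id_\sg+D$ with $D\in\Der(\sg,\bar\mu)$ requires, via \eqref{eqn_Ricmu}, analysing the mean-curvature correction $\ad_{\bar\mu}\mcv_{\bar\mu}+(\ad_{\bar\mu}\mcv_{\bar\mu})^t$ by means of the structure of $\ngo$ and $\ag$ (cf.\ \eqref{eqn_properties}--\eqref{eqn_mmmuan}), as in \cite{solvsolitons}, \cite{BL17}; this structural step is the heart of the statement, and it is the main obstacle. Granting that $\bar\mu$ is a solvsoliton with $\Riccim_{\bar\mu}=\Beta$, the construction in the proof of Lemma \ref{lem_solvsolmmsol}(ii) --- which only uses this together with $\Beta^+\in\Der(\sg,\bar\mu)$ from Step 2 --- provides $h=\minimatrix{h_\ag}{0}{0}{\Id_\ngo}\in\Gl(\sg)$ with $h\cdot\bar\mu$ a critical point of $\normmm$ satisfying $\mmm(h\cdot\bar\mu)=\Beta$, hence $h\cdot\bar\mu\in\sca_\Beta$ by Theorem \ref{thm_stratifb}(iii) and therefore $\bar\mu\in\sca_\Beta$ by $\Gl(\sg)$-invariance of the strata. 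Alternatively, the whole proposition may be obtained from the rigidity in Theorem \ref{thm_lyapunov} by the boundary analysis of \cite[$\S$7]{BL17}.
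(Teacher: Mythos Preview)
The paper does not give a proof of this proposition: it is quoted from \cite{BL17} with a bare citation and no argument, so there is nothing to compare your attempt against line by line.

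On the merits of your attempt: Steps 1 and 2 are essentially correct and constitute the easy half of the argument. In Step 1 you correctly combine Cauchy--Schwarz with the limit of \eqref{eqn_ricbetapgeq0} to force $\Riccim_{\bar\mu}=\Beta$. In Step 2 the computation $\langle\pi(\Beta^+)\bar\mu,\bar\mu\rangle=4\langle\mm_{\bar\mu},\Beta^+\rangle=2\langle\kf_{\bar\mu},\Beta^+\rangle$ is fine, but to conclude that this vanishes you already need to invoke the internal structure of the proof of \eqref{eqn_ricbetapgeq0} in \cite{BL17}; nothing stated in the present paper gives that directly.

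Step 3 is where your proposal is incomplete, and you say so explicitly. The passage from $\Riccim_{\bar\mu}=\Beta$ with $\Beta^+\in\Der(\bar\mu)$ to the genuine solvsoliton equation $\Ricci_{\bar\mu}=c\,\Id_\sg+D$ requires controlling the mean-curvature correction $\unm(\ad_{\bar\mu}\mcv_{\bar\mu}+(\ad_{\bar\mu}\mcv_{\bar\mu})^t)$ in \eqref{eqn_Ricmu}, and the structural facts \eqref{eqn_properties}--\eqref{eqn_mmmuan} you want to use are stated in this paper only for brackets already known to be solvsolitons, so invoking them here is circular. This upgrade is precisely the content the paper outsources to \cite{BL17}; your closing sentence, deferring to \cite[\S7]{BL17}, is therefore not an ``alternative'' but the same move the paper makes.

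In short: your sketch is sound up to the point where the real work begins, and both you and the paper hand that work off to \cite{BL17}.
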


We are now in a position to prove the main result of the article.

\begin{proof}[Proof of Theorem \ref{mainthm_uniq}]
Let $(\Ss,g_0)$ be a solvmanifold of real type, with Lie algebra $(\sg, \mu^\sg)$. By Theorem \ref{thm_BFRFequiv},  since $\mu^\sg \neq 0$, to the Ricci flow solution $g(t)$ with $g(0) = g_0$ there corresponds a solution $\mu(t)$ to the gauged bracket flow \eqref{eqn_QbgaugedBF}.  Recall that $\mu(t) \in \Qb \cdot \mu^\sg$ for all $t\geq 0$, and that $\Qb\cdot \mu^\sg \subset \sca_\Beta$ for some $\Beta$.

For non-compact homogeneous spaces there exists a normalized bracket 
flow keeping the modified scalar curvature $\scalm$ constant. More precisely,  since  $\scalm(\mu(t)) < 0$ for all $t\geq 0$, as explained in \cite[$\S$3.3]{homRF}, after an appropriate time reparameterization the corresponding $\scalm$-normalized family $\nu(t) := \vert {\scalm ({\mu(t)}) } \vert^{-1/2} \cdot \mu(t)$ solves
\begin{equation}\label{eqn_normalizedgaugedBF}
  \frac{\rm d  \nu}{ {\rm d} t} = - \pi \Big( ( \Riccim_{\nu} )_{\qg_\Beta}  + \Vert{ \Riccim_\nu}\Vert^2 \cdot \Id_\sg \Big) \, { \nu}, \qquad { \nu}(0) = \nu_0 \in \Qb\cdot \mu^\sg \,.
\end{equation}
To this end, recall that by \cite{homRF} 
 we have ${\rm d} \scalm |_\mu (\pi(A)\mu) = - 2 \cdot \la\Riccim_\mu, A\ra$.
 Thus $\scalm(\nu(t)) \equiv -1$,  since
$ ( \Riccim_{\nu} )_{\qg_\Beta}= \Riccim_{\nu} - (\Riccim_{\nu} )_{ \kg_{\ug_\Beta}}$
and $\Riccim_\nu  \perp ( \Riccim_{\nu} )_{ \kg_{\ug_\Beta}}$, see \eqref{eqn_qku}.

We first show that there exist $c_{\mu_0}, C_{\mu_0} > 0$  such that for all $t\geq 0$ it holds
\begin{equation}\label{eqn_nubounded}
	0 < c_{\mu_0} \leq \Vert \nu(t) \Vert \leq C_{\mu_0}\,.
\end{equation}
The existence of $c_{\mu_0}$ is clear since $\scalm(0)=0$. 
On the other hand, if some subsequence $\nu(t_k)$ is unbounded, then 
the sequence $\tilde \nu_k := \nu(t_k) / \Vert \nu(t_k) \Vert$ satisfies
 $\scalm(\tilde \nu_k) \to 0$ as $k\to\infty$. A subsequential limit would then contradict the real type hypothesis: see Lemma \ref{lem_realnonflat}. 

Next, we claim that the $\omega$-limit of the solution $(\nu(t))_{t\in [0,\infty)}$ consists entirely of solvsoliton brackets lying in the same stratum $\sca_\Beta$.  This will follow from Proposition \ref{prop_rigidity}, once we show that 
the non-negative function 
$f(t):= \Vert {\Riccim_{\nu(t)} }\Vert^2 - \langle \Riccim_{\nu(t)}, \Beta\rangle$ tends to $0$ as $t\to \infty$. To see that, notice first that by scale-invariance of the Lyapunov function $F_\Beta$  (Theorem \ref{thm_lyapunov}) 
we have that $(\grad F_\Beta)_\nu \perp \RR \,\nu$.
Hence,
 along the normalized bracket flow \eqref{eqn_normalizedgaugedBF} $F_\Beta$ satisfies the same evolution equation \eqref{eqn_Fbetanondec}. Together with \eqref{eqn_nubounded} and \eqref{eqn_lowbdvbeta} this implies
\[
	\ddt F_\Beta(\nu) \geq C'_{\mu_0} \cdot f(t) \geq 0,
\]
for some constant $C'_{\mu_0} >0$. Since $F_\Beta$ is monotone non-decreasing and 
$F_\Beta(\nu(t)) < 0$ for all $t\geq 0$, it follows that
$\int_0^\infty f(t) \,   dt  <\infty$. 
On the other hand, notice that with respect to a fixed orthonormal basis of $V(\sg)$
the entries of $\ddt \nu$ are polynomials in the entries of $\nu$. By using again the upper bound in \eqref{eqn_nubounded} we deduce that $f'(t)\leq D_{\mu_0}$ 
for all $t \geq 0$. It is now clear that $ \lim_{t\to\infty} f(t) = 0$, which proves our claim.

Applying Theorem \ref{thm_uniqsolvsol} we conclude that the $\omega$-limit is contained in a single $\Os$-orbit of non-flat solvsolitons $\Os \cdot \nu_{\mathsf{sol}}$. From this we deduce that it is equivalent to normalize the scalar curvature. Indeed, by $\Os$-invariance of $\scal$, we have
$0 > s_\infty = \lim_{t\to \infty} \scal(\nu(t)) $. Hence, the $\omega$-limit of the  $\scal$-normalized bracket family 
\begin{eqnarray}
  \big(\vert \scal(\mu(t))\vert^{-1/2} \, \mu(t) \big)_{t\in [0,\infty)}=
   \big(\vert \scal(\nu(t)) \vert^{-1/2} \, \nu(t) \big)_{t\in[0,\infty)}\label{eqn_normconvergence}
 \end{eqnarray}
 is contained in $\Or(\sg) \cdot  \tilde\nu_{\mathsf{sol}}$, where $ \tilde \nu_{\mathsf{sol}} := \vert  s_\infty \vert^{-1/2} \, \nu_{\mathsf{sol}}$.

The bracket $ \tilde \nu_{\mathsf{sol}}$ corresponds to a solvmanifold $(\bar \Ss, \bgsol)$, 
 which by Theorem \ref{thm_uniqsolvsol} does not depend (up to isometry) on the initial metric $g_0$. By \cite[Corollary 6.20]{Lauret2012}, bracket convergence implies Cheeger-Gromov subconvergence to a space locally isometric to 
$(\bar \Ss, \bgsol)$.
 If we set $\bar g(t) := \vert \scal(t)\vert \cdot g(t)$, this says that any sequence $(\Ss, \bar g(t_k))_{k\in\NN}$, $t_k\to\infty$, has a subsequence converging in Cheeger-Gromov topology to a Riemannian manifold locally isometric to $(\bar \Ss, \bgsol)$. 
 By Theorem D.2 in \cite{BL17}, the limit is in fact simply-connected, and hence equal to $(\bar\Ss,\bgsol)$, as claimed.
\end{proof}

\begin{remark}\label{rmk_limitsoliton}
The above proof shows that in fact the $\scal$-normalized Ricci flow converges to the unique soliton whose bracket lies in $\overline{\Gl(\sg)\cdot \mu} \cap \sca_\Beta$, see Theorem \ref{thm_uniqsolvsol}.
\end{remark}

\section{No algebraic collapsing}\label{sec_nocollapse}

Let $(M^n,g_k)_{k\in \NN}$ be a sequence of Riemannian metrics converging in 
pointed Cheeger-Gromov topology to a limit space $\big(\bar M^n, \bar g\big)$. Assume that for each $k \in \NN$ there is a  connected, $n$-dimensional Lie group $\G_k$ of $g_k$-isometries acting transitively on $M^n$. A natural way of obtaining an isometric group action in the limit is by arguing at the infinitesimal level, as follows: for each $k\in \NN$ one considers $n$ linearly independent $g_k$-Killing fields, which after a suitable normalization subconverge in $C^1$-topology to $n$ linearly independent $\bar g$-Killing fields on $\bar M^n$; see \cite[$\S$6.2]{Heber1998} and \cite[$\S$9]{BL17}. The sequence $(M^n, g_k)_{k\in \NN}$ is called \emph{algebraically non-collapsed}, if the $n$ limit Killing fields span the tangent space at each point of $\bar M^n$. Notice that if this is the case, then after lifting them to the universal cover $(X^n, \bar g)$ of $(\bar M^n,\bar g)$, they can be `integrated' to a simply-transitive, $\bar g$-isometric action 
of a simply-connected Lie group $\bar \G$ on $X^n$  \cite[Ch.VI, Thm.3.4]{KobNom96}.

\begin{definition}\label{def_homRFalgnonc}
  An immortal homogeneous Ricci flow solution $(M^n,g(t))_{t\in[0,\infty)}$ is called \emph{algebraically non-collapsed}, if any Cheeger-Gromov-convergent sequence of parabolic blow-downs 
\[
 g_{s_k}(t) := \tfrac{1}{s_k} \cdot g(s_k \, t)\,, \qquad s_k \to \infty, 
\]
is algebraically non-collapsed in the above sense. 
\end{definition}

We now work towards a proof of Theorem \ref{mainthm_algnonc}. Let $(\Ss, g(t))_{t\in [0,\infty)}$ be an immortal homogeneous Ricci flow solution of left-invariant metrics on a simply-connected solvable Lie group $\Ss$.  Recall that $\Ss$ is diffeomorphic to $\RR^n$.
Consider the associated bracket flow solution $(\mu(t))_{t\in[0,\infty)}$, $\mu(0) = \mu_0$ and recall that for a blown-down solution $\tfrac{1}{s} \cdot g(s t)$, $s>0$, the corresponding brackets $\mu_s(t)$ scale like
\begin{equation}\label{eqn_bracketblowdown}
    \Vert \mu_s(1) \Vert = \sqrt{s} \cdot \Vert \mu( s)\Vert.
\end{equation}
By  \cite{Bhm2014} the solution is Type-III, and we have injectivity radius estimates
 by \cite[Thm.~ 8.2]{BL17}.  
 Hence, Hamilton's compactness theorem \cite{Ham95b} 
implies that any sequence of blow-downs $\big(\Ss, (g_{s_k}(t))_{t\in[1,\infty)}  \big)_{k\in\NN}$ subconverges to a limit Ricci flow solution  in Cheeger-Gromov topology, uniformly over compact subsets of $\Ss \times [1,\infty)$.
We claim that this limit Ricci flow solution may be written as $\big(\bar \Ss, \bar g(t)_{t\in [1,\infty)} \big)$,
where $\bar \Ss$ is a simply-connected solvable Lie group, in general  not isomorphic to $\Ss$.

To that end, notice that by Theorem D.2 in \cite{BL17}  the limit is simply connected. Thus we are in a position to apply the results in \cite[$\S$6]{Heber1998} and conclude that there is a solvable Lie group of isometries acting transitively on $(\bar M^n,\bar  g)$. More precisely, one may use items (i), (ii) of Step 1 in the proof of Theorem 6.6 from that paper. A quick inspection of the proof shows that the Einstein hypothesis is not used at all for these items, and indeed all that is needed is that the limit space is simply-connected.   
By \cite[Lemma 1.2]{GrdWls}, there is also a simply-transitive solvable group of isometries, hence the limit is a solvmanifold.

\begin{lemma}\label{lem_varphibdd}
Suppose that the solution $(g(t))_{t \in [0,\infty)}$ is algebraically non-collapsed. 
Then, there exists $0< c_{\mu_0}< C_{\mu_0}$ such that $c_{\mu_0} \leq t \cdot \Vert \mu(t)\Vert^2 \leq C_{\mu_0}$ for all $t\geq 1$.
\end{lemma}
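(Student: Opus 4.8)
The plan is to exploit scale-invariance and the convergence results of the previous sections to get two-sided control on $t\cdot\Vert\mu(t)\Vert^2$. First I would pass to the $\scal$-normalized (equivalently $\scalm$-normalized) family $\nu(t)=\vert\scalm(\mu(t))\vert^{-1/2}\mu(t)$, which by \eqref{eqn_nubounded} satisfies $0<c_{\mu_0}\le\Vert\nu(t)\Vert\le C_{\mu_0}$ for all $t$. Hence $t\cdot\Vert\mu(t)\Vert^2$ is comparable to $t\cdot\vert\scalm(\mu(t))\vert$, and it suffices to bound the latter above and below by positive constants for $t\ge 1$. Equivalently, writing $\bar g(t)=\vert\scal(t)\vert\cdot g(t)$ for the normalized Ricci flow, one wants that the \emph{unnormalized} scale factor $\vert\scal(t)\vert^{-1}$, which up to a bounded factor equals $t\cdot(t\vert\scal(t)\vert)^{-1}$... more directly: the parabolic blow-down $g_s(t)=\tfrac1s g(st)$ at time $t=1$ is $g_s(1)=\tfrac1s g(s)$, and by \eqref{eqn_bracketblowdown} the associated bracket has norm $\sqrt s\cdot\Vert\mu(s)\Vert$. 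So the statement $c_{\mu_0}\le s\Vert\mu(s)\Vert^2\le C_{\mu_0}$ is precisely the statement that the brackets of the blown-down solutions at time $1$ stay in a fixed compact set bounded away from $0$.

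The lower bound $s\Vert\mu(s)\Vert^2\ge c_{\mu_0}$ should be the easier half: if $s_k\Vert\mu(s_k)\Vert^2\to 0$ along some sequence, then the blow-downs $g_{s_k}$ have brackets at time $1$ converging to $0$, i.e.\ the limit bracket is the abelian bracket, so the blow-down limit is flat $\RR^n$; but then the limit space is $\RR^n$ with its flat metric and the $n$ limit Killing fields span the tangent space at each point, which would make the sequence algebraically \emph{non}-collapsed but with abelian limit bracket --- and one argues this contradicts the real type hypothesis. More precisely, by Lemma \ref{lem_realnonflat} there are no non-zero flat brackets in $\overline{\Gl(\sg)\cdot\mu_0}$, and the bracket $0$ corresponds to the flat abelian limit; so a blow-down limit with bracket $0$ is possible a priori only through collapse, but algebraic non-collapse forces the $n$ limit Killing fields to span, and for a solvable Lie group of real type this is incompatible with the bracket degenerating to $0$ --- I would spell this out using the identification of the algebraic limit group $\bar\G$ with the limit bracket, as in \cite[$\S$9]{BL17}.

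For the upper bound $s\Vert\mu(s)\Vert^2\le C_{\mu_0}$, suppose instead $s_k\Vert\mu(s_k)\Vert^2\to\infty$. Then the blow-downs $g_{s_k}$ have brackets at time $1$ with norm tending to $\infty$. The key point is that a Cheeger-Gromov convergent sequence of blow-downs cannot have unbounded brackets: by the injectivity radius and Type-III estimates cited before Lemma \ref{lem_varphibdd}, Hamilton's compactness theorem gives subconvergence of the blow-downs to a limit homogeneous Ricci flow, and on the bracket side the $\scalm$-normalized brackets stay bounded by \eqref{eqn_nubounded}; thus $\Vert\mu_{s_k}(1)\Vert\to\infty$ forces $\vert\scalm(\mu_{s_k}(1))\vert\to\infty$, which after rescaling means the normalized brackets $\tilde\nu_k=\mu_{s_k}(1)/\Vert\mu_{s_k}(1)\Vert$ satisfy $\scalm(\tilde\nu_k)\to 0$, and any subsequential limit $\tilde\nu_\infty$ is a non-zero bracket in $\overline{\Gl(\sg)\cdot\mu_0}$ with $\scalm(\tilde\nu_\infty)=0$. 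By Lemmas 3.5 and 3.6 of \cite{BL17}, $\scalm<0$ on non-flat solvmanifold brackets, so $\tilde\nu_\infty$ must be flat (hence $0$ by Milnor/Lemma \ref{lem_realnonflat}), contradicting $\Vert\tilde\nu_\infty\Vert=1$. (One subtlety: one must make sure the gauge doesn't interfere, but since $\Vert\mu_{s_k}(1)\Vert$ is gauge-independent up to $\Os$ and $\Vs$-norms are $\Os$-invariant, this is harmless.)

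The main obstacle I expect is making the lower bound argument airtight --- i.e.\ correctly translating "the blow-down limit has abelian bracket" into a violation of algebraic non-collapse. The naive worry is that an abelian limit bracket on $\RR^n$ gives the flat metric on $\RR^n$, which \emph{is} genuinely $n$-dimensionally homogeneous, so it looks algebraically non-collapsed. The resolution must be that for a solvmanifold of real type the blow-down \emph{cannot} limit to the flat bracket at all --- this is essentially Remark \ref{rmk_limitsoliton} together with Theorem \ref{mainthm_uniq}: the normalized flow converges to a \emph{non-flat} solvsoliton, so $\vert\scal\vert$ is bounded below along the normalized flow, which is exactly the lower bound we want. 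So in fact I would derive the lower bound directly from Theorem \ref{mainthm_uniq} / Remark \ref{rmk_limitsoliton} rather than from a collapse argument, and reserve the algebraic non-collapse hypothesis for the upper bound, where it is what rules out the normalized brackets running off to infinity via a contradiction with Lemma \ref{lem_realnonflat} exactly as sketched above.
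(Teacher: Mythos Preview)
Your proposal has a circularity problem. Lemma \ref{lem_varphibdd} lives in Section \ref{sec_nocollapse}, where the standing assumption is only that $\Ss$ is a simply-connected, non-abelian solvable Lie group; the real-type hypothesis is \emph{not} available. In fact, the lemma is used (via Lemmas \ref{lem_Riccismall} and \ref{lem_Riclowerbd}) precisely to prove the direction ``algebraically non-collapsed $\Rightarrow$ real type'' of Theorem \ref{mainthm_algnonc}. So you cannot appeal to \eqref{eqn_nubounded} (whose upper bound uses Lemma \ref{lem_realnonflat}), to Lemma \ref{lem_realnonflat}, or to Theorem \ref{mainthm_uniq}/Remark \ref{rmk_limitsoliton}; all of these require $\Ss$ to be of real type, which is exactly what remains to be shown.

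The paper's argument avoids this entirely. For the upper bound, you had the right key sentence---``a Cheeger-Gromov convergent sequence of blow-downs cannot have unbounded brackets''---but the correct justification is a direct citation of \cite[Thm.~9.2]{BL17}: algebraic non-collapsedness forces the associated brackets to stay bounded, which together with \eqref{eqn_bracketblowdown} gives $s_k\Vert\mu(s_k)\Vert^2\le C_{\mu_0}$. No flatness or real-type reasoning is needed. For the lower bound, the paper gives a completely elementary ODE estimate valid for \emph{any} bracket flow: the vector field $\mu\mapsto -\pi(A_\mu)\mu$ is homogeneous of degree $3$, so $\Vert\pi(A_\mu)\mu\Vert\le C\Vert\mu\Vert^3$, whence $\ddt\Vert\mu\Vert^2\ge -2C\Vert\mu\Vert^4$ and integration yields $\Vert\mu(t)\Vert^2\ge (2Ct+\Vert\mu_0\Vert^{-2})^{-1}$. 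This uses neither real type nor non-collapsedness, which is why your attempt to extract the lower bound from convergence to a non-flat solvsoliton was both unnecessary and logically forbidden here.
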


\begin{proof}
To prove the upper bound, assume on the contrary that $s_k \cdot \Vert \mu(s_k) \Vert^2 \to +\infty$ for some sequence $s_k \to \infty$. After extracting a convergent subsequence of blow-downs and using the algebraic non-collapsedness, we may apply \cite[Thm.~ 9.2]{BL17} and conclude that the corresponding brackets are bounded.  This is a contradiction: see \eqref{eqn_bracketblowdown}.

The lower bound holds even without the non-collapsedness assumption. To see that, notice that the vector field defining the bracket flow, $\mu \mapsto -\pi(A_\mu) \mu$, can be extended to a smooth vector field on $\Vs$, which is homogeneous of degree $3$. By compactness of the sphere in $\Vs$ we conclude that there is a uniform bound 
 $  \Vert \pi(A_\mu) \mu \Vert \leq C \cdot \Vert \mu \Vert^3$ for some
 $ C>0$, see also \cite[$\S$3]{scalar}. This implies that
$  \ddt \Vert \mu\Vert^2 = 2 \, \big\la \mu, \ddt \mu \big\ra \geq -  2\, C \cdot \Vert \mu \Vert^4$,
which by integrating yields 
$   \Vert \mu(t) \Vert^2 \geq  1/2 \, C \, t + \Vert \mu_0 \Vert^{-2}$
for all $t\geq0$. The desired lower bound for $t \geq 1$ now follows.
\end{proof}

Let $C_n$ denote the norm of the linear map  $\pi : \glgs \to \End(\Vs)$ defined in \eqref{eqn_gsrep}.
The next lemma says that the Ricci curvature cannot be too small for a very long time
in the algebraically non-collapsed case.

\begin{lemma}\label{lem_Riccismall}
Suppose that the solution $(g(t))_{t \in [1,\infty)}$ is algebraically non-collapsed. 
Then, there exists $\alpha_{\mu_0}>0$ such that if $t \cdot \Vert {\Ricci_{\mu(t)} }\Vert \leq 
\frac{1}{8C_n }$  holds for all $t\in [t_1, t_2]$, then $t_2 \leq \alpha_{\mu_0} \cdot t_1$.
\end{lemma}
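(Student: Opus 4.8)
The statement says that along an algebraically non-collapsed solution, the Ricci tensor cannot stay small (below the threshold $\tfrac{1}{8C_n}/t$) for a time interval of relative length greater than a fixed constant $\alpha_{\mu_0}$. The natural strategy is to combine Lemma \ref{lem_varphibdd} — which gives $c_{\mu_0} \le t \,\Vert \mu(t)\Vert^2 \le C_{\mu_0}$ for $t \ge 1$ — with a differential inequality for $\Vert \mu(t)\Vert^2$ coming from the bracket flow, estimating the right-hand side in terms of $\Vert \Ricci_{\mu(t)}\Vert$.

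\textbf{Step 1: A differential inequality for $\Vert\mu\Vert^2$ when Ricci is small.} Along the (unnormalized) bracket flow $\mu' = -\pi(\Ricci_\mu)\mu$ we have
\[
  \ddtbig \Vert \mu\Vert^2 = 2\,\la \mu, -\pi(\Ricci_\mu)\mu\ra = -2\,\la \pi(\Ricci_\mu)\mu,\mu\ra = -\tfrac12 \la \mmm(\mu),\Ricci_\mu\ra \Vert\mu\Vert^2,
\]
using \eqref{eqn_defmmb}; more crudely, $\big| \ddtbig \Vert\mu\Vert^2 \big| \le 2\,\Vert \pi(\Ricci_\mu)\mu\Vert\,\Vert\mu\Vert \le 2 C_n \Vert \Ricci_\mu\Vert \,\Vert\mu\Vert^2$. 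Hence on any interval $[t_1,t_2]$ on which $t\,\Vert \Ricci_{\mu(t)}\Vert \le \tfrac{1}{8C_n}$ holds, we get $\big| \ddtbig \log\Vert\mu\Vert^2 \big| \le 2 C_n \Vert \Ricci_{\mu(t)}\Vert \le \tfrac{1}{4t}$.

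\textbf{Step 2: Integrate and compare with the two-sided bound.} Integrating the inequality from Step 1 over $[t_1,t_2]$ yields
\[
  \big| \log\Vert\mu(t_2)\Vert^2 - \log\Vert\mu(t_1)\Vert^2 \big| \le \tfrac14 \log(t_2/t_1),
\]
i.e. $\Vert\mu(t_2)\Vert^2 \ge (t_1/t_2)^{1/4}\,\Vert\mu(t_1)\Vert^2$. On the other hand, Lemma \ref{lem_varphibdd} gives $\Vert \mu(t_1)\Vert^2 \ge c_{\mu_0}/t_1$ and $\Vert \mu(t_2)\Vert^2 \le C_{\mu_0}/t_2$. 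Combining,
\[
  \frac{C_{\mu_0}}{t_2} \ge \Vert\mu(t_2)\Vert^2 \ge \Big(\frac{t_1}{t_2}\Big)^{1/4}\,\frac{c_{\mu_0}}{t_1},
\]
which after rearranging reads $C_{\mu_0}\, t_1 \ge c_{\mu_0}\, t_1^{3/4} t_2^{1/4}$, hence $(t_2/t_1)^{1/4} \le C_{\mu_0}/c_{\mu_0}$, i.e. $t_2 \le (C_{\mu_0}/c_{\mu_0})^4 \cdot t_1$. Setting $\alpha_{\mu_0} := (C_{\mu_0}/c_{\mu_0})^4$ completes the argument.

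\textbf{Main obstacle.} The analytic core — a Grönwall-type integration — is routine once one has the constant $C_n$ controlling $\Vert\pi(\Ricci_\mu)\mu\Vert / \Vert\mu\Vert^2$ and the two-sided bound of Lemma \ref{lem_varphibdd}. The only subtlety is bookkeeping: one must be careful that the Ricci curvature $\Ricci_{\mu(t)}$ appearing here is the endomorphism whose norm is being bounded (as opposed to the modified Ricci $\Riccim$), and that the threshold $\tfrac{1}{8C_n}$ is chosen precisely so that the exponent in Step 2 is a fixed fraction strictly less than $1$ — any such choice works, and the factor $8$ versus, say, $4$ only affects the final constant. No deep new input beyond Lemma \ref{lem_varphibdd} is needed; in particular the algebraic non-collapsedness hypothesis enters only through that lemma (the upper bound $t\,\Vert\mu(t)\Vert^2 \le C_{\mu_0}$).
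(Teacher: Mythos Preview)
Your argument is correct and follows essentially the same strategy as the paper: a Gr\"onwall-type estimate on the growth of $\Vert\mu(t)\Vert^2$ under the Ricci-smallness hypothesis, combined with the two-sided bound $c_{\mu_0}\le t\Vert\mu(t)\Vert^2\le C_{\mu_0}$ from Lemma~\ref{lem_varphibdd}. The paper packages this slightly differently, differentiating $\varphi(t)=t\Vert\mu(t)\Vert^2$ directly to obtain $\varphi'\ge \tfrac{1}{2t}\varphi$ and hence $\varphi(t_2)/\varphi(t_1)\ge\sqrt{t_2/t_1}$, but the content is the same. One harmless algebra slip: your rearrangement in Step~2 should read $C_{\mu_0}\,t_1^{3/4}\ge c_{\mu_0}\,t_2^{3/4}$, so $(t_2/t_1)^{3/4}\le C_{\mu_0}/c_{\mu_0}$ and $\alpha_{\mu_0}=(C_{\mu_0}/c_{\mu_0})^{4/3}$ rather than $(C_{\mu_0}/c_{\mu_0})^4$; this does not affect the conclusion.
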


\begin{proof}
By Lemma \ref{lem_varphibdd}, the function $\varphi:[1,\infty)\to \RR\,;\,\,t \mapsto
 t \cdot \Vert \mu(t) \Vert^2$  is bounded. Moreover, if $A_\mu := (\Riccim_\mu)_{\qg_\Beta}$ then by Cauchy-Schwarz and Remark \ref{rmk_projqbeta} we have
\[
    t \cdot \la \mu , \pi(A_\mu) \mu \ra 
    \leq C_n \cdot t \cdot \Vert \mu \Vert^2  \, \Vert A_\mu \Vert  
    \leq 2 \, C_n \cdot t \cdot \Vert \Riccim_\mu \Vert  \, \Vert \mu\Vert^2   
    \leq 2 \, C_n \cdot t \cdot  \Vert \Ricci_\mu \Vert \,  \Vert \mu \Vert^2. 
\]
Using that, for $t\in [t_1,t_2]$ we obtain
\begin{align*}
  \ddt \varphi &= \Vert \mu \Vert^2 + 2 \, t \cdot \la \mu, \ddt \mu \ra  = \Vert \mu\Vert^2 - 2 \, t \cdot \la \mu , \pi(A_\mu) \mu \ra\\
        & \geq  \Vert \mu\Vert^2 - 4 \, C_n \cdot t \cdot \Vert \Ricci_\mu \Vert \, \Vert \mu \Vert^2  \geq \unm \Vert  \mu\Vert^2 = \tfrac{1}{2\,t} \cdot \varphi.
\end{align*}
Integrating on $[t_1,t_2]$ one gets $\varphi(t_2) /\varphi(t_1)  \geq  \sqrt { {t_2}/{t_1}}$,
and the lemma follows.
\end{proof}

We now show that the blow-down limits cannot be flat.

\begin{lemma}\label{lem_Riclowerbd}
Suppose that the solution $(g(t))_{t \in [1,\infty)}$ is algebraically non-collapsed. 
Then, there exists $\delta_{\mu_0} > 0$ such that $t \cdot \Vert{ \Ricci_{\mu(t)} }\Vert \geq \delta_{\mu_0}$ for all $t\geq 1$.
\end{lemma}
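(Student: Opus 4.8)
The plan is to argue by contradiction, combining the previous two lemmas with the uniqueness/rigidity results from Sections \ref{sec_uniq} and \ref{sec_thmA}. Suppose there is a sequence $s_k \to \infty$ with $s_k \cdot \Vert \Ricci_{\mu(s_k)} \Vert \to 0$. Consider the blow-down sequence $g_{s_k}(t) = \tfrac{1}{s_k} g(s_k t)$ and the corresponding brackets $\mu_{s_k}(t)$; by Lemma \ref{lem_varphibdd} these are bounded, so after passing to a subsequence they converge (uniformly on compact subsets of $(0,\infty)$, using Lemma \ref{lem_varphibdd} for the lower bound which keeps them bounded away from $0$ on $[1,\infty)$ say) to a non-zero limiting bracket flow solution $(\bar\mu(t))$. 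By algebraic non-collapsedness the limit is again a solvmanifold of the same dimension, and since $\mu_0$ is of real type, the limit bracket stays in the same stratum $\sca_\Beta$ and cannot be flat (Lemma \ref{lem_realnonflat}); in particular $\bar\mu(t) \neq 0$ for all $t$.

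Next I would extract the contradiction from $s_k \cdot \Vert \Ricci_{\mu(s_k)} \Vert \to 0$. The scaling of Ricci is $\Ricci_{\mu_{s_k}(1)} = s_k \cdot \Ricci_{\mu(s_k)}$ (Ricci scales inversely to the metric), so this hypothesis says $\Vert \Ricci_{\mu_{s_k}(1)} \Vert \to 0$, whence $\Ricci_{\bar\mu(1)} = 0$: the limit solvmanifold is Ricci-flat. But a Ricci-flat homogeneous (in fact solvmanifold) is flat by Alekseevskii–Kimel'fel'd, so $(\bar\Ss,\bar g(1))$ is flat, and then its bracket $\bar\mu(1)$ is a flat bracket lying in $\overline{\Gl(\sg)\cdot\mu_0} \cap \sca_\Beta$. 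Since $\mu_0$ is of real type, Lemma \ref{lem_realnonflat} forces $\bar\mu(1) = 0$, contradicting $\bar\mu(1) \neq 0$. This already gives the desired lower bound: if no uniform $\delta_{\mu_0}$ existed, such a sequence $s_k$ would exist.

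Actually, to match the precise statement (a bound for \emph{all} $t\geq 1$, not just along a sequence), I would phrase it as follows. Suppose the conclusion fails; then there are $t_k \to \infty$ (or possibly a bounded sequence, but on any compact time interval $t\cdot\Vert\Ricci_{\mu(t)}\Vert$ is continuous and, by Lemma \ref{lem_varphibdd} together with the fact that the flow is defined and the brackets are non-zero, bounded below by a positive constant on compact sets — so the failure must occur as $t_k\to\infty$) with $t_k \cdot \Vert \Ricci_{\mu(t_k)}\Vert \to 0$. Set $s_k := t_k$ and run the blow-down argument above with evaluation at time $t=1$: $\Vert \Ricci_{\mu_{s_k}(1)}\Vert = s_k \Vert \Ricci_{\mu(s_k)}\Vert \to 0$, and we reach the same contradiction. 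Alternatively, one can avoid invoking flatness of Ricci-flat homogeneous spaces and instead note that $\Ricci_{\bar\mu(1)}=0$ together with $\scalm(\bar\mu(1)) \le 0$ (strictly negative unless flat, by Lemmas 3.5–3.6 of \cite{BL17}) forces $\bar\mu(1)$ to be flat, hence zero by Lemma \ref{lem_realnonflat}.

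The main obstacle I anticipate is the bookkeeping around the blow-down limit: ensuring the limiting bracket is genuinely non-zero (this is where the lower bound $t\cdot\Vert\mu(t)\Vert^2 \geq c_{\mu_0}$ from Lemma \ref{lem_varphibdd} is essential, since $\Vert\mu_{s_k}(1)\Vert^2 = s_k \Vert\mu(s_k)\Vert^2 \geq c_{\mu_0}$), that the limit genuinely lies in $\overline{\Gl(\sg)\cdot\mu_0}$ and in the same stratum $\sca_\Beta$ (so that Lemma \ref{lem_realnonflat} applies — this uses $\Gs$-invariance of the strata and that bracket flow preserves $\Gs$-orbits), and that the algebraic non-collapsedness hypothesis is invoked correctly to guarantee the limit is a solvmanifold rather than a lower-dimensional collapsed object. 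The Ricci-scaling identity and the rest are routine.
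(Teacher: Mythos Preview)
Your argument has a circularity problem: you invoke Lemma~\ref{lem_realnonflat}, which requires the Lie algebra to be of real type, but at this point in the paper that is precisely what is \emph{not} yet known. Lemma~\ref{lem_Riclowerbd} is used in the proof of Theorem~\ref{mainthm_algnonc} to show that algebraic non-collapsedness \emph{implies} real type; its only hypothesis is algebraic non-collapsedness. Without real type there may well be non-zero flat brackets in $\overline{\Gl(\sg)\cdot \mu_0}$, so obtaining a flat limit bracket $\bar\mu(1)\neq 0$ is not a contradiction. Both versions of your argument (via Alekseevskii--Kimel'fel'd or via $\scalm$) end by appealing to Lemma~\ref{lem_realnonflat}, so both are circular.

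The paper's proof avoids real type entirely by using Lemma~\ref{lem_Riccismall}, which you never invoke. From $s_k\cdot\Vert\Ricci_{\mu(s_k)}\Vert\to 0$ one gets that a subsequence of blow-downs converges (in Cheeger--Gromov topology, uniformly on compact time intervals) to a Ricci-flat, hence flat, limit solution. This means that for large $k$ the blown-down Ricci curvature is uniformly small on the whole interval $[1,1+\alpha_{\mu_0}]$, which after undoing the scaling gives
\[
 \tilde t\cdot \Vert\Ricci_{\mu(\tilde t)}\Vert \leq \tfrac{1}{8C_n}\qquad\text{for all }\tilde t\in[s_k,(1+\alpha_{\mu_0})s_k].
\]
Since $(1+\alpha_{\mu_0})s_k > \alpha_{\mu_0}\cdot s_k$, this contradicts Lemma~\ref{lem_Riccismall}. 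The point is that the contradiction is extracted from the quantitative growth estimate on $t\Vert\mu(t)\Vert^2$ (itself depending only on non-collapsedness via Lemma~\ref{lem_varphibdd}), not from any algebraic obstruction to flat limits.
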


\begin{proof}
Assume that this is not the case and let $s_k \to \infty$ be a sequence of times with $s_k \cdot \Vert{ \Ricci_{\mu(s_k)} }\Vert \to 0$ as $k\to\infty$. Any convergent subsequence of the corresponding sequence of blow-downs $g_{s_k}(t) = \tfrac{1}{s_k}g (s_k \, t)$ must have a Ricci-flat limit. After passing to such a subsequence, it follows 
that there exists $k_0$ such that for all $k\geq k_0$
and all  $t\in [1,1+\alpha_{\mu_0}]$ we have  $ \left\Vert  \Ricci(g_{s_k}(t))  \right\Vert 
\leq \tfrac{1}{8C_n(1+\alpha_{\mu_0}) }$.  This yields
\[
      (s_k \, t) \cdot \Vert \Ricci(g(s_k \, t))\Vert = t \cdot \left\Vert  \Ricci(g_{s_k}(t))  \right\Vert \leq \tfrac{1}{8C_n}, 
\]
for all  $ t\in [1,1+\alpha_{\mu_0}]$, thus
$\tilde t \cdot \Vert {\Ricci_{\mu(\tilde t)} }\Vert \leq  \tfrac{1}{8C_n} $ for all $\tilde t\in [s_k, (1+\alpha_{\mu_0})s_k]$. But this contradicts Lemma \ref{lem_Riccismall}.
\end{proof}

We are  finally in a position to prove Theorem \ref{mainthm_algnonc}:

\begin{proof}[Proof of Theorem \ref{mainthm_algnonc}]
Let $(\Ss, g(t))_{t\in [0,\infty)}$ be an algebraically non-collapsed Ricci flow solution of left-invariant metrics, and let $0 \neq \mu^\sg \in\sca_\Beta$ correspond to the initial metric $g(0)$. 
As in the proof of Theorem \ref{mainthm_uniq}, let $\mu(t)$ be the corresponding solution to the gauged bracket flow and $\nu(t) := \vert {\scalm ({\mu(t)}) } \vert^{-1/2} \cdot \mu(t)$ the $\scalm$-normalized solution, which after a time reparameterization solves \eqref{eqn_normalizedgaugedBF}. 

Assume that $\Vert \nu(t_k) \Vert \to \infty$ for some sequence $t_k \to \infty$, and let $\tilde \nu_k := \nu(t_k) / \Vert \nu(t_k)\Vert = \mu(t_k) / \Vert \mu(t_k) \Vert$. Then any subsequential limit $\bar \nu$ is a solvable Lie bracket with $\scalm(\bar \nu) = 0$, hence flat (see \cite[Rmk.~3.2(b)]{Heber1998}). On the other hand, $\Vert \mu(t) \Vert \sim 1/\sqrt{t}$ by Lemma \ref{lem_varphibdd}, thus
\[
		\big\Vert {\Ricci_{\mu/\Vert \mu \Vert}  }\big\Vert  \sim  \Vert \Ricci_{\sqrt{t} \cdot \mu}\Vert = t \cdot \Vert \Ricci_{\mu}\Vert  \geq \delta_{\mu_0} > 0,
\]
thanks to Lemma \ref{lem_Riclowerbd}. This implies that $\bar \nu$ cannot be flat, a contradiction.

Precisely as in the proof of Theorem \ref{mainthm_uniq} it
follows that for some subsequence $s_k \to \infty$ we have $\nu(s_k) \to \nu_{\mathsf{sol}} \in \sca_\Beta$, a solvsoliton. By Remark \ref{rem_solvsolreal}, $\nu_{\mathsf{sol}}$ is of real type. And since the nilradical of solvable brackets in $\sca_\Beta$ is of constant dimension (equal to $\rank(\Beta^+)$), for $k$ large enough $\nu(s_k)$ is of real type by Lemma \ref{lem_realtype}. Since $\mu^\sg$ and $\nu(t)$ are isomorphic for all $t$, it follows that $\Ss$ is of real type.


Conversely, assume that $\Ss$ is of real type and let $\mu(t)$ be a bracket flow solution corresponding to a Ricci flow of left-invariant metrics on $\Ss$. By Corollary 9.13 in \cite{BL17} it suffices to show that $ t \cdot \Vert \mu(t) \Vert^2 \leq C_{\mu_0}$ for some constant $C_{\mu_0} > 0$. But this
follows immediately from 
\eqref{eqn_normconvergence} and the Type-III behavior 
of homogeneous Ricci flow solutions.
\end{proof}

\section{The Einstein case}\label{sec_Einstein}

In this section we prove Theorem \ref{mainthm_Einstein}, an improvement of the above convergence results in the Einstein case, made possible by the linearization computations from Section \ref{sec_linear}.

\begin{proof}[Proof of Theorem \ref{mainthm_Einstein}]
Let $(\nu^*(t))_{t \in [0,\infty)}$ denote a  solution to the $\scalm$-normalized gauged 
bracket flow (\ref{eqn_normalizedgaugedBF}) keeping $\scalm(\nu^*(t)) \equiv -1$, with
$\nu^*(0) \in \Qb \cdot \mu^\sg$. 
By Theorem \ref{mainthm_uniq} we may assume
 that for a large time we are as close to an Einstein bracket $\mu_E$ as we like.
The set of Einstein brackets in $\Qb \cdot \mu^\sg$ with $\scalm \equiv -1$
equals $\Kb \cdot \mu_E$ by Theorem \ref{thm_uniqsolvsol} and \cite[Cor.~8.4]{GIT}. Moreover,
by Theorem \ref{thm_BFlin} for such an Einstein bracket $\mu_E$
the tangent space to its $\Slb$-orbit may be decomposed as 
$T_{\mu_E}(\Slb \cdot \mu_E) =   T_{\mu_E}(\Kb \cdot \mu_E) \oplus V_{\mu_E}$,
where $V_{\mu_E}$ denotes the sum of the eigenspaces of the linearization of \eqref{eqn_normalizedgaugedBF} with negative eigenvalues. 
 This decomposition is $\Kb$-equivariant, since the gauged bracket flow is so by Remark \ref{rmk_gbfequiv}. Using the normal exponential map of the compact orbit $\Kb\cdot \mu_E$
in $\Slb \cdot \mu_E$ in direction of $V_{\mu_E}$,
we can find coordinates $(x,y)\in U:=(1,3)^k \times (-1,1)^l$ 
of the orbit $\Slb\cdot \mu_E$ close to $\mu_E$,
where $(x,0)$ parametrizes to $\Kb$-orbit of $\mu_E$ locally and
$(0,y)$ the transversal slice given by $V_{\mu_E}$. In these coordinates
the differential equation (\ref{eqn_normalizedgaugedBF}) reads as $(x,y)' = F(x,y)=(F_1(x,y),F_2(x,y))$ with
 $F(x,0)=0$ and 
\[
   (dF)_{(x,0)} = \left( \begin{array}{cc}  0 & \tfrac{\partial F_1}{\partial y} \\
             0 & \tfrac{\partial F_2}{\partial y} \end{array} \right)_{(x,0)}\,,
\]
where $\big(\tfrac{\partial F_2}{\partial y} \big)_{(x,0)}$ has only
eigenvalues with negative real part, say bounded from the above by $-\epsilon<0$.
It is easy to see that choosing $y(0)$ small enough one can conclude that 
\begin{equation}\label{eqn_expfast}
	\nu^*(t) \underset{t\to\infty}\longrightarrow \mu_E, \qquad  \hbox{exponentially fast.}
\end{equation}
Next, consider the solution $(\nu(t))_{t\in [0,\infty)}$ to the $\scalm$-normalized  bracket flow
\begin{equation*}
  \frac{\rm d  \nu}{ {\rm d} t} = - \pi \big( \Ricci_{\nu}   + \Vert \Riccim_{\nu}\Vert^2 \cdot \Id_\sg \big) { \nu}, \qquad { \nu}(0) = \nu^*(0) \in \Qb\cdot \mu^\sg \,.
\end{equation*}
Since $\nu^*(t)$ is obtained by `gauging' $\nu(t)$, by \cite[$\S$3]{BL17} there exists a smooth family of orthogonal maps $(k(t)) \subset \Or(\sg)$ such that 
\begin{equation}\label{eqn_nugaugednu*}
	\nu(t) = k(t) \cdot \nu^*(t), \qquad  \forall \, \, t\in [0,\infty).
\end{equation}
It might be the case that the $\omega$-limit of $(\nu(t))_{t\in[0,\infty)}$ is not a single bracket. However, by \eqref{eqn_expfast} and  compactness of $\Or(\sg)$, it must be contained in $\Or(\sg) \cdot \mu_E$. In particular, since the function $\mu \mapsto \Vert {\Riccim_\mu} \Vert^2$ is $\Or(\sg)$-invariant, there exists a limit $\Vert {\Riccim_{\nu(t)} } \Vert^2 \to c_1$ as $t \to \infty$. 
And also by $\eqref{eqn_expfast}$, and using that the entries of $\Ricci_\mu$ are quadratic in the entries of $\mu$, we have that $\Ricci_{\nu^*(t)} \to c \cdot \Id_\sg$ exponentially fast, since $\mu_E$ is Einstein. Hence, from \eqref{eqn_nugaugednu*} and the $\Or(\sg)$-equivariance of $\mu \mapsto \Ricci_\mu$, we deduce that $\Ricci_{\nu(t)} \to c_2 \cdot \Id_\sg$ as $t\to \infty$, exponentially fast. We thus get exponentially fast convergence
\begin{equation}\label{eqn_vftozero}
		\Ricci_{\nu(t)} + \Vert {\Riccim_{\nu(t)} }\Vert^2 \cdot \Id_\sg  \underset{t\to\infty}\longrightarrow \alpha \cdot \Id_\sg.
\end{equation}
Taking scalar products against $\Riccim_{\nu(t)}$ we get $\alpha = 0$,
since $\scalm \equiv -1$ and 
$\la \Riccim_{\mu}, \Ricci_\mu\ra = \Vert {\Riccim_\mu}\Vert^2$ (see \cite[Lemma 2.1]{warped}).

Recall now that by Theorem \ref{thm_BFRFequiv}, for a curve
 $(h(t)) \subset \Gl(\sg)$ solving the linear equation
\begin{eqnarray}
  h'(t) \,\, = \,\, 
      -  \big(  \Ricci_{\nu(t)} + \Vert{ \Riccim_{\nu(t)} }\Vert^2 \cdot \Id_\sg  \big)
          \cdot h(t)\,, \quad h(0)=\Id_\sg, \label{Uhlnorm1}
\end{eqnarray} 
 one can recover the corresponding $\scalm$-normalized Ricci flow solution.
 By \eqref{eqn_vftozero}, 
 the differential equation  (\ref{Uhlnorm1}) can be rewritten as 
\begin{eqnarray}
h'(t)  &= &  \delta(t) \cdot h(t)
\,, \quad h(0)=\Id_\sg\,, \label{Uhlnorm2}
\end{eqnarray}
where $\delta(t)\in \Syms$ converges exponentially fast to $0$ for $t\to \infty$.
We denote by  $\sigma(t)\geq 0$ the maximum between $0$ and the largest eigenvalue of $\delta(t)$,
and by $f(t): = \tr( h(t) \cdot h(t)^T ) = \Vert h(t) \Vert ^2$.
Since $\sigma(t)$ is  integrable  over $[0,\infty)$, from
\[
  \tr( h'(t)h(t)^T )  =  \tr \big( \delta(t) \cdot h(t) \cdot h(t)^T \big),
\]
we get a differential inequality $f'(t) \leq  2 \sigma(t) f(t)$, thus
 $f(t)$ is bounded above on $[0,\infty)$.

The function $g(t) :=\det(h(t))$  satisfies an 
equation $g'(t) = g(t) \cdot s(t)$,
where $\vert s(t)\vert $ is again integrable over $[0,\infty)$.
Thus, there exists a limit $\lim_{t \to \infty}g(t)>0$. Consequently,
we can find a subsequence $(t_i)_{ i \in \NN}$ of times 
converging to infinity,
such that $\lim_{i \to \infty} h(t_i) \to h_\infty \in \Gl(\sg)$.
From \eqref{Uhlnorm2} it follows that $\Vert h'(t)\Vert$ is integrable over $[0,\infty)$, hence the curve $h : [0,\infty) \to \Gs$ has finite length and we must have $\lim_{t \to \infty}h(t)=h_\infty$.

After knowing that the $\scalm$-normalized Ricci flow has a non-flat limit, the same is also true for the scalar-curvature-normalized solution, as they differ only by scaling. The theorem now follows using the uniqueness of Einstein metrics stated in Corollary \ref{cor_uniqsolvsolmetric}.
\end{proof}

\begin{remark}\label{rem:solnonCinfty}
If the limit bracket is not Einstein but a non-trivial solvsoliton,
then the endomorphism
$\Ricci_{\nu(t)}$ $+ \Vert{ \Riccim_{\nu(t)} }\Vert^2 \cdot \Id_\sg$ converges exponentially fast to a derivation $D \neq 0$ of the limit bracket $\musol$. Thus equation (\ref{Uhlnorm2}) becomes 
\[
  h'(t) = - (\delta(t) + D) \, \cdot \, h(t), \qquad h(0) = \Id_\sg,
\] 
with $\delta(t) \to 0$.
It follows that the solution $h(t)$ does not converge.
\end{remark}

\section{The linearization of the bracket flow at a solvsoliton}\label{sec_linear}

We finally compute the linearization of the $\scalm$-normalized gauged bracket flow
\begin{equation}\label{eqn_scalmgaugedBF}
  \frac{\rm d  \nu}{ {\rm d} t} = - \pi \Big( ( \Riccim_{\nu} )_{\qg_\Beta}  + r_\nu \cdot \Id_\sg \Big) \, { \nu}, \qquad { \nu}(0) = \nu_0 \in \Qb\cdot \mu^\sg \,,
\end{equation}
at a solvsoliton bracket $\bar \mu$ which is gauged correctly w.r.t $\Beta$. Here, $r_\nu = \Vert{ \Riccim_\nu}\Vert^2$. Recall that for such a bracket $\mub \in \Vnnss \subset \sca_\Beta$ we have $\Riccim_\mub = \Beta$ and
\begin{equation}\label{eqn_betapder}
   (\Riccim_{\mub} )_{\qg_\Beta}  + r_\mub \cdot \Id_\sg  = \Beta + \Vert \Beta \Vert^2 \cdot \Id_\sg = \Beta^+ \in \Der(\mub)\,,
\end{equation}
thanks to Proposition \ref{prop_solvsol}. Thus, $\mub$ is a fixed point of \eqref{eqn_scalmgaugedBF}.

The evolution equations for $\Riccim_\mu$ stated in \cite[pp.~390]{homRF},
applied to (\ref{eqn_scalmgaugedBF}), imply that
\begin{eqnarray*}
  T_\mub \big( (\Qb\cdot \mub) \cap \{ \scalm=-1\}\big) = \big\{ \pi(A)\mub : A\in \qg_\Beta, \langle A, \Riccim_\mub \rangle = 0 \big\}\,.
\end{eqnarray*}
In particular if $\mub$ is a solvsoliton with $\Riccim_\mub = \Beta$ then 
\begin{eqnarray}\label{eqn_qbetascalone}
 T_\mub \big( (\Qb\cdot \mub) \cap \{ \scalm=-1\}  \big) = T_\mub\big(\Slb \cdot \mub\big).
\end{eqnarray}


\begin{theorem}\label{thm_BFlin}
Let $\mub \in \Vnnss \subset \sca_\Beta$ be a solvsoliton bracket with $\Riccim_\mub = \Beta$. Then, the linearization of the $\scalm$-normalized gauged bracket flow \eqref{eqn_scalmgaugedBF} at $\mub$,
\[
  L_\mub : T_\mub \big( \Slb \cdot \mub \big) \to T_\mub \big( \Slb \cdot \mub \big),
\]
has kernel given by $\pi(\kg_\Beta)\mub$, and its non-zero eigenvalues are negative.
\end{theorem}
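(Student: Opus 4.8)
The plan is to compute $L_\mub$ as explicitly as possible using the evolution equations for $\Riccim_\mu$ and the variational structure of the Lyapunov function $F_\Beta$ from Theorem \ref{thm_lyapunov}. First I would observe that, because $\mub$ is a critical point of the energy map $\normmm$ with $\mmm(\mub)$ a positive multiple of $\Beta$ (Lemma \ref{lem_solvsolmmsol} and Proposition \ref{prop_solvsol}), one can pass to the critical bracket $h\cdot\mub$ in the same $\Gl(\sg)$-orbit and work there; since the gauged bracket flow is $\Kb$-equivariant (Remark \ref{rmk_gbfequiv}) and the gauging only changes things by $\Or(\sg)$, the spectrum of $L_\mub$ is unaffected. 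So I may assume $\mmm(\mub)=\Beta$. Differentiating \eqref{eqn_scalmgaugedBF} at the fixed point $\mub$, and using \eqref{eqn_betapder} together with the first-variation formulas ${\rm d}\scalm|_\mu(\pi(A)\mu)=-2\la\Riccim_\mu,A\ra$ and the evolution of $\Riccim_\mu$ from \cite{homRF}, one gets for $X=\pi(A)\mub\in T_\mub(\Slb\cdot\mub)$ a formula of the shape
\[
  L_\mub(X) \;=\; -\pi\big((\delta_A\Riccim)_{\qg_\Beta} + 2\la\Riccim_\mub,\delta_A\Riccim\ra\,\Id_\sg\big)\mub \;-\; \pi(\Beta^+)X,
\]
where $\delta_A\Riccim$ is the linearization of $\mu\mapsto\Riccim_\mu$ in the direction $\pi(A)\mub$. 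The term $-\pi(\Beta^+)X$ is, up to the first-order terms, essentially the linearized ``drift by a derivation.''

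Next I would identify the kernel. Since $\kg_\Beta=\sog(\sg)\cap\ggo_\Beta$, for $A\in\kg_\Beta$ the curve $\exp(tA)\cdot\mub$ stays in $\Kb\cdot\mub$, hence in the fixed-point set of the $\Kb$-equivariant normalized gauged bracket flow (the whole $\Kb$-orbit of a solvsoliton consists of solvsolitons with the same $\scalm$, by Theorem \ref{thm_uniqsolvsol}/\cite[Cor.~8.4]{GIT}); therefore $\pi(\kg_\Beta)\mub\subseteq\ker L_\mub$. For the reverse inclusion and the negativity of the remaining eigenvalues I would bring in the Lyapunov function. The key point is that along \eqref{eqn_scalmgaugedBF} we have ${\rm d}F_\Beta/{\rm d}t = 2\,v_\Beta^2\cdot(\Vert\Riccim_\mu\Vert^2+\scalm(\mu)\la\Riccim_\mu,\Beta\ra)\ge 0$ with equality exactly at solvsolitons; thus near $\mub$, on the slice $\{\scalm=-1\}\cap\Slb\cdot\mub$, the function $G:=F_\Beta$ has an isolated maximum along the $\Kb$-orbit and is ``strictly increasing'' transverse to it. Computing the Hessian of $G$ at $\mub$ transverse to $\pi(\kg_\Beta)\mub$ and relating it to $L_\mub$ (i.e., showing $L_\mub$ is, up to a positive-definite change of inner product coming from $v_\Beta$, the negative Hessian of $G$) forces the transverse spectrum to be strictly negative and the kernel to be exactly $\pi(\kg_\Beta)\mub$.

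The main obstacle, I expect, is the Hessian computation of $F_\Beta$ at $\mub$ and in particular controlling the contribution of the $\Beta$-volume functional $v_\Beta$: it is only defined on a single $\Qb$-orbit, it is not given by a simple closed formula, and its second derivatives enter the Hessian of $F_\Beta$. One way around this is to avoid $v_\Beta$ entirely by working directly with the linear algebra: show that the nontrivial eigenvalues of $L_\mub$ coincide (via an $\mmm$-moment-map / Kempf–Ness type argument) with those of the Hessian of $\normmm$ at the critical point, which is known to be positive semidefinite with kernel the tangent space to the $\Gl(\sg)$-critical set (here $\RR_{>0}\cdot\Kb\cdot\mub$ by Theorem \ref{thm_GITuniq}); combined with the drift term $-\pi(\Beta^+)$, which acts with nonpositive real part on $T_\mub(\Slb\cdot\mub)$ because $\Beta^+\ge 0$ is a derivation, this yields strict negativity off the kernel. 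Either route reduces the theorem to a finite-dimensional spectral statement about $\pi$ and the known GIT Hessian, which is where the real work lies; the rest is the first-variation bookkeeping sketched above.
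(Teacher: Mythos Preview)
Your reduction step has a genuine gap. You propose passing from the solvsoliton bracket $\mub$ (with $\Riccim_\mub=\Beta$) to the bracket $h\cdot\mub$ with $\mmm(h\cdot\mub)=\Beta$ from Lemma~\ref{lem_solvsolmmsol}, and claim the spectrum of the linearization is unaffected. But the $h$ in that lemma satisfies $h^th=\Id_\sg-(2\Vert\Beta\Vert^2)^{-1}\kf_\mub$ and lies in $\Or(\sg)$ only when $\kf_\mub=0$, i.e.\ only in the nilpotent case. Since the gauged bracket flow is merely $\Kb$-equivariant (Remark~\ref{rmk_gbfequiv}), not $\Gl(\sg)$-equivariant, there is no relation between $L_\mub$ and a linearization at $h\cdot\mub$; in fact $h\cdot\mub$ is not even a fixed point of~\eqref{eqn_scalmgaugedBF}, because by Lemma~\ref{lem_solvsolmmsol}(i) one has $\Riccim_{h\cdot\mub}=(h^{-1})^t\Beta h^{-1}\neq\Beta$. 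So your Option~B collapses. Your Option~A has a further problem beyond the $v_\Beta$ issue you flag: the bracket flow is \emph{not} a gradient flow for $F_\Beta$, so monotonicity of $F_\Beta$ and equality at solitons do not by themselves pin down the spectrum of $L_\mub$; you would first need to show $L_\mub$ is self-adjoint for some inner product and then relate $\la L_\mub X,X\ra$ to a Hessian, neither of which is automatic.

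What is missing is precisely the handling of the Killing form term that separates $\Riccim$ from $\mmm$. The paper works directly at $\mub$: after writing $L_\mub(\pi(A)\mub)=-\pi\big(P_\mub(A)+[\Beta^+,A]\big)\mub$ with $P_\mub(A)=\big({\rm d}\Riccim|_\mub(\pi(A)\mub)\big)_{\qg_\Beta}$ (and observing that the ${\rm d}r$ term vanishes because $\mub$ minimizes $\Vert\Riccim\Vert$ on $\Slb\cdot\mub\cap\{\scalm=-1\}$), it uses the structural theory of solvsolitons---the block decomposition $\sg=\ag\oplus\ngo$ with $\Beta^+|_\ag=0$, $\kf_\mub\in\glg(\ag)$, and the normality of $\ad_\mub Y$ for $Y\in\ag$---to prove that $P_\mub$ commutes with $\ad(\Beta^+)$ and is symmetric positive semidefinite with kernel $\Der(\mub)+\kg_\Beta$. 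The Killing form contribution $A\mapsto\tfrac12(A^t\kf_\mub+\kf_\mub A)$ on $\Sym(\ag)$ is controlled via Lemma~\ref{lem_solvsolmmsol}(i), giving $P_\mub|_{\Sym(\ag)}=2\Vert\Beta\Vert^2\cdot\Id$. Simultaneous diagonalization of $P_\mub$ and $\ad(\Beta^+)$ (both with nonnegative spectrum on $\slgb$) then yields the theorem.
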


\begin{proof}
We apply the formula for $L_\mub$ given in Lemma \ref{lem_Lmub}. Lemma \ref{lem_Pmupositive} implies that he kernel of $L_\mub$ is contained in $\pi(\kg_\Beta)\mub$. By Lemmas \ref{lem_propertiesPmub} and \ref{lem_Pmupositive}, we may now choose $A\in \slgb$ an eigenvector of both $P_\mub$ and $\ad(\Beta^+)$, with eigenvalues adding up to $c>0$. Then,
\[
   L_\mub(\pi(A)\mub) = -\pi \big(P_\mub(A) + [\Beta^+, A] \big) \mub = - c \cdot \pi(A)\mub,
\]
hence $\pi(A)\mub$ is an eigenvector of $L_\mub$ with negative eigenvalue. The theorem follows.
\end{proof}

In the rest of this section $\mub$ will denote a solvsoliton bracket as in Theorem \ref{thm_BFlin}.

\begin{lemma}\label{lem_Lmub}
If $A\in \slgb$ then $L_\mub \big( \pi(A)\mub \big) = -\pi\big( P_\mub(A) + [\Beta^+, A] \big)\mub$,
where 
\begin{equation}\label{eqn_defPmu}
    P_\mub : \slgb \to \slgb\,;  \qquad
  A \mapsto  \Big( {\rm d} \, {\Riccim} \big|_\mub (\pi(A)\mub)  \Big)_{\qg_\Beta} .
\end{equation}
\end{lemma}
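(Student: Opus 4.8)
The plan is to compute the differential of the gauged bracket flow vector field at the fixed point $\mub$ directly. Write the vector field as $X(\nu) = -\pi\big( (\Riccim_\nu)_{\qg_\Beta} + r_\nu \cdot \Id_\sg \big)\nu$, and differentiate using the product rule. Because $\mub$ is a fixed point and, by \eqref{eqn_betapder}, $(\Riccim_\mub)_{\qg_\Beta} + r_\mub\cdot\Id_\sg = \Beta^+ \in \Der(\mub)$, the term coming from differentiating $\nu$ alone contributes $-\pi(\Beta^+)(\pi(A)\mub)$.

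The key algebraic identity to invoke is the standard commutation relation for the representation $\pi$: for $A, B \in \glgs$ one has $\pi(B)(\pi(A)\mu) - \pi(A)(\pi(B)\mu) = \pi([B,A])\mu$ on any bracket $\mu$ (this is just the fact that $\pi$ is a Lie algebra representation). Since $\Beta^+$ is a derivation of $\mub$, $\pi(\Beta^+)\mub = 0$, so applying the identity with $B = \Beta^+$ and $\mu = \mub$ gives $\pi(\Beta^+)(\pi(A)\mub) = \pi([\Beta^+, A])\mub$. This converts the ``$\pi(\Beta^+)$ acting on the tangent vector'' term into $-\pi([\Beta^+,A])\mub = -\pi([\Beta^+,A])\mub$, matching the second summand in the claimed formula.

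It remains to handle the term from differentiating the argument of $\pi$, namely $-\pi\Big( \dd\big[(\Riccim_\cdot)_{\qg_\Beta} + r_\cdot\Id_\sg\big]\big|_\mub(\pi(A)\mub)\Big)\mub$. The projection $(\cdot)_{\qg_\Beta}$ is linear, so it commutes with differentiation, giving $\big(\dd\Riccim|_\mub(\pi(A)\mub)\big)_{\qg_\Beta}$, which is exactly $P_\mub(A)$ by the definition \eqref{eqn_defPmu}. For the scalar term $r_\nu \cdot \Id_\sg$: differentiating $r_\nu = \Vert\Riccim_\nu\Vert^2$ gives $\dd r|_\mub(\pi(A)\mub) = 2\langle \Riccim_\mub, \dd\Riccim|_\mub(\pi(A)\mub)\rangle = 2\langle \Beta, \dd\Riccim|_\mub(\pi(A)\mub)\rangle$. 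Here is where the restriction to $A \in \slgb$ enters: one must argue this scalar vanishes, so that no extra $\pi(\mathrm{scalar}\cdot\Id)\mub = -(\mathrm{scalar})\mub$ term appears. Indeed, $\pi(A)\mub$ is tangent to $(\Qb\cdot\mub)\cap\{\scalm = -1\}$ for $A\in\slgb$ by \eqref{eqn_qbetascalone}, and along that level set $\dd\scalm = 0$; combining this with the identity $\scalm = \tr\Riccim$ and the trace part of the evolution formula for $\Riccim$ from \cite{homRF} forces $\langle\Beta, \dd\Riccim|_\mub(\pi(A)\mub)\rangle = 0$. The main obstacle is organizing this last verification cleanly — making precise that the $r_\nu$-variation drops out on the slice $\slgb$, using the orthogonality $\Riccim_\mub = \Beta \perp (\Riccim)_{\kg_{\ug_\Beta}}$ and the trace computation — after which assembling the two surviving terms yields $L_\mub(\pi(A)\mub) = -\pi\big(P_\mub(A) + [\Beta^+, A]\big)\mub$ as stated.
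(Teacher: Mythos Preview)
Your overall structure matches the paper's proof exactly: differentiate the vector field, split into three terms, use the representation identity $\pi([\Beta^+,A])\mub = \pi(\Beta^+)\pi(A)\mub$ (via $\Beta^+\in\Der(\mub)$), and argue that the $\dd r$-term vanishes. The first two steps are correct and identical to the paper's.

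The gap is in your treatment of $\dd r|_\mub(\pi(A)\mub)$. You correctly compute this as $2\langle\Beta, \dd\Riccim|_\mub(\pi(A)\mub)\rangle$, but your proposed verification does not close. From $\scalm=\tr\Riccim$ and $\dd\scalm|_\mub(\pi(A)\mub)=0$ you only get $\langle\Id_\sg,\dd\Riccim|_\mub(\pi(A)\mub)\rangle=0$, not $\langle\Beta,\dd\Riccim|_\mub(\pi(A)\mub)\rangle=0$; these are different conditions since $\Beta$ is not a multiple of $\Id_\sg$. The orthogonality $\Beta\perp\kg_{\ug_\Beta}$ you mention only tells you $\langle\Beta,\dd\Riccim\rangle=\langle\Beta,P_\mub(A)\rangle$, which does not help by itself. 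The paper sidesteps any such computation: from the Lyapunov inequality \eqref{eqn_Fbetanondec} one has $\Vert\Riccim_\mu\Vert\geq|\scalm(\mu)|\cdot\Vert\Beta\Vert$ on $\Qb\cdot\mub$, with equality at $\mub$. Since $\scalm\equiv -1$ on $\Slb\cdot\mub$, the solvsoliton $\mub$ is therefore a \emph{minimum} of $r_\mu=\Vert\Riccim_\mu\Vert^2$ on $\Slb\cdot\mub$, so its first variation vanishes there. This is the missing idea.

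You also do not address the claim built into \eqref{eqn_defPmu} that $P_\mub$ actually lands in $\slgb$ rather than merely in $\qg_\Beta$. The paper handles this in the same spirit: by \eqref{eqn_ricbetapgeq0}, $\mub$ is also a minimum of $\mu\mapsto\langle\Riccim_\mu,\Beta\rangle$ on $\Slb\cdot\mub$, forcing $\langle\Beta,P_\mub(A)\rangle=0$ for $A\in\slgb$, i.e.\ $P_\mub(A)\in\slgb$.
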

\begin{proof}
Since $( \Riccim_{\mub} )_{\qg_\Beta}  + r_\mub \cdot \Id_\sg = \Beta^+$ by \eqref{eqn_betapder},
a direct computation yields
\begin{eqnarray}\label{eqn_formulaLmub}
  L_\mub \left( \pi(A)\mub \right) 
&=& 
 - \pi\left( P_\mub (A) \right) \mub 
   - \pi \left(   \left({\rm d} r |_\mub (\pi(A) \mub) \right) \cdot \Id_\sg  \right) \mub - \pi(\Beta^+) \pi(A)\mub \, .
\end{eqnarray}
Using that $\Beta^+ \in \Der(\mub)$, we obtain
\[
  \pi\big( [\Beta^+, A] \big)\mub = \pi(\Beta^+) \pi(A) \mub - \pi(A)\pi(\Beta^+)\mub = \pi(\Beta^+)\pi(A)\mub\,.
\]
On the other hand, by \eqref{eqn_Fbetanondec} we know that $\Vert {\Riccim_\mu} \Vert \geq \vert \scalm_\mu \vert \cdot \Vert \Beta \Vert$ for all $\mu \in \Qb \cdot \mub$, and at $\mub$ equality holds. Thus the first variation of $ \Vert {\Riccim_\mu} \Vert$ at $\mub$ along directions tangent to the subset of brackets with $\scalm = -1$ must vanish, and this amounts to saying that $\mub$ is a critical point for $r_\mu$ restricted to ${\Slb \cdot \mub}$. Therefore, the second term in \eqref{eqn_formulaLmub} vanishes.

Finally note, that the image of $P_\mub$ is contained in $\qg_\Beta$. But since by \eqref{eqn_ricbetapgeq0} $\mub$ is also a minimum for the functional $\mu \mapsto \la \Riccim_\mu, \Beta \ra$ restricted to $\Slb \cdot \mub$,
it follows that the image is contained in the subalgebra
$\slgb$ by its very definition.
\end{proof}

Recall from \eqref{eqn_guqbeta} that $\ad(\Beta) = \ad(\Beta^+) : \glgs \to \glgs$ is a symmetric map, and if $(\glg(\sg)_r)_{r\in \RR}$ denote its pairwise orthogonal eigenspaces, then $\hg_\Beta \subset \ggo_\Beta = \glg(\sg)_0$ and $\ug_\Beta = \bigoplus_{r>0} \glg(\sg)_r$. 

\begin{lemma}\label{lem_piAmueigenv}
For $A\in \glg(\sg)_r$ we have that $\pi(A)\mub \in V_{\Beta^+}^r$ (see paragraph before \eqref{eqn_defVnn}).
\end{lemma}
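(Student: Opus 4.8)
The plan is to unwind the definitions and use the fact that $\pi$ is a Lie algebra representation together with $\Beta^+ \in \Der(\mub)$. Recall that $V_{\Beta^+}^r$ is the $r$-eigenspace of the operator $\pi(\Beta^+) = \pi(\Beta) - \Vert\Beta\Vert^2\Id_{\Vs}$ on $\Vs$, so the claim $\pi(A)\mub \in V_{\Beta^+}^r$ for $A \in \glg(\sg)_r$ amounts to showing $\pi(\Beta^+)\big(\pi(A)\mub\big) = r \cdot \pi(A)\mub$.

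First I would compute $\pi(\Beta^+)\pi(A)\mub$ using the commutator identity for the representation $\pi$, namely $\pi([\Beta^+, A]) = \pi(\Beta^+)\pi(A) - \pi(A)\pi(\Beta^+)$. This gives
\[
  \pi(\Beta^+)\pi(A)\mub = \pi([\Beta^+, A])\mub + \pi(A)\pi(\Beta^+)\mub.
\]
For the first term: since $A \in \glg(\sg)_r = \{ B : \ad(\Beta^+)B = [\Beta^+,B] = r B\}$ (using that $\ad(\Beta) = \ad(\Beta^+)$ as noted before the statement of Lemma \ref{lem_piAmueigenv}, because $\Beta^+ - \Beta = \Vert\Beta\Vert^2\Id_\sg$ is central), we have $[\Beta^+,A] = rA$, hence $\pi([\Beta^+,A])\mub = r\cdot\pi(A)\mub$. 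For the second term: since $\Beta^+ \in \Der(\sg,\mub)$ by Proposition \ref{prop_solvsol}, and a derivation of $(\sg,\mub)$ is precisely an element $D$ with $\pi(D)\mub = 0$ (this is immediate from the definition \eqref{eqn_gsrep} of $\pi$), we get $\pi(\Beta^+)\mub = 0$, so the second term vanishes. Combining, $\pi(\Beta^+)\pi(A)\mub = r\cdot\pi(A)\mub$, which is exactly the assertion.

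There is no real obstacle here — the lemma is a formal consequence of $\pi$ being a representation and $\Beta^+$ being a derivation; the only thing to be careful about is the bookkeeping identity $\ad(\Beta) = \ad(\Beta^+)$ and the sign convention $\pi(\Id_\sg) = -\Id_{\Vs}$, which is why $\pi(\Beta^+) = \pi(\Beta) - \Vert\Beta\Vert^2\Id_{\Vs}$ rather than $\pi(\Beta) + \Vert\Beta\Vert^2\Id_{\Vs}$. I would state the proof in two or three lines exactly along these lines.
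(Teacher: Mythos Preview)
Your proof is correct and follows essentially the same approach as the paper: apply the representation identity $\pi(\Beta^+)\pi(A)\mub = \pi([\Beta^+,A])\mub + \pi(A)\pi(\Beta^+)\mub$, use $[\Beta^+,A]=rA$, and note that the second term vanishes. The only cosmetic difference is that the paper phrases the vanishing of $\pi(\Beta^+)\mub$ as ``$\mub\in V_{\Beta^+}^0$'' rather than ``$\Beta^+\in\Der(\mub)$'', but these are equivalent by the definition of $\pi$.
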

\begin{proof}
Since $\mub \in \Vzero$, 
we have  $\pi(\Beta^+) \pi(A) \mub = \pi([\Beta^+, A]) \mub + \pi(A) \pi(\Beta^+) \mub = r \cdot \pi(A) \mub.$
\end{proof}

\begin{lemma}\label{lem_propertiesPmub}
The linear maps $P_\mub$, $\ad(\Beta^+) : \slgb \to \slgb$ commute. In particular, $P_\mub$ preserves $\hg_\Beta$ and $\ug_\Beta$, and it satisfies
\[
	P_\mub (A) = 
	\begin{cases}
			\unm \cdot \big(S \circ \delta_\mub^t \delta_\mub (A) + A^t \kf_\mub + \kf_\mub A \big), & \qquad A\in \hg_\Beta; \\
			\unm \cdot \delta_\mub^t \delta_\mub (A), &\qquad A\in \ug_\Beta.
	\end{cases}
\]
Here, $S(A) := \unm (A+A^t)$, and
\[
    \delta_\mub : \glg(\sg) \to V(\sg)\,\,;\,\,\,     A \mapsto -\pi(A)\mub\,,
\]
and $\delta_\mub^t : (V(\sg),\ip) \to (\glg(\sg),\ip)$ is the usual adjoint map, 
\end{lemma}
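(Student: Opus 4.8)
The plan is to compute $P_\mub$ explicitly by linearizing the modified Ricci curvature $\Riccim_\mu = \mm_\mu - \unm \kf_\mu$ at $\mub$, and then to read off the commutation with $\ad(\Beta^+)$ from that formula. First I would recall that $\mm_\mu = \unc \mmm(\mu) \Vert \mu \Vert^2$ is the (unnormalized) moment map, and that its first variation at a bracket $\mu$ in the direction $\pi(A)\mu$ is governed by the identity $\la \mm_\mu, A\ra = -\unc \la \pi(A)\mu,\mu\ra = \unc \la \delta_\mu(A),\mu\ra$; differentiating gives $\dd(\mm)|_\mub(\pi(A)\mub) = \unm\, S\big(\delta_\mub^t\delta_\mub(A)\big)$ up to the symmetrization $S$, which appears because $\mm_\mu \in \Syms$ while the naive derivative need not be symmetric — this is the standard computation behind the formula for the Hessian of $\normmm$ (cf.~\cite{GIT}, \cite{BL17}). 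For the Killing-form term, $\kf_{h\cdot\mu} = (h^{-1})^t \kf_\mu h^{-1}$ (see \cite[Lemma 3.7]{homRF}) linearizes at $h = \exp(tA)$, $\mu = \mub$, to $-\tfrac{}{}\big(A^t \kf_\mub + \kf_\mub A\big)$, so $\dd(-\unm\kf)|_\mub(\pi(A)\mub) = \unm(A^t\kf_\mub + \kf_\mub A)$. Combining, and then projecting onto $\qg_\Beta$ via $(\cdot)_{\qg_\Beta}$, yields a candidate formula; the refinement into the two cases $A\in\hg_\Beta$ and $A\in\ug_\Beta$ comes from observing that $\kf_\mub$ is supported on $\ag$ (the Killing form vanishes on the nilradical and, by the second identity in \eqref{eqn_properties}, has no $\ag$-$\ngo$ block), hence $A^t\kf_\mub + \kf_\mub A$ only survives the projection in the reductive part $\hg_\Beta$; for $A\in\ug_\Beta$ this term drops out and no symmetrization is needed because $\pi(A)\mub \in V_{\Beta^+}^r$ with $r>0$ already lies in the ``upper triangular'' part where the projection $(\cdot)_{\qg_\Beta}$ acts as a multiple of the identity on the relevant block (Remark \ref{rmk_projqbeta}).

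Second, for the commutation claim I would argue structurally rather than by brute force. The map $\ad(\Beta^+): \slgb \to \slgb$ has the eigenspace decomposition $\slgb = \hg_\Beta \oplus \bigoplus_{r>0}\glg(\sg)_r$ inherited from \eqref{eqn_guqbeta}, with $\hg_\Beta$ the zero-eigenspace. By Lemma \ref{lem_piAmueigenv}, if $A\in\glg(\sg)_r$ then $\pi(A)\mub \in V_{\Beta^+}^r$; since $\Riccim$ is $\Gl$-equivariant in the appropriate sense and $\Beta^+ \in \Der(\mub)$, the differential $\dd\Riccim|_\mub$ intertwines the $\pi(\Beta^+)$-action on $V(\sg)$ with conjugation by $\Beta^+$ on $\Syms$, so $\dd\Riccim|_\mub(V_{\Beta^+}^r) \subset \glg(\sg)_r$ (the $r$-eigenspace of $\ad(\Beta^+)$); composing with the projection $(\cdot)_{\qg_\Beta}$, which commutes with $\ad(\Beta^+)$ since conjugation by $\exp(t\Beta^+)$ preserves $\ggo_\Beta$ and $\ug_\Beta$ (cf.~the argument in Remark \ref{rmk_gbfequiv}), we conclude $P_\mub(\glg(\sg)_r \cap \slgb) \subset \glg(\sg)_r \cap \slgb$. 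This eigenspace-preservation is precisely the statement that $P_\mub$ and $\ad(\Beta^+)$ commute, and it immediately gives that $P_\mub$ preserves $\hg_\Beta$ (the $r=0$ part) and $\ug_\Beta$ (the $r>0$ part).

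Third, I would verify that the two explicit formulas for $P_\mub$ indeed land in $\slgb$ rather than merely in $\qg_\Beta$; but this was already established in the proof of Lemma \ref{lem_Lmub}, where one uses that $\mub$ minimizes $\mu\mapsto\la\Riccim_\mu,\Beta\ra$ on $\Slb\cdot\mub$ (from \eqref{eqn_ricbetapgeq0}), forcing the $\RR\Beta$-component of $\dd\Riccim|_\mub(\pi(A)\mub)$ to vanish — so I can simply invoke that. The main obstacle I anticipate is the bookkeeping in the first step: correctly identifying where the symmetrization operator $S$ enters (it should appear only in the $\hg_\Beta$-case, reflecting that $\delta_\mub^t\delta_\mub(A)$ need not be symmetric while $\Riccim_\mub$ is, whereas on $\ug_\Beta$ the output is forced into $\ug_\Beta$ and symmetrization would destroy that), and checking that the projection $(\cdot)_{\qg_\Beta}$ interacts with $\delta_\mub^t\delta_\mub$, $A^t\kf_\mub$, and $\kf_\mub A$ exactly so as to produce the stated closed forms. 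This is the computation that requires care with the block structure relative to $\sg = \ag \oplus \ngo$ and the Cartan-type decomposition $\glgs = \ug_\Beta \oplus \ggo_\Beta \oplus \ug_\Beta^t$; everything else is formal.
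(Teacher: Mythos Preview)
Your overall strategy coincides with the paper's: linearize $\Riccim_\mu = \mm_\mu - \tfrac12\kf_\mu$ to obtain
\[
	P_\mub(A) = \tfrac12\big(S\circ\delta_\mub^t\delta_\mub(A)\big)_{\qg_\Beta} + \tfrac12\big(A^t\kf_\mub + \kf_\mub A\big)_{\qg_\Beta},
\]
then analyze the two summands relative to the $\ad(\Beta^+)$-eigenspace decomposition and the block structure coming from $\sg = \ag\oplus\ngo$. Your derivation of the formula and your simplification on $\ug_\Beta$ (the Killing form term vanishes because $\kf_\mub\in\glg(\ag)$ while $A = \minimatrix{0}{0}{\star}{\star}$; and $(S(\cdot))_{\qg_\Beta} = \Id$ on $\ug_\Beta$ by Remark \ref{rmk_projqbeta}) match the paper exactly.

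The gap is in your ``structural'' commutation argument. You claim that since $\Riccim$ is ``$\Gl$-equivariant in the appropriate sense'' and $\Beta^+\in\Der(\mub)$, the differential $\dd\Riccim|_\mub$ intertwines $\pi(\Beta^+)$ on $V(\sg)$ with $\ad(\Beta^+)$ on $\Syms$. But $\Riccim$ is only $\Or(\sg)$-equivariant, and $\exp(t\Beta^+)$ is not orthogonal (indeed $\Beta^+\in\pg$). For the Killing form piece the actual equivariance is $\kf_{h\cdot\mu} = (h^{-1})^t\kf_\mu h^{-1}$, whose linearization at the fixed point $\mub$ gives $\dd\kf|_\mub(\pi(\Beta^+)\nu) = -\Beta^+\dd\kf|_\mub(\nu) - \dd\kf|_\mub(\nu)\Beta^+$, the \emph{negative anticommutator}, not the commutator. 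So $\dd\Riccim|_\mub(V_{\Beta^+}^r)\subset\glg(\sg)_r$ does not follow from any equivariance principle. The paper instead argues piece by piece: first, $\delta_\mub^t\delta_\mub$ commutes with $\ad(\Beta^+)$ because for $A_1\in\glg(\sg)_{r_1}$, $A_2\in\glg(\sg)_{r_2}$ with $r_1\neq r_2$ one has $\la\delta_\mub^t\delta_\mub(A_1),A_2\ra = \la\pi(A_1)\mub,\pi(A_2)\mub\ra = 0$ by Lemma \ref{lem_piAmueigenv} (equivalently, $\delta_\mub$ intertwines the self-adjoint operators $\ad(\Beta^+)$ and $\pi(\Beta^+)$, hence so does $\delta_\mub^t$); second, the Killing form term is checked directly to vanish on $\ug_\Beta$ and to preserve $\ggo_\Beta$ via the block structure. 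You already have both of these observations scattered in your first paragraph --- you just need to assemble them into the commutation proof, rather than invoking a nonexistent global equivariance.
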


\begin{proof}
Using the formula for ${\rm d} \, {\Riccim} \big|_\mub (\pi(A)\mub)$ given in (36) and (37) in \cite{homRF} we have 
\[
    P_\mub(A) = \unm \big( S\circ \delta_\mub^t \delta_\mub(A) \big)_{\qg_\Beta} +  \unm \big(A^t \kf_\mub + \kf_\mub A\big)_{\qg_\Beta} \, .
\]
We show that $P_\mub$ preserves the eigenspaces of $\ad(\Beta^+)$, 
recalling that by Lemma \ref{lem_Lmub} $P_\mub$ preserves $\slgb$.

First, we claim that the linear map $A \mapsto \delta_\mub^t \delta_\mub(A)$ preserves the eigenspaces of $\ad(\Beta^+)$.
Indeed, if $A_1, A_2 \in \slgb$ are eigenvectors of $\ad(\Beta^+)$ with eigenvalues $r_1\neq r_2$, then 
\[
	\la \delta_\mub^t \delta_\mub (A_1), A_2 \ra = \la \pi(A_1) \mub, \pi(A_2) \mub\ra = 0
\]  
by Lemma \ref{lem_piAmueigenv}, since two different eigenspaces of $\pi(\Beta^+)$ are orthogonal. 
For $A\in \ggo_\Beta$ this implies that $S \circ \delta_\mub^t \delta_\mub(A) \in \ggo_\Beta$, and the projection $(\cdot)_{\qg_\Beta}$ is the identity when restricted to $\ggo_\Beta$ (see Remark \ref{rmk_projqbeta}). 
For $A\in \glg(\sg)_r \subset \ug_\Beta$, $r>0$, we have that $\delta_\mub^t \delta_\mub(A) \in \glg(\sg)_r$ as well, and the map $(S(\cdot))_{\qg_\Beta}$ is the identity on $\ug_\Beta$ (see again Remark \ref{rmk_projqbeta}). The statement for the first summand thus follows. 

Regarding the second term, the decomposition $\sg = \ag \oplus \ngo$ induces natural inclusions $\glg(\ag), \glg(\ngo) \subset \glgs$. By Proposition \ref{prop_solvsol}, we have that $\glg(\ag) \subset \ggo_\beta$. Since the Killing form is trivial on the nilradical, we also have $\kf_\mub \in \glg(\ag)$, or informally, $\kf_\mub = \minimatrix{\star}{0}{0}{0}$. On the other hand, any $A\in \ug_\Beta$ is of the form $A = \minimatrix{0}{0}{\star}{\star}$, thus the map $A \mapsto (A^t \kf_\mub + \kf_\mub A)$ vanishes on $\ug_\Beta$. It clearly preserves $\ggo_\Beta$.

Finally, the formula stated for $P_\mub$ now follows form the previous discussion.
\end{proof}

\begin{lemma}\label{lem_Pmupositive}
The linear map $P_\mub : \slgb \to \slgb$ defined in \eqref{eqn_defPmu} is symmetric, positive semi-definite, and its kernel is given by $\Der(\mub) + \kg_\Beta$.
\end{lemma}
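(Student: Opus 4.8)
The plan is to establish the three asserted properties of $P_\mub$ separately, making essential use of the explicit formula for $P_\mub$ obtained in Lemma \ref{lem_propertiesPmub}. Recall from that lemma that on $\hg_\Beta$ we have $P_\mub(A) = \unm(S\circ\delta_\mub^t\delta_\mub(A) + A^t\kf_\mub + \kf_\mub A)$ and on $\ug_\Beta$ we have $P_\mub(A) = \unm\delta_\mub^t\delta_\mub(A)$, where $\delta_\mub(A) = -\pi(A)\mub$.

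First I would prove symmetry. Since $P_\mub$ commutes with $\ad(\Beta^+)$ and hence preserves the orthogonal decomposition $\slgb = \hg_\Beta \oplus \ug_\Beta$, it suffices to check symmetry on each summand and to check that there is no ``cross term'' — but the latter is automatic since $P_\mub$ already maps $\hg_\Beta$ to $\hg_\Beta$ and $\ug_\Beta$ to $\ug_\Beta$ with respect to the orthogonal splitting. On $\ug_\Beta$, the operator $A\mapsto \delta_\mub^t\delta_\mub(A)$ is manifestly symmetric and positive semi-definite, being of the form $T^t T$; so $P_\mub|_{\ug_\Beta}$ is symmetric and $\geq 0$, with kernel $\Ker(\delta_\mub|_{\ug_\Beta}) = \{A\in\ug_\Beta : \pi(A)\mub = 0\} = \Der(\mub)\cap\ug_\Beta$. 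On $\hg_\Beta$ the term $\unm S\circ\delta_\mub^t\delta_\mub$ needs a small argument: for $A,B\in\hg_\Beta\subset\ggo_\Beta$, since $\delta_\mub^t\delta_\mub(A)\in\ggo_\Beta$ as shown in the proof of Lemma \ref{lem_propertiesPmub}, we have $\la S(\delta_\mub^t\delta_\mub(A)),B\ra = \la \delta_\mub^t\delta_\mub(A),B\ra$ when $B$ is symmetric, but for general $B$ one uses that $\langle S(C),B\rangle = \langle C, S(B)\rangle$ together with the fact that $\slgb\cap\ggo_\Beta = \hg_\Beta$ is preserved; the cleanest route is to observe that $\la S(\delta_\mub^t\delta_\mub A), B\ra = \unm(\la\pi(A)\mub,\pi(B)\mub\ra + \la\pi(A)\mub,\pi(B^t)\mub\ra)$, which is visibly symmetric in $A,B$. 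For the second term, $\la A^t\kf_\mub + \kf_\mub A, B\ra = \tr(\kf_\mub A B^t + \kf_\mub A^t B)$ using that $\kf_\mub$ is symmetric; this is symmetric in $A$ and $B$ by taking transposes inside the trace. Hence $P_\mub$ is symmetric on all of $\slgb$.

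Next, positive semi-definiteness: for $A\in\ug_\Beta$ this is clear from the $T^tT$ form. For $A\in\hg_\Beta$, write $\la P_\mub(A),A\ra = \unm\la\delta_\mub^t\delta_\mub(A),A\ra + \unm\la A^t\kf_\mub+\kf_\mub A,A\ra = \unm\Vert\pi(A)\mub\Vert^2 + \tr(\kf_\mub A A^t + \kf_\mub A^t A)$ — wait, one must be careful with the $S$: since for $A\in\hg_\Beta$ one has $\la S(\delta_\mub^t\delta_\mub A),A\ra = \la\delta_\mub^t\delta_\mub A, S(A)\ra = \la\delta_\mub^t\delta_\mub A, A_{\rm sym}\ra$, and this does not obviously reduce to $\Vert\pi(A)\mub\Vert^2$. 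The right approach here is to recognise $\tr(\kf_\mub AA^t) = \tr(\kf_\mub A^tA)$-type quantities in terms of $\ad_\mub$: recall $\la\kf_\mub X,Y\ra = \tr(\ad_\mub X\ad_\mub Y)$, and combine with the identity relating $\delta_\mub^t\delta_\mub$ to the ``algebraic'' operators; the key external input is the Ricci variation formula from \cite{homRF} which already packages these into $P_\mub$, so one should instead go back to the definition $P_\mub(A) = ({\rm d}\Riccim|_\mub(\pi(A)\mub))_{\qg_\Beta}$ and compute $\la P_\mub(A),A\ra = \la{\rm d}\Riccim|_\mub(\pi(A)\mub), A_{\qg_\Beta}\ra$; then use that $\Riccim = \mm - \unm\kf$ and that ${\rm d}\mm$, ${\rm d}\kf$ at $\mub$ applied to $\pi(A)\mub$ have known positivity. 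I expect the honest computation to show $\la P_\mub(A),A\ra = \unm\Vert\pi(S(A))\mub\Vert^2$ or a similar nonnegative expression once one correctly accounts for the symmetrization and the Killing-form piece; this is the step requiring the most care.

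Finally, the kernel. By symmetry and positive semi-definiteness, $\Ker P_\mub = \{A : \la P_\mub(A),A\ra = 0\}$. From the identification of $\la P_\mub(A),A\ra$ as a sum of squared norms (split over $\hg_\Beta$ and $\ug_\Beta$, which are $P_\mub$-invariant and mutually orthogonal), vanishing forces $\pi(S(A))\mub = 0$ together with the vanishing of the Killing-form contribution; on $\ug_\Beta$ this directly gives $\pi(A)\mub = 0$, i.e.\ $A\in\Der(\mub)$. Combining the two pieces and using $\kg_\Beta = \sog(\sg)\cap\ggo_\Beta \subset \hg_\Beta\oplus\RR\Beta$ — more precisely that skew-symmetric elements $A$ satisfy $S(A) = 0$ so they automatically kill the $S\circ\delta_\mub^t\delta_\mub$ term, and $\kf_\mub$-term vanishes for $A\in\kg_\Beta$ since conjugation by the compact group preserves $\kf_\mub$ — one gets $\kg_\Beta\subset\Ker P_\mub$. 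Conversely any $A\in\Ker P_\mub$ decomposes, using the $\ad(\Beta^+)$-eigenspace structure, as a derivation part plus a skew part; here one invokes that on $\hg_\Beta$, $\pi(S(A))\mub = 0$ means the symmetric part of $A$ is a symmetric derivation, hence (by Proposition \ref{prop_solvsol}, $\Beta^+\in\Der(\mub)$ and the structure of solvsoliton derivations) lies in $\Der(\mub)\cap\hg_\Beta$, while the skew part lies in $\kg_\Beta$. Therefore $\Ker P_\mub = \Der(\mub) + \kg_\Beta$, as claimed. The main obstacle, as indicated, is the correct bookkeeping in computing $\la P_\mub(A),A\ra$ from the Ricci variation formula so that it is transparently a nonnegative quadratic form whose nullspace is exactly $\Der(\mub)+\kg_\Beta$; everything else is formal.
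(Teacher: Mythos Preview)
Your proposal has a genuine gap at the crucial step: positive semi-definiteness of $P_\mub$ on $\hg_\Beta$. You write that you ``expect the honest computation to show $\la P_\mub(A),A\ra = \unm\Vert\pi(S(A))\mub\Vert^2$ or a similar nonnegative expression'', but this hope is unfounded. For $A\in\pg_\Beta$ symmetric, the formula from Lemma~\ref{lem_propertiesPmub} gives
\[
   \la P_\mub(A),A\ra \,=\, \tfrac12\Vert\pi(A)\mub\Vert^2 + \tr(\kf_\mub A^2),
\]
and since $(\sg,\mub)$ is solvable the Killing form $\kf_\mub$ is negative semi-definite, so the second term is $\leq 0$. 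There is thus a genuine competition, and no ``formal'' rearrangement yields a sum of squares. Your kernel argument inherits the same problem: without knowing the precise positivity, you cannot conclude that $\la P_\mub(A),A\ra=0$ forces $\pi(S(A))\mub=0$.

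What the paper does, and what your outline is missing, is to exploit the solvsoliton structure via the orthogonal splitting $\sg=\ag\oplus\ngo$. First it disposes of $\kg_\Beta$ directly by $\Or(\sg)$-equivariance of $\Riccim$ (cleaner than your attempt via the formula), reducing to $\pg_\Beta$. Then $\pg_\Beta$ is split as $\Sym(\ag)\oplus\pg_\Beta^\ngo$. On $\pg_\Beta^\ngo$ the Killing-form term vanishes because $\kf_\mub|_\ngo=0$, so the easy argument applies there. On $\Sym(\ag)$ one cannot argue generically at all; instead Lemma~\ref{lem_solvsolmmsol}(i) gives $P_\mub(A_\ag)=-A_\ag\Beta-\Beta A_\ag=2\Vert\Beta\Vert^2 A_\ag$, which is manifestly positive. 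The orthogonality of these two pieces under $P_\mub$ uses that $\ad_\mub Y$ is normal for $Y\in\ag$, another solvsoliton-specific fact. None of these ingredients appears in your sketch.
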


\begin{proof}
For $A\in \kg_\Beta$, by $\Or(\sg)$-equivariance of $\mu\mapsto \Riccim_\mu$ we have that 
\[
  \Riccim_{\exp(s A) \cdot \mub} = \exp(s A) \, \Riccim_\mub \, \exp(-s A) = \exp(s A) \Beta \exp(-s A) = \Beta,
\]
thus $P_\mub(\kg_\Beta) = 0$.
Recall also that since $\mub \in \Vnnss$, by \cite[Cor.~4.11]{BL17} we have $\Der(\mub) \subset \slgb$.

It remains to show that on the orthogonal complement of
$\Der(\mub) + \kg_\Beta$ in $\slgb$ 
the map $P_\mub$ is symmetric and positive definite. By Lemma \ref{lem_propertiesPmub} we may argue on $\hg_\Beta$ and $\ug_\Beta$ separately. The formula given in that lemma for $P_\mub |_{\ug_\Beta}$ immediately implies the claim in this case (recall that $\ker \delta_\mub = \Der(\mub)$).

Regarding the restriction to $\hg_\Beta$, using that $P_\mub(\kg_\Beta) = 0$ and the formula from Lemma \ref{lem_propertiesPmub} we need to worry only about the restriction $P_\mub : \pg_\Beta \to \pg_\Beta$, where $\pg_\Beta = \ggo_\Beta \cap \Sym(\sg)$. Using $\sg = \ag \oplus \ngo$, by Proposition \ref{prop_solvsol} $\pg_\beta$ decomposes as
\[
		\pg_\Beta = \Sym(\ag) \oplus \pg_\Beta^\ngo, \qquad \pg_\Beta^\ngo := \pg_\Beta \cap \glg(\ngo).
\]
Let us first see that $P_\mub$ maps these two subspaces onto orthogonal subspaces. For the Killing form term this is clear, since $\kf_\mub \in \Sym(\ag)$. 
We must thus show that for symmetric maps $A_\ag\in \glg(\ag)$ and
 $A_\ngo \in \ggo_\Beta^\ngo$ we have $\la \pi(A_\ag) \mub, \pi(A_\ngo) \mub\ra = 0$.
By linearity we may assume that the rank of $A_\ag$ is one, and that $A_\ag \, e_1 = e_1$ for some vector $e_1 \in \ag$ of norm one. Then,
\begin{align*}
  \left\la \pi(A_\ag) \mub\, , \, \pi(A_\ngo) \mub \, \right\ra =&  \,\, 2 \cdot \left\la  \ad_{\pi(A_\ag)\mub} e_1 \, , \, \ad_{\pi(A_\ngo)\mub} e_1 \right\ra \\
    =& -2 \cdot \left\la \ad_\mub e_1, [A_\ngo, \ad_\mub e_1] \right\ra \\
    =& -2 \tr A_\ngo \, [\ad_\mub e_1, (\ad_\mub e_1)^t],
\end{align*}
and the last expression vanishes since for a solvsoliton $\mub$ we have that $\ad_\mub e_1$ is a normal operator, by \cite[Theorem 4.8]{solvsolitons}. 

Having this at hand, we may prove the statement of the lemma separately for $\Sym(\ag)$ and $\pg_\Beta^\ngo$. On $A_\ngo \in \pg_\Beta^\ngo$ we have that $P_\mub = \unm S\circ \delta_\mub^t \delta_\mub$, and the claim is clear. Finally, for $A_\ag \in \Sym(\ag)$, we may apply Lemma \ref{lem_solvsolmmsol}, (i) to obtain
\[
  P_\mub(A_\ag) = \tfrac{\rm d}{\rm d s}\big|_0 \Riccim_{\exp(s A_\ag) \cdot \mub} = \tfrac{\rm d}{\rm d s}\big|_0 \exp(-s A^t_\ag)  \Riccim_\mu \exp(-s A_\ag) = - A_\ag \Beta - \Beta A_\ag.
\]
Thus, $P_\mub (A_\ag)  = 2 \cdot \Vert \Beta \Vert^2 \cdot A_\ag$, since 
 $\Beta^+|_\ag = 0$, and $A_\ag$ is an eigenvector. 
\end{proof}

 \bibliography{../bib/ramlaf2}
\bibliographystyle{amsalpha}

\end{document}